\DeclareMathOperator{\Ext}{\mathrm{Ext}}
\DeclareMathOperator{\Hom}{\mathrm{Hom}}
\theoremstyle{plain}
\theoremstyle{definition}
\newtheorem{theorem}{Theorem}[section]
\newtheorem{remark}[theorem]{Remark}
\newtheorem{lemma}[theorem]{Lemma}
\newtheorem{definition}[theorem]{Definition}
\newtheorem{example}[theorem]{Example}
\newtheorem{proposition}[theorem]{Proposition}
\newtheorem{corollary}[theorem]{Corollary}
\DeclareMathAlphabet{\mathpzc}{OT1}{pzc}{m}{it}
\newcommand{\testleftlong}{\longleftarrow\!\shortmid}
\def\Bcal{\mathcal{B}}
\def\Tcal{\mathcal{T}}
\def\Zbb{\mathbb{Z}}
\def\ra{\rightarrow}
\def\pr{\prime}
\def\ov{\overline}
\def\un{\underline}
\DeclareMathOperator{\AP}{AP}
\DeclareMathOperator{\Bic}{Bic}
\DeclareMathOperator{\supp}{supp}
\DeclareMathOperator{\op}{op}
\begin{document}

\title{Lattice Properties of Oriented Exchange Graphs and Torsion Classes}
\date{}
\author{Alexander Garver}
\address{Laboratoire de Combinatoire et d'Informatique Math\'ematique,
Universit\'e du Qu\'ebec \`a Montr\'eal, Montr\'eal, QC H3C 3P8, Canada}
\email{alexander.garver@lacim.ca}

\author{Thomas McConville}
\address{Massachusetts Institute of Technology, Cambridge, MA 02139-4307, USA}
\email{thomasmc@mit.edu}

\thanks{The authors were supported by a Research Training Group, RTG grant DMS-1148634.}

\begin{abstract}
The exchange graph of a 2-acyclic quiver is the graph of mutation-equivalent quivers whose edges correspond to mutations. When the quiver admits a nondegenerate Jacobi-finite potential, the exchange graph admits a natural acyclic orientation called the oriented exchange graph, as shown by Br\"ustle and Yang.  The oriented exchange graph is isomorphic to the Hasse diagram of the poset of functorially finite torsion classes of a certain finite dimensional algebra. We prove that lattices of torsion classes are semidistributive lattices, and we use this result to conclude that oriented exchange graphs with finitely many elements are semidistributive lattices.  Furthermore, if the quiver is mutation-equivalent to a type A Dynkin quiver or is an oriented cycle, then the oriented exchange graph is a lattice quotient of a lattice of biclosed subcategories of modules over the cluster-tilted algebra, generalizing Reading's Cambrian lattices in type A. We also apply our results to address a conjecture of Br\"ustle, Dupont, and P\'erotin on the lengths of maximal green sequences.
\end{abstract}

\maketitle

\tableofcontents

\section{Introduction}

The exchange graph defined by a \textbf{2-acyclic} quiver $Q$ (i.e., a quiver with loops or 2-cycles) admits a natural acyclic orientation called the oriented exchange graph of $Q$ (see \cite{by14}). For example, if $Q$ is an orientation of a Dynkin diagram of type $\mathbb{A}$, $\mathbb{D}$, or $\mathbb{E}$, then its oriented exchange graph is a Cambrian lattice of the same type (see \cite{ReadingCamb}).  Maximal directed paths of finite length in the oriented exchange graph, known as \textbf{maximal green sequences} \cite{bdp}, may be used to compute the refined Donaldson-Thomas invariant of $Q$ (see \cite{k12}) introduced by Kontsevich and Soibelman in \cite{KS}.  In the Dynkin case, we may extract combinatorial information about oriented exchange graphs from the Cambrian lattice structure.  The purpose of this work is to uncover similar information about oriented exchange graphs associated to some non-Dynkin quivers of finite type.  We summarize our approach below and distinguish it from other approaches.  Most of the definitions will be given in later sections.

Let $Q$ be a 2-acyclic quiver with $n$ vertices and $\widehat{Q}$ its \textbf{framed quiver}. Let $\overline{Q}$ be a quiver obtained from $\widehat{Q}$ by a finite sequence of mutations. A \textbf{$\textbf{c}$-vector} of $Q$ is a vector whose coordinates describe the direction and multiplicity of arrows connecting some \textbf{mutable vertex} of $\overline{Q}$ to the \textbf{frozen vertices} of $\widehat{Q}$.  The coordinates of a \textbf{c}-vector are known to be either all nonpositive or all nonnegative.  An edge of the exchange graph of $Q$ is oriented in one of two ways depending on whether the entries of the corresponding \textbf{c}-vector are nonpositive or nonnegative; see Section~\ref{Prelim} for a more precise description.

In great generality, it is known that the orientation of an exchange graph associated to a 2-acyclic quiver is known to be acyclic. One approach to proving this fact invokes a surprising connection to representation theory. Suppose $Q$ is 2-acyclic quiver that admits a nondegenerate Jacobi-finite potential (see \cite{dwz1} for more details). Then there is an associative algebra $\Lambda = \Bbbk Q/I$, the \textbf{Jacobian algebra} of $Q$, whose functorially finite torsion classes are in natural bijection with vertices of the oriented exchange graph of $Q$.  Ordering torsion classes by inclusion, the covering relations among functorially finite torsion classes correspond to edges of the exchange graph in such a way that the orientation is preserved. This orientation-preserving bijection was first discovered in \cite{by14}.  Now since an inclusion order on any family of sets is acyclic, it follows that the oriented exchange graph is acyclic.

The set of torsion classes of \textit{any} finite dimensional algebra $\Lambda = \Bbbk Q/I$ is known to form a lattice, and the subposet of functorially finite torsion classes forms a lattice when $\Lambda$ has certain algebraic properties \cite{iyama.reiten.thomas.todorov:latticestrtors} (we will focus on algebras $\Lambda$ having finite lattices of torsion classes so we do not need to distinguish between lattices and complete lattices). We use a formula for the join of two torsion classes shown to us by Hugh Thomas \cite{hthomas} to prove that torsion classes form a semidistributive lattice (see Theorem~\ref{semidis} and the comments around Lemma~\ref{join}). In the situation where every torsion class of $\Lambda$ is functorially finite, which is true if $Q$ is mutation-equivalent to a Dynkin quiver, we conclude that the corresponding oriented exchange graph is semidistributive.

A Cambrian lattice can be constructed either as a special lattice quotient of the weak order of a finite Coxeter group or as an oriented exchange graph of a Dynkin quiver.  We construct a similar lattice quotient description when $Q$ is an oriented cycle or mutation-equivalent to a type $\mathbb{A}$ Dynkin quiver.  First, we define a closure operator on its set of positive \textbf{c}-vectors. We then show that biclosed sets of \textbf{c}-vectors can be interpreted algebraically as what we call biclosed subcategories of the module category of $\Lambda$. After that, we construct a map $\pi_\downarrow$ from biclosed subcategories of $\Lambda$-mod to functorially finite torsion classes of $\Lambda$.

We prove that the set of biclosed sets ordered by inclusion forms a congruence-uniform lattice, which is a stronger property than semidistributivity, and the above map has the structure of a lattice quotient map. The latter implies that the lattice of functorially finite torsion classes is also a congruence-uniform lattice. For any 2-acyclic quiver $Q$ admitting a nondegenerate Jacobi-finite potential, we conjecture that if the functorially finite torsion classes of the Jacobian algebra $\Lambda = \Bbbk Q/I$ form a lattice, then that lattice is congruence-uniform. However, we do not have a proof.

The paper is organized as follows. In Section~\ref{Prelim}, we explain several of the combinatorial and algebraic tools we will use in this paper. We begin by reviewing basic notions such as ice quivers, mutation of ice quivers, exchange graphs of quivers, and oriented exchange graphs of quivers. After that, we review the basic theory of path algebras with relations and their module theory. The class of path algebras with relations that we consider in this paper are cluster-tilted algebras of type $\mathbb{A}$ (i.e., these algebras are defined by quivers that are mutation-equivalent to a type $\mathbb{A}$ Dynkin quiver) and the cluster-tilted algebras defined by a quiver that is an oriented cycle. Cluster-tilted algebras are examples of the Jacobian algebras mentioned earlier. We carefully describe these algebras and their properties in Sections~\ref{clusttilt} and ~\ref{cyclic}. Throughout this paper, when we speak about one of the algebras, we denote it by $\Lambda$. 

In Section~\ref{latticeprops}, we review basic notions related to lattice theory that will be useful to us. Of particular importance to us will be the notions of semidistributive, congruence-uniform, and polygonal lattices.

In Section~\ref{semidisoreg}, we review the concepts of torsion classes and torsion-free classes. Using the fact that the lattice of functorially finite torsion classes of $\Lambda$ is isomorphic to the oriented exchange graph of $Q$, we prove that when $Q$ is mutation-equivalent to a Dynkin quiver its oriented exchange graph is a semidistributive lattice.

In Section~\ref{biclosedsets}, we develop the theory of biclosed sets. We introduce the notion of biclosed sets of acyclic paths in a graph, denoted $\text{Bic}(\text{AP})$. We prove that $\text{Bic}(\text{AP})$ is a semidistributive, congruence-uniform, and polygonal lattice (see Theorem~\ref{bicapprops}). When $Q$ is mutation-equivalent to a path quiver or is an oriented cycle, we can identify the \textbf{c}-vectors of $Q$ with acyclic paths in $Q$. In this way, we can consider the lattice of biclosed sets of \textbf{c}-vectors of $Q$, denoted $\text{Bic}(Q)$, and conclude that this lattice is  semidistributive, congruence-uniform, and polygonal.

In Section~\ref{biclosedsubcat}, we show that $\text{Bic}(Q)$ is isomorphic to what we call the lattice of biclosed subcategories of $\Lambda$-mod, denoted $\mathcal{BIC}(Q)$. Using this categorification, we define maps $\pi_\downarrow$ and $\pi^\uparrow$ on $\mathcal{BIC}(Q)$. Our main theorem is that $\pi_\downarrow: \mathcal{BIC}(Q) \to \text{tors}(\Lambda)$ is a lattice quotient map (see Theorem~\ref{lattquot}). We remark that it is not clear a priori that the image of $\pi_\downarrow$ is contained in $\text{tors}(\Lambda).$ We conclude this section by giving an affirmative answer to a conjecture of Br\"{u}stle, Dupont, and P\'{e}rotin (see \cite[Conjecture 2.22]{bdp}) when $Q$ is mutation-equivalent to a path quiver or is an oriented cycle (see Corollary~\ref{lengths}).

In Section~\ref{aboutpiupanddown}, we prove several important properties of $\pi_\downarrow$ and $\pi^\uparrow$ that are needed for the proof of Theorem~\ref{lattquot}. Much of this section is dedicated to proving that the image of $\pi_\downarrow$ is contained in $\text{tors}(\Lambda)$. A crucial step in this argument is the use of a basis for the equivalence classes of extensions of one indecomposable $\Lambda$-module by another given in \cite{cs14}. 

In Section~\ref{canonical}, we apply our results about biclosed subcategories to classify canonical join- and canonical meet-representations of torsion classes. 

In Section~\ref{additionallemmas}, we record a few necessary results whose statements and proofs do not fit with the exposition in other sections.

In this paper, we only present a lattice quotient description of oriented exchange graphs defined by quivers that are mutation-equivalent to a path quiver or are an oriented cycle. We believe that one needs a more refined notion of biclosed subcategories in order to produce a lattice quotient description of oriented exchange graphs defined by any finite type quiver. 

{\bf Acknowledgements.~}
Alexander Garver thanks Cihan Bahran, Gregg Musiker, Rebecca Patrias, and Hugh Thomas for several helpful conversations. The authors also thank an anonymous referee for carefully commenting on the manuscript.

\section{Preliminaries}\label{Prelim}

\subsection{Quiver mutation}\label{subsec:quivers}
A \textbf{quiver} $Q$ is a directed graph. In other words, $Q$ is a 4-tuple $(Q_0,Q_1,s,t)$, where $Q_0 = [m] := \{1,2, \ldots, m\}$ is a set of \textbf{vertices}, $Q_1$ is a set of \textbf{arrows}, and two functions $s, t:Q_1 \to Q_0$ defined so that for every $\alpha \in Q_1$, we have $s(\alpha) \xrightarrow{\alpha} t(\alpha)$. An \textbf{ice quiver} is a pair $(Q,F)$ with $Q$ a quiver and $F \subset Q_0$ a set of \textbf{frozen vertices} with the additional restriction that any $i,j \in F$ have no arrows of $Q$ connecting them. We refer to the elements of $Q_0\backslash F$ as \textbf{mutable vertices}. By convention, we assume $Q_0\backslash F = [n]$ and $F = [n+1,m] := \{n+1, n+2, \ldots, m\}.$ Any quiver $Q$ can be regarded as an ice quiver by setting $Q = (Q, \emptyset)$.

If a given ice quiver $(Q,F)$ is \textbf{2-acyclic} (i.e., $Q$ has no loops or 2-cycles), we can define a local transformation of $(Q,F)$ called \textbf{mutation}. The {\bf mutation} of an ice quiver $(Q,F)$ at mutable vertex $k$, denoted $\mu_k$, produces a new ice quiver $(\mu_kQ,F)$ by the three step process:

(1) For every $2$-path $i \to k \to j$ in $Q$, adjoin a new arrow $i \to j$.

(2) Reverse the direction of all arrows incident to $k$ in $Q$.

(3) Remove a maximal collection of disjoint 2-cycles in the resulting quiver as well as all of the arrows between two frozen vertices. 

\noindent We show an example of mutation below depicting the mutable (resp., frozen) vertices in black (resp., blue).
\vspace{-.1in}
\[
\begin{array}{c c c c c c c c c}
\raisebox{-.4in}{$(Q,F)$} & \raisebox{-.4in}{=} & {\begin{xy} 0;<1pt,0pt>:<0pt,-1pt>:: 
(0,30) *+{1} ="0",
(40,0) *+{2} ="1",
(80,30) *+{3} ="2",
(40,60) *+{\textcolor{blue}{4}} ="3",
"0", {\ar@<-.5ex>"1"},
"0", {\ar@<.5ex>"1"},
"1", {\ar"2"},
"3", {\ar"2"},
\end{xy}} & \raisebox{-.4in}{$\stackrel{\mu_2}{\longmapsto}$} & {\begin{xy} 0;<1pt,0pt>:<0pt,-1pt>:: 
(0,30) *+{1} ="0",
(40,0) *+{2} ="1",
(80,30) *+{3} ="2",
(40,60) *+{\textcolor{blue}{4}} ="3",
"2", {\ar"1"},
"0", {\ar@<-.5ex>"2"},
"0", {\ar@<.5ex>"2"},
"1", {\ar@<-.5ex>"0"},
"1", {\ar@<.5ex>"0"},
"3", {\ar"2"},
\end{xy}} & \raisebox{-.4in}{=} & \raisebox{-.4in}{$(\mu_2Q,F)$}
\end{array}
\]

The information of an ice quiver can be equivalently described by its (skew-symmetric) \textbf{exchange matrix}. Given $(Q,F),$ we define $B = B_{(Q,F)} = (b_{ij}) \in \mathbb{Z}^{n\times m} := \{n \times m \text{ integer matrices}\}$ by $b_{ij} := \#\{i \stackrel{\alpha}{\to} j \in Q_1\} - \#\{j \stackrel{\alpha}{\to} i \in Q_1\}.$ Furthermore, ice quiver mutation can equivalently be defined  as \textbf{matrix mutation} of the corresponding exchange matrix. Given an exchange matrix $B \in \mathbb{Z}^{n\times m}$, the \textbf{mutation} of $B$ at $k \in [n]$, also denoted $\mu_k$, produces a new exchange matrix $\mu_k(B) = (b^\prime_{ij})$ with entries
\[
b^\prime_{ij} := \left\{\begin{array}{ccl}
-b_{ij} & : & \text{if $i=k$ or $j=k$} \\
b_{ij} + \frac{|b_{ik}|b_{kj}+ b_{ik}|b_{kj}|}{2} & : & \text{otherwise.}
\end{array}\right.
\]
\begin{flushleft}For example, the mutation of the ice quiver above (here $m=4$ and $n=3$) translates into the following matrix mutation. Note that mutation of matrices {(or of ice quivers)} is an involution (i.e., $\mu_k\mu_k(B) = B$). Let Mut($(Q,F)$) denote the collection of ice quivers obtainable from $(Q,F)$ by finitely many mutations.\end{flushleft}
\[
\begin{array}{c c c c c c c c c c}
B_{(Q,F)} & = & \left[\begin{array}{c c c | r}
0 & 2 & 0 & 0 \\
-2 & 0 & 1 & 0\\
0 & -1 & 0 & -1\\
\end{array}\right]
& \stackrel{\mu_2}{\longmapsto} &
\left[\begin{array}{c c c | r}
0 & -2 & 2 & 0 \\
2 & 0 & -1 & 0\\
-2 & 1 & 0 & -1\\
\end{array}\right] 
& = & B_{(\mu_2Q,F)}.
\end{array}
\]

{Given a quiver $Q$, we define its \textbf{framed} (resp., \textbf{coframed}) quiver to be the ice quiver $\widehat{Q}$ (resp., $\widecheck{Q}$) where $\widehat{Q}_0\ (= \widecheck{Q}_0) := Q_0 \sqcup [n+1, 2n]$, $F = [n+1, 2n]$, and $\widehat{Q}_1 := Q_1 \sqcup \{i \to n+i: i \in [n]\}$ (resp., $\widecheck{Q}_1 := Q_1 \sqcup \{n+i \to i: i \in [n]\}$).}  Now given $\widehat{Q}$ we define the \textbf{exchange tree} of $\widehat{Q}$, denoted $ET(\widehat{Q})$, to be the (a priori infinite) graph whose vertex set is Mut$(\widehat{Q})$ with an edge between two vertices if and only if the quivers corresponding to those vertices are obtained from each other by a single mutation. Similarly, define the \textbf{exchange graph} of $\widehat{Q}$, denoted $EG(\widehat{Q})$, to be the quotient of $ET(\widehat{Q})$ where two vertices are identified if and only if there is a \textbf{frozen isomorphism} of the corresponding quivers (i.e., an isomorphism of quivers that fixes the frozen vertices). Such an isomorphism is equivalent to a simultaneous permutation of the rows and first $n$ columns of the corresponding exchange matrices.

In this paper, we focus our attention on \textbf{type $\mathbb{A}$ quivers} (i.e., quivers $R \in \text{Mut}(1 \leftarrow 2 \leftarrow \cdots \leftarrow n)$ for some positive integer $n$). We will use the following classification due to Buan and Vatne in our study of type $\mathbb{A}$ quivers. We write $\underline{Q}$ for the undirected multigraph obtained by forgetting the orientations of the arrows of $Q$. By a \textbf{circuit} of an undirected graph $\un{Q}$ we mean a minimal subset of $Q_1$ that form a cycle in $\un{Q}$.

\begin{lemma}\cite[Proposition 2.4]{bv08}\label{BV}
A quiver $Q$ is of type $\mathbb{A}$ if and only if $Q$ satisfies the following:
\begin{itemize}
     \item[$i)$] any circuit of $\un{Q}$ is a directed 3-cycle of $Q$; 
     \item[$ii)$] any vertex has at most four neighbors; 
     \item[$iii)$] if a vertex has four neighbors, then two of its adjacent arrows belong to one 3-cycle, and the other two belong to another 3-cycle;
     \item[$iv)$] if a vertex has exactly three neighbors, then two of its adjacent arrows belong to a 3-cycle, and the third arrow does not belong to any 3-cycle. 
     \end{itemize} 
\end{lemma}

\subsection{Oriented exchange graphs} In this brief section, we recall the definitions of \textbf{c}-vectors and their sign-coherence property. We use these notions to explain how to orient the edges of $EG(\widehat{Q})$ for a given quiver $Q$ to obtain the \textbf{oriented exchange graph} of $Q$, denoted $\overrightarrow{EG}(\widehat{Q})$. Oriented exchange graphs were introduced in \cite{bdp} and were shown to be isomorphic to many important partially-ordered sets in representation theory in \cite{by14}.

Given ${Q}$, we define the \textbf{c}-\textbf{matrix} ${C} = {C}_R$ (resp., $\overline{C} = \overline{C}_R$) of $R \in ET(\widehat{Q})$ (resp., $R \in EG(\widehat{Q})$)  to be the submatrix of $B_R$ where $C := (b_{ij})_{i \in [n], j \in [n+1, 2n]}$ (resp., $\overline{C} := (b_{ij})_{i \in [n], j \in [n+1,2n]}$). We let \textbf{c}-mat($Q$) $:= \{\overline{C}_R: R \in EG(\widehat{Q})\}$. By definition, $B_R$ (resp., $\overline{C}$) is only defined up to simultaneous permutations of its rows and its first $n$ columns (resp., up to permutations of its rows) for any $R \in EG(\widehat{Q})$.

A row vector of a \textbf{c}-matrix, $\textbf{c}_i$, is known as a \textbf{c}-\textbf{vector}. We will denote the set of \textbf{c}-vectors of $Q$ by \textbf{c}-vec($Q$). The celebrated theorem of Derksen, Weyman, and Zelevinsky \cite[Theorem 1.7]{dwz10}, known as the {sign-coherence} of $\textbf{c}$-vectors, states that for any $R \in ET(\widehat{Q})$ and any $i \in [n]$ the \textbf{c}-vector $\textbf{c}_i$ is a nonzero element of $\mathbb{Z}_{\ge 0}^n$ or $\mathbb{Z}_{\le0}^n$. We say a \textbf{c}-vector is \textbf{positive} in the former case and \textbf{negative} in the latter case. A mutable vertex $i$ of an ice quiver $(R,F) \in \text{Mut}(\widehat{Q})$ is said to be {\bf green} (resp., {\bf red}) if all arrows of $(R,F)$ connecting an element of $F$ and $i$ point away from (resp., towards) $i$.  Note that all vertices of $\widehat{Q}$ are green and all vertices of $\widecheck{Q}$ are red. We use the notion of green and red vertices to orient the edges of $EG(\widehat{Q})$ to obtain the oriented exchange graph of $Q$.

\begin{definition}\cite{bdp}
Let $Q$ be a quiver. The \textbf{oriented exchange graph} of $Q$, denoted $\overrightarrow{EG}(\widehat{Q})$, is the directed graph whose underlying unoriented graph is $EG(\widehat{Q})$ with its edges oriented as follows. If $(R^1,F)$ and $(R^2,F)$ are connected by an edge in $EG(\widehat{Q})$, then there is a directed edge $(R^1,F) \longrightarrow (R^2,F)$ if $R^2 = \mu_kR^1$ where $k \in R^1_0$ is green, otherwise there is a directed edge $(R^1, F) \longleftarrow (R^2,F)$. 

We define a \textbf{maximal green sequence} of $Q$, denoted $\textbf{i} = (i_1,\ldots, i_k)$, to be a finite sequence of mutable vertices of $\widehat{Q}$ where

$\begin{array}{rll}
a) & i_j \text{ is green in $\mu_{i_{j-1}}\circ \cdots \circ \mu_{i_1}(\widehat{Q})$ for each } j \in [k], \text{ and} \\
b) & \mu_{i_{k}}\circ \cdots \circ \mu_{i_1}(\widehat{Q}) \text{ has only red vertices.}
\end{array}$

Let $\text{green}(Q)$ denote the set of maximal green sequences of $Q$. By definition, maximal green sequences of $Q$ are in bijection with the finite length maximal directed paths in $\overrightarrow{EG}(\widehat{Q})$. Additionally, we define $\ell en(\textbf{i})= k$ to be the \textbf{length} of maximal green sequence $\textbf{i} = (i_1,\ldots, i_k)$.
\end{definition}

\begin{example} Let $Q = 1\longrightarrow 2$. Below we show $\overrightarrow{EG}(\widehat{Q})$ and all of the \textbf{c}-matrices in $\textbf{c}\text{-mat(}Q\text{)}.$ Additionally, we note that $\textbf{i}_1 = (1,2)$ and $\textbf{i}_2 = (2,1,2)$ are the two maximal green sequences of $Q$.
\begin{figure}[h]
$$\begin{array}{rccccl}
\raisebox{.85in}{$\overrightarrow{EG}(\widehat{Q})$} & \raisebox{.85in}{=} &\includegraphics[scale=1]{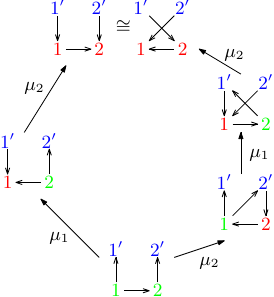} & \raisebox{.85in}{=} & \includegraphics[scale=1]{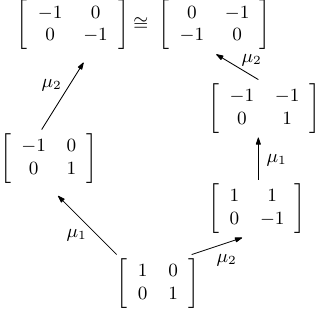}
\end{array}$$
\caption{The oriented exchange graph of $Q=1\rightarrow 2$.}
\label{oregA2}
\end{figure}
\end{example}

\subsection{Path algebras and quiver representations} Following \cite{ass06}, let $Q$ be a given quiver. We define a \textbf{path} of \textbf{length} $\ell \ge 1$ to be an expression $\alpha_1\alpha_2\cdots\alpha_\ell$ where $\alpha_i \in Q_1$ for all $i \in [\ell]$ and $s(\alpha_i) = t(\alpha_{i+1})$ for all $i \in [\ell-1]$. We may visualize such a path in the following way $$\begin{xy} 0;<1pt,0pt>:<0pt,-1pt>:: 
(0,0) *+{\cdot} ="0",
(25,0) *+{\cdot} ="1",
(50,0) *+{\cdot} ="2",
(75,0) *+{\cdot} ="3",
(100,0) *+{\cdots} ="4",
(125,0) *+{\cdot} ="5",
(150,0) *+{\cdot} ="6",
(175,0) *+{\cdot} ="7",
"1", {\ar_{\alpha_1}"0"},
"2", {\ar_{\alpha_2}"1"},
"3", {\ar"2"},
"6", {\ar"5"},
"7", {\ar_{\alpha_\ell}"6"},
\end{xy}.$$ Furthermore, the \textbf{source} (resp., \textbf{target}) of the path $\alpha_1\alpha_2\cdots\alpha_\ell$ is $s(\alpha_\ell)$ (resp., $t(\alpha_1)$). Let $Q_\ell$ denote the set of all paths in $Q$ of length $\ell$. We also associate to each vertex $i \in Q_0$ a path of length $\ell = 0$, denoted $\varepsilon_i$, that we will refer to as the \textbf{lazy path} at $i$. 

\begin{definition}
The \textbf{path algebra} of $Q$ is the $\Bbbk$-algebra generated as a vector space by all paths of length $\ell \ge 0$. Throughout this paper, we assume that $\Bbbk$ is algebraically closed. The multiplication of two paths is induced by `concatenation of paths'. We refer the reader to \cite[Chapter II]{ass06} for more details. We will denote the path algebra of $Q$ by $\Bbbk Q$. Note also that as $\Bbbk$-vector spaces we have $$\Bbbk Q = \bigoplus_{\ell = 0}^\infty \Bbbk Q_\ell$$ where $\Bbbk Q_\ell$ is the $\Bbbk$-vector space of all paths of length $\ell$.
\end{definition}

In this paper, we study certain quivers $Q$ which have \textbf{oriented cycles}. We say a path of length $\ell \ge 0$ $\alpha_1 \cdots \alpha_\ell \in Q_\ell$ is an \textbf{oriented cycle} if $t(\alpha_1) = s(\alpha_\ell)$. If a quiver $Q$ possesses any oriented cycles of length $\ell \ge 1$, we see that $\Bbbk Q$ is infinite dimensional. If $Q$ has no oriented cycles of positive length, we say that $Q$ is \textbf{acyclic}.

In order to avoid studying infinite dimensional algebras, we will add relations to path algebras whose quivers contain oriented cycles in such a way that we obtain finite dimensional quotients of path algebras. The relations we add are those coming from an \textbf{admissible} ideal $I$ of $\Bbbk Q$ meaning that there exists $r \ge 2$ such that $$\bigoplus_{\ell = r}^\infty \Bbbk Q_\ell \subset I \subset \bigoplus_{\ell = 2}^\infty \Bbbk Q_\ell.$$ If $I$ is an admissible ideal of $\Bbbk Q$, we say that $(Q,I)$ is a \textbf{bound quiver} and that $\Bbbk Q/I$ is a \textbf{bound quiver algebra}.

In this paper, we study modules over a bound quiver algebra $\Bbbk Q/I$ by studying certain quiver representations of $Q$ that are ``compatible" with the relations coming from $I$. A \textbf{representation} $V = ((V_i)_{i \in Q_0}, (\varphi_\alpha)_{\alpha \in Q_1})$ of a quiver $Q$  is an assignment of a $\Bbbk$-vector space $V_i$ to each vertex $i$ and a $\Bbbk$-linear map $\varphi_\alpha: V_{s(\alpha)} \rightarrow V_{t(\alpha)}$ to each arrow $\alpha \in Q_1$. If $\rho \in \Bbbk Q$, it can be expressed as $$\rho = \sum_{i = 1}^m c_i\alpha^{(i)}_{1}\cdots \alpha^{(i)}_{k_i}$$ where $c_i \in \Bbbk$ and $\alpha^{(i)}_{1}\cdots \alpha^{(i)}_{k_i}\in Q_{k_i}$ so when considering a representation $V$ of $Q$, we define $$\varphi_\rho := \sum_{i = 1}^m c_i\varphi_{\alpha^{(i)}_{1}}\cdots \varphi_{\alpha^{(i)}_{k_i}}.$$  If we have a bound quiver $(Q,I)$, we define a representation of $Q$ \textbf{bound by} $I$ to be a representation of $Q$ where $\varphi_\rho = 0$ for any $\rho \in I.$ We say a representation of $Q$ bound by $I$ is \textbf{finite dimensional} if $\dim_\Bbbk V_i < \infty$ for all $i \in Q_0.$ It turns out that $\Bbbk Q/I$-mod is equivalent to the category of finite dimensional representations of $Q$ bound by $I$. In the sequel, we use this fact without mentioning it further. Additionally, the \textbf{dimension vector} of $V \in \Bbbk Q/I$-mod is the vector $\underline{\dim}(V):=(\dim_\Bbbk V_i)_{i\in Q_0}$ and the \textbf{dimension} of $V$ is defined as $\dim_\Bbbk(V) = \sum_{i \in Q_0} \dim_\Bbbk V_i$. The \textbf{support} of $V \in \Bbbk Q/I$-mod is the set $\text{supp}(V) := \{i\in Q_0 : V_i \neq 0\}$.

\subsection{Cluster-tilted algebras and \textbf{c}-vectors}\label{clusttilt}

In this section, we review the definition of cluster-tilted algebras \cite{bmr1} and their connections with \textbf{c}-vectors \cite{chavez}. As we will focus on cluster-tilted algebras of type $\mathbb{A}$, we recall a description of these algebras as bound quiver algebras and a useful classification of the indecomposable modules over these algebras.

To define cluster-tilted algebras, we need to recall the definition of the cluster category of an acyclic quiver $Q$, which was introduced in \cite{bmrrt}. Let $Q$ be an acyclic quiver.  Let $\mathcal{D} := \mathcal{D}^b(\Bbbk Q\text{-mod})$ denote the bounded derived category of $\Bbbk Q$-mod. Let $\tau : \mathcal{D} \to \mathcal{D}$ denote the \textbf{Auslander-Reiten translation}, and let $[1]: \mathcal{D} \to \mathcal{D}$ denote the shift functor. We define the \textbf{cluster category} of $Q$, denoted $\mathcal{C}_Q$, to be the orbit category $\mathcal{D}/\tau^{-1}[1].$ The objects of $\mathcal{C}_Q$ are $\tau^{-1}[1]$-orbits of objects in $\mathcal{D}$, denoted $\overline{M} := ((\tau^{-1}[1])^i M)_{i \in \mathbb{Z}}$, where $M \in \mathcal{D}$. The morphisms between $\overline{M}, \overline{N} \in \mathcal{C}_Q$ are given by $$\text{Hom}_{\mathcal{C}_Q}\left(\overline{M},\overline{N}\right) := \bigoplus_{i \in \mathbb{Z}}\text{Hom}_{\mathcal{D}}(M, (\tau^{-1}[1])^iN).$$ 

Cluster categories were invented to provide an additive categorification of cluster algebras. We will not discuss cluster algebras in this paper, but we remark that \textbf{cluster-tilting objects} in $\mathcal{C}_Q$, which we now define, are in bijection with the clusters of the cluster algebra $\mathcal{A}_Q$ associated to $Q$.

\begin{definition}(\cite{bmrrt})
We say $T \in \mathcal{C}_Q$ is a \textbf{cluster-tilting object} if\\
(1) $\text{Ext}_{\mathcal{C}_Q}^1(T,T) = 0$ and \\
(2) $T = \bigoplus_{i = 1}^nT_i$ where $\{T_i\}_{i=1}^n$ is a maximal collection of pairwise non-isomorphic indecomposable objects in $\mathcal{C}_Q$.
\end{definition}

Now we define a \textbf{cluster-tilted algebra} to be the endomorphism algebra $\Lambda := \text{End}_{\mathcal{C}_Q}(T)^{\text{op}}$ where $T = \bigoplus_{i=1}^nT_i$ is a cluster-tilting object in $\mathcal{C}_Q$. If $Q$ is a \textbf{Dynkin quiver} (i.e., the underlying graph of $Q$ is a Dynkin graph $\Delta \in \{\mathbb{A}_n, \mathbb{D}_m, \mathbb{E}_6, \mathbb{E}_7, \mathbb{E}_8\}$ with $n \ge 1$ and $m \ge 4$), we say that $\Lambda$ is of \textbf{type} $\Delta$ or of \textbf{Dynkin type}. It follows from \cite[Corollary 2.3]{bmr1} that $\Lambda$ is representation-finite if and only if $\Bbbk Q$ is representation-finite. Thus a cluster-tilted algebra $\Lambda$ is representation-finite if and only if $Q$ is of Dynkin type.

Cluster-tilted algebras of Dynkin type can be described explicitly as bound quiver algebras (see \cite{bmr2}). In the sequel, we use the following description of cluster-tilted algebras of type $\mathbb{A}$ as bound quiver algebras. The following result appeared in \cite{ccs1} and was generalized in \cite{ccs2} and \cite{bmr2}.

\begin{lemma}\label{typeA}
A cluster-tilted algebra $\Lambda$ is of type $\mathbb{A}$ if and only if $\Lambda \cong \Bbbk Q/I$ where $Q$ is a type $\mathbb{A}$ quiver and $I$ is generated by all 2-paths $\alpha\beta \in Q_2$ where $\alpha$ and $\beta$ are two of the arrows of a 3-cycle of $Q$.
\end{lemma}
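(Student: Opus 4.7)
The plan is to use the geometric model of the cluster category $\Ccal_Q$ for $Q$ of Dynkin type $\mathbb{A}_n$ due to Caldero-Chapoton-Schiffler. Under this model, indecomposable objects of $\Ccal_Q$ correspond bijectively to diagonals of a convex $(n+3)$-gon $P_n$, basic cluster-tilting objects correspond to triangulations of $P_n$, and $\Hom$-spaces admit an explicit combinatorial description. Both directions of the lemma will be derived from this dictionary.

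For the forward implication, fix a cluster-tilting object $T = \bigoplus_{i=1}^n T_i$ with associated triangulation $\mathcal{T}$ of $P_n$. The quiver $Q_{\mathcal{T}}$ of $\Lambda = \text{End}_{\Ccal_Q}(T)^{\text{op}}$ has vertices indexed by the diagonals of $\mathcal{T}$, with an arrow $T_i \to T_j$ whenever $T_i$ and $T_j$ share a common endpoint and bound a common triangle of $\mathcal{T}$, the orientation being determined by the clockwise cyclic order at the shared vertex. A local case analysis on the triangles meeting each vertex verifies that $Q_{\mathcal{T}}$ satisfies the four conditions of Lemma~\ref{BV}, so $Q_{\mathcal{T}}$ is of type $\mathbb{A}$. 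Moreover, the 3-cycles in $Q_{\mathcal{T}}$ correspond exactly to interior triangles of $\mathcal{T}$, namely those with no side on the boundary of $P_n$.

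Next I would compute the relations. For each interior triangle $\Delta$ with sides $T_i, T_j, T_k$ producing a 3-cycle $T_i \xrightarrow{\alpha} T_j \xrightarrow{\beta} T_k \xrightarrow{\gamma} T_i$ in $Q_{\mathcal{T}}$, each arrow is realized as an irreducible morphism rotating one diagonal to an adjacent diagonal around the shared endpoint. A direct computation in $\Ccal_Q$ shows that the composition of any two consecutive arrows around $\Delta$ vanishes, placing the 2-paths $\beta\alpha,\gamma\beta,\alpha\gamma$ into the relation ideal. The opposite inclusion (no further relations) then follows from a dimension count: one compares $\dim_{\Bbbk} \Hom_{\Ccal_Q}(T_i, T_j)$, computable from the CCS crossing-number formula, against the number of paths from $i$ to $j$ in $\Bbbk Q_{\mathcal{T}} / I$, where $I$ denotes the proposed ideal generated by those 2-paths.

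For the reverse implication, any type $\mathbb{A}$ quiver $Q'$ in the sense of Lemma~\ref{BV} can be realized as $Q_{\mathcal{T}}$ for some triangulation $\mathcal{T}$ of a polygon by induction on the number of vertices, repeatedly removing a boundary diagonal corresponding to a vertex of low degree and handling the branching dictated by conditions $(iii)$ and $(iv)$ of Lemma~\ref{BV}. Applying the forward direction to this $\mathcal{T}$ then yields the desired presentation. The principal obstacle throughout is the dimension count establishing that no relations beyond the stated 2-paths are needed: this requires combining the CCS description of morphism spaces with a combinatorial bijection between nonzero paths in $\Bbbk Q_{\mathcal{T}} / I$ and certain configurations of diagonals, and must be handled carefully when the quiver contains several adjacent 3-cycles sharing arrows, where naively composing long paths might appear to give further vanishing relations that in fact already follow from the stated ones.
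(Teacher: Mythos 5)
The paper does not actually prove this lemma: it is quoted from the literature, with \cite{ccs1} (Caldero--Chapoton--Schiffler), \cite{ccs2}, and \cite{bmr2} cited as the sources. Your strategy --- the polygon/triangulation model of $\mathcal{C}_Q$ in type $\mathbb{A}$, reading off the quiver from the triangles of a triangulation, and getting the relations from the interior triangles --- is exactly the route taken in those sources, so in spirit you are reproducing the cited proof rather than finding a new one.

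As a proof, however, what you have written has a genuine gap at precisely the point you flag as the ``principal obstacle.'' First, the dimension count you propose is not set up correctly: the CCS crossing-number formula computes $\dim_\Bbbk\Ext^1_{\mathcal{C}_Q}(M_a,M_b)$ (crossings of the diagonals $a,b$), not $\dim_\Bbbk\Hom_{\mathcal{C}_Q}(T_i,T_j)$; in fact all diagonals of the triangulation are pairwise noncrossing, so that formula returns $0$ for every pair. What you actually need is the CCS description of morphisms between diagonals via sequences of elementary moves (pivots around a common endpoint avoiding the crossed diagonals of $\mathcal{T}$), and then a verified bijection between such pivoting paths and the nonzero paths in $\Bbbk Q_{\mathcal{T}}/I$; this is the heart of the argument and is left entirely unproved in your sketch (as is the vanishing of the compositions around an interior triangle, which you assert as ``a direct computation''). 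Second, the converse direction (every quiver satisfying Lemma~\ref{BV} arises from a triangulation) is also only asserted by an unexecuted induction. A cleaner way to close the ``no further relations'' gap, if you want to avoid the path-counting, is to invoke the theorem of Buan--Marsh--Reiten \cite{bmr2} that a cluster-tilted algebra of Dynkin type is determined by its quiver, with the relations given by a minimal set of partial derivatives of the potential --- which in type $\mathbb{A}$ are exactly the $2$-paths lying on $3$-cycles; but either way, that step must be supplied, since it is the entire content of the lemma.
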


Using Lemma~\ref{typeA} and the language of string modules, we can explicitly parameterize the indecomposable modules of a type $\mathbb{A}$ cluster-tilted algebra. A \textbf{string algebra} $\Lambda = \Bbbk Q/I$ is a bound quiver algebra where: 

\begin{itemize}
\item[i)] for each vertex $i$ of $Q$ at most two arrows of $Q$ start at $i$ and at most two arrows of $Q$ end at $i$;
\item[ii)] for each arrow $\beta \in Q_1$ there is at most one arrow $\alpha \in Q_1$ and at most one arrow $\gamma \in Q_1$ such that $\alpha\beta \not \in I$ and $\beta\gamma \not \in I$;
\item[iii)] the ideal is generated by paths of length at least two.
\end{itemize}

\noindent A \textbf{string} in $\Lambda$ is a sequence $$w = x_1 \stackrel{\alpha_1}{\longleftrightarrow} x_2 \stackrel{\alpha_2}{\longleftrightarrow} \cdots \stackrel{\alpha_{m}}{\longleftrightarrow} x_{m+1} $$
where $x_i \in Q_0$ for all $i \in [m+1]$, $\alpha_i \in Q_1$ for all $i \in [m]$, each $\alpha_i$ \textbf{connects} $x_i$ and $x_{i+1}$ (i.e., either $s(\alpha_i) = x_i$ and $t(\alpha_i) = x_{i+1}$ or $s(\alpha_i) = x_{i+1}$ and $t(\alpha_i) = x_i$), and $w$ contains no \textbf{substrings} of $w$ of the following forms:

\begin{itemize}
\item[i)] $x \stackrel{\beta}{\longrightarrow} y \stackrel{\beta}{\longleftarrow} x \text{ or } x \stackrel{\beta}{\longleftarrow} y \stackrel{\beta}{\longrightarrow} x$, and 
\item[ii)] $x_{i_1} \stackrel{\beta_1}{\longrightarrow} x_{i_2} \cdots x_{i_{s}} \stackrel{\beta_{s}}{\longrightarrow} x_{i_{s+1}}$ or  $x_{i_1} \stackrel{\gamma_1}{\longleftarrow} x_{i_2} \cdots x_{i_{s}} \stackrel{\gamma_{s}}{\longleftarrow} x_{i_{s+1}}$ where $\beta_s \cdots \beta_1, \gamma_1\cdots \gamma_s \in I$.
\end{itemize}

\noindent In other words, $w$ is an irredundant walk in $Q$ that avoids the relations imposed by $I$. We can translate $w$ into a word $\alpha_1^{\epsilon_1}\cdots \alpha_m^{\epsilon_m}$ with $\epsilon_i \in \{\pm1\}$ for all $i \in [m]$ where $\epsilon_i = 1$ (resp., $\epsilon_i = -1$) if $x_i \stackrel{\alpha_i}{\longleftrightarrow} x_{i+1} = x_i \stackrel{\alpha_i}{\longrightarrow} x_{i+1}$ (resp., $x_i \stackrel{\alpha_i}{\longleftrightarrow} x_{i+1} = x_i \stackrel{\alpha_i}{\longleftarrow} x_{i+1}$). Using this notation, as in \cite{cs14}, we consider strings up to inverses.

Let $w$ be a string in $\Lambda$. The \textbf{string module} defined by $w$ is the bound quiver representation $ M(w) := ((V_i)_{i \in Q_0}, (\varphi_\alpha)_{\alpha\in Q_1})$ with
$$\begin{array}{cccccccccccc}
V_i & := & \left\{\begin{array}{lcl} \displaystyle\bigoplus_{j: x_j = i}\Bbbk x_j &: & \text{if } i = x_j \text{ for some } j \in [m+1]\\ 0 & : & \text{otherwise} \end{array}\right.
\end{array}$$ for each $i \in Q_0$ and 
$$\begin{array}{cccccccccccc}
\varphi_\alpha(x_k) & := & \left\{\begin{array}{lcl} x_{k-1} &: & \text{if } \alpha = \alpha_{k-1} \text{ and } \epsilon_k = -1\\ x_{k+1} &: & \text{if } \alpha = \alpha_{k} \text{ and } \epsilon_k = 1\\  0 & : & \text{otherwise} \end{array}\right.
\end{array}$$for each $\alpha \in Q_0$. One observes that $M(w)\cong M(w^{-1})$.

If $\Bbbk Q/I$ is a representation-finite string algebra, it follows from \cite{wald1985tame} that the set of isomorphism classes of indecomposable $\Bbbk Q/I$-modules, denoted $\text{ind}(\Bbbk Q/I\text{-mod})$, consists of exactly the string modules over $\Bbbk Q/I$. Furthermore, if $M(w) = ((V_i)_{i \in Q_0}, (\varphi_\alpha)_{\alpha\in Q_1})$ is a string module over $\Bbbk Q/I$, a cluster-tilted algebra of type $\mathbb{A}$, then the relations in $\Bbbk Q/I$ require that $\dim_\Bbbk V_i \le 1$ for all $i \in Q_0.$

\begin{example}\label{stringsA3}
Let $Q$ denote the type $\mathbb{A}$ quiver shown below and $I = \langle \beta\alpha, \gamma\beta, \alpha\gamma\rangle$. Then $\Bbbk Q/I = \Bbbk Q/\langle \beta\alpha, \gamma\beta, \alpha\gamma \rangle$. $$\begin{array}{ccccccccccccccc}
\raisebox{-.4in}{$Q$} & \raisebox{-.4in}{=} & \raisebox{-.2in}{$\begin{xy} 0;<1pt,0pt>:<0pt,-1pt>:: 
(0,20) *+{1} ="0",
(20,0) *+{2} ="1",
(40,20) *+{3} ="2",
"0", {\ar^{\alpha}"1"},
"1", {\ar^{\beta}"2"},
"2", {\ar^{\gamma}"0"},
\end{xy}$}  \end{array}$$

\noindent The algebra $\Bbbk Q/I$ has the following indecomposable (string) modules. $$\begin{array}{rcccrcccrccccccc}
\raisebox{-.4in}{$M(1)$} & \raisebox{-.4in}{=} & \raisebox{-.2in}{$\begin{xy} 0;<1pt,0pt>:<0pt,-1pt>:: 
(0,20) *+{\Bbbk} ="0",
(20,0) *+{0} ="1",
(40,20) *+{0} ="2",
"0", {\ar^{0}"1"},
"1", {\ar^{0}"2"},
"2", {\ar^{0}"0"},
\end{xy}$} & & \raisebox{-.4in}{$M(2)$} & \raisebox{-.4in}{=} & \raisebox{-.2in}{$\begin{xy} 0;<1pt,0pt>:<0pt,-1pt>:: 
(0,20) *+{0} ="0",
(20,0) *+{\Bbbk} ="1",
(40,20) *+{0} ="2",
"0", {\ar^{0}"1"},
"1", {\ar^{0}"2"},
"2", {\ar^{0}"0"},
\end{xy}$} & & \raisebox{-.4in}{$M(3)$} & \raisebox{-.4in}{=} & \raisebox{-.2in}{$\begin{xy} 0;<1pt,0pt>:<0pt,-1pt>:: 
(0,20) *+{0} ="0",
(20,0) *+{0} ="1",
(40,20) *+{\Bbbk} ="2",
"0", {\ar^{0}"1"},
"1", {\ar^{0}"2"},
"2", {\ar^{0}"0"},
\end{xy}$}\\
\raisebox{-.4in}{$M(1 \stackrel{\alpha}{\longrightarrow} 2)$} & \raisebox{-.4in}{=} & \raisebox{-.2in}{$\begin{xy} 0;<1pt,0pt>:<0pt,-1pt>:: 
(0,20) *+{\Bbbk} ="0",
(20,0) *+{\Bbbk} ="1",
(40,20) *+{0} ="2",
"0", {\ar^{1}"1"},
"1", {\ar^{0}"2"},
"2", {\ar^{0}"0"},
\end{xy}$} & & \raisebox{-.4in}{$M(2 \stackrel{\beta}{\longrightarrow} 3)$} & \raisebox{-.4in}{=} & \raisebox{-.2in}{$\begin{xy} 0;<1pt,0pt>:<0pt,-1pt>:: 
(0,20) *+{0} ="0",
(20,0) *+{\Bbbk} ="1",
(40,20) *+{\Bbbk} ="2",
"0", {\ar^{0}"1"},
"1", {\ar^{1}"2"},
"2", {\ar^{0}"0"},
\end{xy}$} & & \raisebox{-.4in}{$M(3 \stackrel{\gamma}{\longrightarrow} 1)$} & \raisebox{-.4in}{=} & \raisebox{-.2in}{$\begin{xy} 0;<1pt,0pt>:<0pt,-1pt>:: 
(0,20) *+{\Bbbk} ="0",
(20,0) *+{0} ="1",
(40,20) *+{\Bbbk} ="2",
"0", {\ar^{0}"1"},
"1", {\ar^{0}"2"},
"2", {\ar^{1}"0"},
\end{xy}$}
\end{array}$$
\end{example}

The final result that we present in this section allows us to connect the representation theory of cluster-tilted algebras $\Lambda = \Bbbk Q/I$ of finite representation type with the combinatorics of the \textbf{c}-vectors of $Q$. 

\begin{proposition}\cite[Theorem 6]{chavez}\label{c-vecbij}
Let $Q$ be a quiver that is mutation-equivalent to a Dynkin quiver and let $\Lambda \cong \Bbbk Q/I$ denote the cluster-tilted algebra associated to $Q$. Then we have a bijection $$\begin{array}{rcl}
\text{ind}(\Bbbk Q/I\text{-mod}) & \longrightarrow & \{\textbf{c} \in \textbf{c}\text{-vec(}Q\text{)}: \ \textbf{c} \text{ is positive} \}\\
V & \longmapsto & \underline{\dim}(V).
\end{array}$$
\end{proposition}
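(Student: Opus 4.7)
The plan is to prove this via the categorification of cluster algebras by cluster categories. Let $Q'$ be a Dynkin quiver mutation-equivalent to $Q$, so that $\Lambda \cong \mathrm{End}_{\Ccal_{Q'}}(T)^{\op}$ for a cluster-tilting object $T = \bigoplus_{i=1}^n T_i$ of $\Ccal_{Q'}$. I would construct both sides of the claimed bijection from $\mathrm{ind}(\Ccal_{Q'})$ and then verify that the map $V \mapsto \underline{\dim}(V)$ realizes it.

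On the module side, I would invoke the Buan-Marsh-Reiten equivalence
\[ F := \Hom_{\Ccal_{Q'}}(T,-) \colon \Ccal_{Q'}/\mathrm{add}(T[1]) \xrightarrow{\sim} \Lambda\text{-mod}, \]
which restricts to a bijection between $\mathrm{ind}(\Ccal_{Q'}) \setminus \mathrm{add}(T[1])$ and $\mathrm{ind}(\Lambda\text{-mod})$. Since $Q'$ is Dynkin, $\mathrm{ind}(\Ccal_{Q'})$ is in bijection with almost positive roots, so after removing the $n$ summands of $T[1]$, the left-hand set has size $|\Phi_+|$, the number of positive roots of $Q'$. On the c-vector side, sign-coherence combined with the classical fact that in Dynkin type the positive c-vectors realize exactly the positive real roots gives the same count, so a bijection exists on cardinality grounds.

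The crucial step is identifying the map explicitly. For $X \in \mathrm{ind}(\Ccal_{Q'}) \setminus \mathrm{add}(T[1])$, set $M = F(X)$, and take a minimal right $\mathrm{add}(T)$-approximation triangle $T^1 \to T^0 \to X \to T^1[1]$ in $\Ccal_{Q'}$. Applying $F$ yields a minimal projective presentation $FT^1 \to FT^0 \to M \to 0$ of $M$ over $\Lambda$. The indices of the summands of $T^0$ and $T^1$ are by definition the $g$-vector of $X$ at the seed corresponding to $T$, and by Nakanishi-Zelevinsky tropical duality this $g$-vector transposes to the positive c-vector labeling the corresponding edge of $\overrightarrow{EG}(\widehat{Q})$. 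Separately, $\underline{\dim}_\Lambda M$ can be read off from the same projective presentation; using that cluster-tilted algebras of Dynkin type are Gorenstein of dimension at most $1$ (Keller-Reiten), this dimension vector matches the tropically-dual c-vector.

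The main obstacle is this last compatibility: relating the dimension vector obtained from the minimal projective presentation over $\Lambda$ to the $g$-vector of $X$ and then, via tropical duality, to the c-vector. This is precisely where the Dynkin hypothesis enters essentially, through the Gorenstein-$1$ property and the absence of imaginary roots; without these, cancellations in passing from the presentation of $M$ to $\underline{\dim}_\Lambda M$ would not align with the tropical transpose of the $g$-vector.
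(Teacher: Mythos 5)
First, a point of comparison: the paper offers no proof of this proposition at all --- it is imported verbatim from \cite{chavez} (Thm.~6) and used as a black box --- so the only question is whether your independent argument is sound, and as written it has genuine gaps. The most basic one is the counting step. The ``classical fact that in Dynkin type the positive c-vectors realize exactly the positive real roots'' is a statement about an \emph{acyclic} initial seed (and even there it is Ch\'avez's theorem rather than folklore); for the non-acyclic quivers $Q$ that the proposition is really about it is false in the form you use it. For the $3$-cycle $Q(3)$ the paper's own example lists the positive c-vectors as $(1,0,0),(0,1,0),(0,0,1),(1,1,0),(0,1,1),(1,0,1)$, which is not the set of positive roots of $\mathbb{A}_3$ in simple-root coordinates (it contains $(1,0,1)$ and omits $(1,1,1)$); only the cardinality happens to agree, and knowing that cardinality for the seed $\widehat{Q}$ is essentially part of what must be proved. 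Moreover, even with equal finite cardinalities, ``a bijection exists on cardinality grounds'' says nothing about the specific map $V\mapsto\underline{\dim}(V)$: you would still need injectivity (non-isomorphic indecomposable $\Lambda$-modules have distinct dimension vectors, a nontrivial fact for cluster-tilted algebras) or surjectivity, and neither is addressed.

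The central identification step is also not a valid deduction as stated. Nakanishi--Zelevinsky tropical duality is a matrix identity relating the $G$- and $C$-matrices attached to one and the same seed; it does not assign to the index ($g$-vector) of an individual indecomposable $X\in\mathcal{C}_{Q'}$ a positive c-vector of $Q$, and an arbitrary indecomposable is a summand of many cluster-tilting objects, so there is no single ``corresponding edge'' of $\overrightarrow{EG}(\widehat{Q})$ whose c-vector it could transpose to. Likewise $\underline{\dim}\,F(X)$ cannot be ``read off'' from the indices of $T^0$ and $T^1$ alone: it also depends on the dimension vectors of the indecomposable projectives $FT^0, FT^1$ and on the kernel of the presentation map, and the Gorenstein-dimension-$\leq 1$ property of Keller--Reiten supplies no identity matching this with a tropical transpose. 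To make a cluster-category proof work you would need an actual representation-theoretic interpretation of the $C$-matrix recursion --- e.g.\ an induction along $\overrightarrow{EG}(\widehat{Q})$ showing that the rows of $C_{T'}$ are, up to sign-coherent signs, the dimension vectors of the $\Lambda$-modules $\Hom_{\mathcal{C}_{Q'}}(T,-)$ attached to the summands of the varying cluster-tilting object, compatibly with mutation --- which is the substance of Ch\'avez's argument and is precisely what your sketch leaves unproved.
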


As every \textbf{c}-vector is either positive or negative, we may deduce that the set of all \textbf{c}-vectors of $Q$ is
$$\textbf{c}\text{-vec(}Q\text{)} = \{\underline{\dim}(V): V \in \text{ind}(\Bbbk Q/I\text{-mod})\} \bigsqcup \{-\underline{\dim}(V): V \in \text{ind}(\Bbbk Q/I\text{-mod})\}.$$

\begin{example}
Let $Q$ be the quiver appearing in Example~\ref{stringsA3}. By Proposition~\ref{c-vecbij}, we have that $$\textbf{c}\text{-vec}(Q) = \{\pm(1,0,0), \pm(0,1,0), \pm(0,0,1), \pm(1,1,0), \pm(0,1,1), \pm(1,0,1)\}.$$
\end{example}

\subsection{Cyclic quivers}\label{cyclic}

In this section, we describe the second family of bound quiver algebras that we will study. To begin, let $Q(n)$ denote the quiver with $Q(n)_0 := [n]$ and $Q(n)_1 := \{i \to i+1: i \in [n-1]\}\sqcup \{n\to1\}$. For example, when $n = 4$ we have
$$\begin{array}{ccc}
\raisebox{-.25in}{Q(4)} & \raisebox{-.25in}{=} & 
\begin{xy} 0;<1pt,0pt>:<0pt,-1pt>:: 
(0,0) *+{1} ="0",
(0,40) *+{4} ="1",
(40,40) *+{3.} ="2",
(40,0) *+{2} ="3",
"0", {\ar"3"},
"2", {\ar"1"},
"3", {\ar"2"},
"1", {\ar"0"},
\end{xy}
\end{array}$$
As discussed in \cite[Proposition 2.6, Proposition 2.7]{bmr2}, the algebra $$\Lambda = \Bbbk Q(n)/\langle \alpha_1\cdots \alpha_{n-1}: \ \alpha_i \in Q(n)_1\rangle$$ is cluster-tilted of type $\mathbb{D}_n$. As such, $\Lambda$ is representation-finite. Furthermore, one observes that $\Lambda$ is a string algebra and thus the indecomposables $\Lambda$-modules are string modules. One can verify the following lemma.

\begin{lemma}\label{homQn}
Let $w_2 = x_1^{(2)} \leftarrow \cdots \leftarrow x_{k_2}^{(2)}$ and $w_1 = x_1^{(1)} \leftarrow \cdots \leftarrow x_{k_1}^{(1)}$ be strings in $\Lambda.$ Then $\Hom(M(w_2), M(w_1)) \neq 0$ if and only if $x_{k_2}^{(2)} = x_i^{(1)}$ for some $i \in [k_1]$ and $x_1^{(2)} \not \in \{x_2^{(1)}, \ldots, x_{i-1}^{(1)}\}.$ Furthermore, if $\Hom(M(w_2), M(w_1)) \neq 0$, then $\{\theta\}$ is a $\Bbbk$-basis for $\Hom_{\Lambda}(M(w_2),M(w_1))$ where

$$\begin{array}{rcl}
\theta_{x_j^{(1)}} & = & \left\{\begin{array}{rcl}1 & : & \text{ if } j \in [i] \\ 0 & : & \text{ otherwise.} \end{array}\right.
\end{array}$$
\end{lemma}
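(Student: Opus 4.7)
The plan is to determine all morphisms $\theta\colon M(w_2)\to M(w_1)$ by analyzing their scalar components at each vertex of $Q(n)$. Since $\Lambda$ annihilates any composition of $n-1$ consecutive arrows and $Q(n)$ has every arrow pointing in a single cyclic direction, every string in $\Lambda$ is a descending path along the cycle with at most $n-1$ distinct vertices, so $\dim_{\Bbbk}V^{M(w)}_{v}\in\{0,1\}$ at every $v\in Q(n)_0$. Consequently each component $\theta_v$ is a single scalar in $\Bbbk$, automatically $0$ whenever either $V^{M(w_2)}_{v}$ or $V^{M(w_1)}_{v}$ vanishes.

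For the forward direction I would apply the commuting-square relation $\theta_{t(\alpha)}\circ\varphi^{(2)}_{\alpha}=\varphi^{(1)}_{\alpha}\circ\theta_{s(\alpha)}$ at each arrow $\alpha\in Q(n)_1$. When $\alpha$ lies on both $w_2$ and $w_1$, both $\varphi$-maps are identities and the square forces $\theta_{t(\alpha)}=\theta_{s(\alpha)}$; when $\alpha$ lies on exactly one of the two, one side of the square vanishes and kills the adjacent scalar there. Propagating these equalities shows that the support of any nonzero $\theta$ must be a connected substring $u$ which is simultaneously a right segment (quotient) of $w_2$ and a left segment (submodule) of $w_1$. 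Matching the right endpoints of these segments yields condition~(1) $x^{(2)}_{k_2}=x^{(1)}_{i}$, while condition~(2) captures the requirement that $w_2$'s overhang below the overlap does not introduce a forbidden coincidence between $x^{(2)}_1$ and an interior vertex of $w_1$ above the socle.

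For the converse, assuming both conditions, I would verify that the explicit $\theta$ from the statement is a well-defined $\Lambda$-module map by checking the commuting square at each arrow of $Q(n)_1$ individually. Arrows with both endpoints in the overlap $\{x^{(1)}_1,\dots,x^{(1)}_i\}$ reduce to $1=1$ since both $\varphi$-maps are identities and both $\theta$-scalars are $1$; arrows outside both supports are trivially $0=0$; and the remaining boundary arrows of the overlap are precisely where condition~(2) is invoked to show that the target space in $M(w_1)$ vanishes, so that both sides of the square collapse to $0$. The scalar-propagation rules established in the forward direction then show that the value of $\theta$ on any single vertex of the overlap pins down all the other components, so $\Hom_{\Lambda}(M(w_2),M(w_1))$ is at most one-dimensional and $\{\theta\}$ is indeed a basis.

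The hard part will be the careful bookkeeping along the cyclic structure of $Q(n)$: one must pin down where $x^{(2)}_1$ sits modulo $n$ relative to the vertices of $w_1$, distinguish the subcases $i<k_2$, $i=k_2$, and $i>k_2$, and use condition~(2) to eliminate the pathological configuration in which $x^{(2)}_1$ lies strictly between the socle $x^{(1)}_1$ and the top vertex $x^{(1)}_i$ of the overlap. Once this arithmetic is set up, each commuting-square verification reduces to a single line and the lemma follows.
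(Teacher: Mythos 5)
Your overall strategy is the natural one: the paper states this lemma without proof ("one can verify\dots"), and the intended verification is exactly the componentwise scalar analysis you describe, in the same style as the type-$\mathbb{A}$ arguments of Lemmas~\ref{injsurj1dim} and~\ref{dimhom}. Your forward direction (propagation of the commuting-square identities, forcing any nonzero $\theta$ to factor as $M(w_2)\twoheadrightarrow M(u)\hookrightarrow M(w_1)$ with $u$ a top segment of $w_2$ and a bottom segment of $w_1$) and your one-dimensionality conclusion are sound.

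There are, however, two concrete problems in your converse. First, you assign condition (2) the wrong job: you say it is invoked at the boundary arrows of the overlap to make the relevant target space in $M(w_1)$ vanish, but those boundary squares close automatically, because every string in $\Lambda$ has fewer than $n$ vertices, so the cyclic successor of $x_1^{(1)}$ never lies in $\supp(M(w_1))$ and the cyclic predecessor of $x_{k_2}^{(2)}$ never lies in $\supp(M(w_2))$. The actual role of condition (2) is to force $i\le k_2$, i.e.\ to guarantee that the whole segment $x_1^{(1)},\dots,x_i^{(1)}$ sits inside $\supp(M(w_2))$ as its top segment; without that, the prescribed $\theta$ is not even a well-defined nonzero map, since some of its components would be declared to be $1$ on vertices where $M(w_2)$ is zero. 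Second, condition (2) eliminates only the configurations $2\le k_2\le i-1$ (your "$x_1^{(2)}$ strictly inside the overlap" case); it does not eliminate $k_2=1<i$. In that case $x_1^{(2)}=x_{k_2}^{(2)}=x_i^{(1)}$, so conditions (1) and (2) both hold, yet $\Hom(M(w_2),M(w_1))=0$ because a one-vertex string maps nontrivially into $M(w_1)$ only onto its socle vertex $x_1^{(1)}$ (for example, over $Q(3)$ take $w_1=2\leftarrow 1$ and $w_2$ the vertex $1$). So as sketched, your case analysis cannot close the subcase $i>k_2$: you must either invoke the implicit convention that the strings in question contain at least one arrow, or treat the one-vertex case for $w_2$ separately and note that there the equivalence requires $i=1$.
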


It will be convenient to introduce an alternative notation for the indecomposable $\Lambda$-modules. Let $X(i,j)$ where $i \in [n]$ and $j \in [n-1]$ denote the unique indecomposable $\Lambda$-module containing $M(i)$  and whose length is $j$. For example, $X(n,i) = M(n \leftarrow \cdots \leftarrow n-i+1)$.

\begin{example}
Let $\Lambda = \Bbbk Q(4)/\langle \alpha_1\alpha_2\alpha_3: \ \alpha_i \in Q(4)_1\rangle.$ Then the \textbf{Auslander-Reiten quiver} of $\Lambda$ is
$$\begin{array}{rcl}
\raisebox{-.4in}{$\Gamma(\Lambda\text{-mod})$} & \raisebox{-.4in}{=} & \begin{xy} 0;<1pt,0pt>:<0pt,-1pt>:: 
(0,60) *+{X(4,1)} ="0",
(60,60) *+{X(3,1)} ="1",
(120,60) *+{X(2,1)} ="2",
(180,60) *+{X(1,1)} ="3",
(240,60) *+{X(4,1)} ="4",
(30,30) *+{X(4,2)} ="5",
(60,0) *+{X(4,3)} ="6",
(0,0) *+{X(1,3)} ="7",
(90,30) *+{X(3,2)} ="8",
(120,0) *+{X(3,3)} ="9",
(150,30) *+{X(2,2)} ="10",
(210,30) *+{X(1,2)} ="11",
(180,0) *+{X(2,3)} ="12",
(240,0) *+{X(1,3)} ="13",
"1", {\ar^\tau@{-->}"0"},
"0", {\ar@{^{(}->}"5"},
"2", {\ar^\tau@{-->}"1"},
"5", {\ar@{->>}"1"},
"1", {\ar@{^{(}->}"8"},
"3", {\ar^\tau@{-->}"2"},
"8", {\ar@{->>}"2"},
"2", {\ar@{^{(}->}"10"},
"10", {\ar@{->>}"3"},
"3", {\ar@{^{(}->}"11"},
"11", {\ar@{->>}"4"},
"5", {\ar@{^{(}->}"6"},
"7", {\ar@{->>}"5"},
"6", {\ar@{->>}"8"},
"8", {\ar@{^{(}->}"9"},
"9", {\ar@{->>}"10"},
"10", {\ar@{^{(}->}"12"},
"12", {\ar@{->>}"11"},
"11", {\ar@{^{(}->}"13"},
"4", {\ar^\tau@{-->}"3"},
"8", {\ar^\tau@{-->}"5"},
"10", {\ar^\tau@{-->}"8"},
"11", {\ar^\tau@{-->}"10"},
\end{xy}.
\end{array}$$
\end{example} 

\begin{remark}
For any quiver $Q(n)$, the Auslander-Reiten quiver of the corresponding cluster-tilted algebra may be embedded on a cylinder. In general, the irreducible morphisms between indecomposable $\Lambda$-modules are exactly those of the form $$X(i,j) \hookrightarrow X(i,j+1) \ \ \ \ \ \text{and} \ \ \ \ \ X(i,j) \twoheadrightarrow X(i-1,j-1).$$

\noindent Also, if $X(i,j) \in \text{ind}(\Lambda\text{-mod})$ and $j \in [n-2]$, then $\tau^{\pm 1} X(i,j) = X(i\pm 1,j)$ where we agree that $\tau X(n,j) = X(1,j)$ and $\tau^{-1} X(1,j) = X(n,j).$  Roughly speaking, $\tau$ acts on non-projective modules by rotation of dimension vectors.  The modules $\{X(i,n-1)\}_{i \in [n]}$ are both the indecomposable projective and indecomposable injective modules so $\Lambda$ is self-injective. Thus $\tau^{\pm 1}X(i,n-1) = 0$ for any $i\in [n].$
\end{remark}

We conclude this section by classifying extensions of indecomposable modules $M(w_1), M(w_2) \in \text{ind}(\Lambda\text{-mod})$ where $\Lambda=\Bbbk Q(n)/\langle \alpha_1\cdots \alpha_{n-1}: \ \alpha_i \in Q(n)_1\rangle$ (i.e., extensions of the form $0\to M(w_2) \to Z \to M(w_1) \to 0$ where $Z \in \Lambda\text{-mod}.$) This classification will be an important tool in the proofs of our main results. The first Lemma we present can be easily verified by considering the structure of the Auslander-Reiten quiver of $\Lambda.$ Recall that $$\underline{\Hom}_\Lambda(M,N) := \{f \in \Hom_\Lambda(M,N): \ f \text{ factors through a projective $\Lambda$-module}\}.$$

\begin{lemma}\label{homstabhom}
If $i\in\Zbb/n\Zbb$ and $1\leq j<n-1$, then
$$\begin{array}{rl} a) & \{X\in\text{ind}(\Lambda\text{-mod}):\ {\Hom}_{\Lambda}(X(i,j),X)\neq 0\}=\left\{X(s,t) \in \text{ind}(\Lambda\text{-mod}):\ \begin{array}{c}s\in[i-j+1,i]_n,\\ j-d(i,s)\leq t\leq n-1\end{array}\right\}
\ \text{and}\\
b) & \{X\in\text{ind}(\Lambda\text{-mod}):\ \underline{\Hom}_{\Lambda}(X(i,j),X)\neq 0\}=\left\{X(s,t) \in \text{ind}(\Lambda\text{-mod}):\ \begin{array}{c}s\in[i-j+1,i]_n,\\ j-d(i,s)\leq t\leq n-2-d(i,s)\end{array}\right\}\end{array}$$ where $d(a,b):= \#\{\text{arrows in the string } w=a\leftarrow \cdots \leftarrow b\}$ and $[i-j+1,i]_n$ is the \textbf{cyclic interval} in $[n]$ (i.e., there is a string $i \leftarrow (i-1) \leftarrow \cdots \leftarrow (i-j+2) \leftarrow (i-j+1)$ and the arithmetic is carried out mod $n$).
\end{lemma}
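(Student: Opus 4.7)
The plan is to deduce both parts directly from Lemma~\ref{homQn} together with an explicit analysis of the submodule chain of the cyclic string modules $X(k,\ell)$. Since the arrows of $Q(n)$ run $v\to v+1$, each $X(k,\ell) = M(k \leftarrow (k-1) \leftarrow \cdots \leftarrow (k-\ell+1))$ has a chain of submodules $0 \subsetneq X(k,1) \subsetneq X(k,2) \subsetneq \cdots \subsetneq X(k,\ell)$, with corresponding quotients $X(i,j)/X(i,r) \cong X(i-r,j-r)$ for $r \in [0,j]$.  Lemma~\ref{homQn} additionally gives that every nonzero $\Hom$-space between indecomposables is one-dimensional, which will drive the factorization analysis in part (b).

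For (a), a nonzero hom $f : X(i,j) \to X(s,t)$ factors as $X(i,j) \twoheadrightarrow \mathrm{im}(f) \hookrightarrow X(s,t)$, forcing $\mathrm{im}(f)$ to be both a proper quotient of $X(i,j)$ and a nonzero submodule of $X(s,t)$.  Matching $X(i-r,j-r) = X(s,r')$ yields $r = i-s = d(i,s)$ and $r' = j-d(i,s)$; the admissibility ranges $r\in[0,j-1]$ and $r'\in[1,t]$ translate exactly to $s\in[i-j+1,i]_n$ and $t\ge j-d(i,s)$, while $t\le n-1$ is intrinsic to the parameterization of $X(s,\cdot)$.

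For (b), the self-injectivity of $\Lambda$ identifies its indecomposable projective-injectives as $\{X(k,n-1)\}_{k\in[n]}$, so a nonzero $f$ factors through a projective if and only if it factors through some $X(k,n-1)$.  Since each relevant $\Hom$-space is one-dimensional, such a factorization exists if and only if the composition of nonzero maps $g:X(i,j)\to X(k,n-1)$ and $h:X(k,n-1)\to X(s,t)$ is itself nonzero (necessarily a nonzero scalar multiple of $f$).  Applying (a) to each factor: $g$ is nonzero iff $k\in[i-j+1,i]_n$ (the inequality $n-1\ge j-d(i,k)$ being automatic), and $h$ is nonzero iff $s\ne k+1$ and $t\ge (n-1)-d(k,s)$.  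An image chase inside $X(k,n-1)$ shows that for $k\in[s,i]$ the composite has image $X(s,j-d(i,s))=\mathrm{im}(f)$ (using $d(i,s)=d(i,k)+d(k,s)$), hence is a nonzero multiple of $f$; whereas for $k\in[i-j+1,s-1]$ the image of $g$ is swallowed by the kernel of $h$ and the composite vanishes.  The existence of some $k\in[s,i]$ with $k\ge s+n-1-t$ thus reduces to $i\ge s+n-1-t$, i.e., $t\ge n-1-d(i,s)$.  Negating this and combining with (a) yields $j-d(i,s)\le t\le n-2-d(i,s)$, as claimed.

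The main obstacle is the bookkeeping in the image chase, in particular the cyclic inclusion $X(k,j-d(i,k)) \subseteq X(k,d(k,s))$ when $k<s$: this reduces to $j-(i-k)\le (k-s)+n$, equivalently $s\le n-j+i$, which holds automatically from $s\le i$ and $j\le n-1$.  Once this ``wraparound'' branch is dispatched, the maximal in-window choice $k=i$ pinpoints the threshold $t=n-1-d(i,s)$, and the remaining verifications are routine.
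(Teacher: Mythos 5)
Your argument is correct. Part (a) is a clean consequence of the uniserial structure of the modules $X(i,j)$ over the self-injective Nakayama-type algebra $\Lambda$: submodules are $X(i,r)$, quotients are $X(i-r,j-r)$, and a nonzero map is detected exactly by a common nonzero subquotient, which pins down $s\in[i-j+1,i]_n$ and $t\geq j-d(i,s)$. In part (b), the key reduction -- that a nonzero $f$ factors through a projective if and only if some composite through a single indecomposable projective-injective $X(k,n-1)$ is nonzero -- is legitimate precisely because $\Hom_{\Lambda}(X(i,j),X(s,t))$ is one-dimensional (your appeal to Lemma~\ref{homQn}); writing a factorization through $P=\bigoplus_k X(k,n-1)^{a_k}$ as a sum of composites, each summand is a scalar multiple of $f$, so at least one is nonzero, and conversely one rescales. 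The kernel/image chase ($\mathrm{im}\,g=X(k,j-d(i,k))$, $\ker h=X(k,d(k,s))$) then correctly yields that a projective factorization exists iff $t\geq n-1-d(i,s)$, giving the stated window $j-d(i,s)\leq t\leq n-2-d(i,s)$.

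As for comparison: the paper does not actually write a proof of this lemma -- it is asserted to be ``easily verified by considering the structure of the Auslander-Reiten quiver of $\Lambda$,'' i.e.\ by reading off the Hom- and stable-Hom-hammocks on the cylinder described in the preceding Remark. Your route replaces that visual AR-quiver verification with an explicit module-theoretic computation: uniserial submodule lattices for (a), and for (b) a reduction of ``factors through a projective'' to factorization through one indecomposable projective (driven by one-dimensionality of Hom), followed by an image-versus-kernel comparison inside $X(k,n-1)$. The AR-quiver reading is faster if one already trusts the shape of the hammocks; your argument is self-contained, makes the role of the threshold $t=n-1-d(i,s)$ transparent (it is exactly where the image of $X(i,j)$ in the relevant projective stops being killed), and correctly dispatches the wraparound branch $k\in[i-j+1,s-1]$, where the composite dies because $j-d(i,s)>n$ would be forced. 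The only stylistic remark is that the factor-through-one-indecomposable-projective step could alternatively be phrased via the projective cover of the target, but your scalar argument is complete as written.
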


We now use Lemma~\ref{homstabhom} to classify extensions. By Lemma~\ref{dimext}, the dimension of $\Ext_{\Lambda}^1(X(k,\ell),X(i,j))$ is at most 1 for any indecomposables $X(k,\ell)$ and $X(i,j)$. Thus there is at most one nonsplit extension of the form $0\to X(i,j) \to Z \to X(k,\ell) \to 0$ up to equivalence of extensions.

\begin{proposition}\label{extclassif}
Let $i,k\in\Zbb/n\Zbb$ and $1\leq j,\ell<n-1$.  If $\Ext_{\Lambda}^1(X(k,\ell),X(i,j))\neq 0$, then

$\begin{array}{rl}
i) & \text{if } \supp(X(i,j)\cap X(k,\ell))=\emptyset, \text{ then the unique nonsplit extension is of the form }\\ & 0\ra X(i,j)\ra X(i,j+\ell)\ra X(k,\ell)\ra 0,\\
ii) & \text{if } \supp(X(i,j)\cap X(k,\ell))\neq\emptyset, \text{ then the unique nonsplit extension is of the form }\\ & 0\ra X(i,j)\ra X(i,d(i,k)+\ell)\oplus X(k,j-d(i,k))\ra X(k,\ell)\ra 0.
\end{array}$
\end{proposition}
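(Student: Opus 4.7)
The key observation is that $\dim_{\Bbbk}\Ext^1_{\Lambda}\le 1$ by Lemma~\ref{dimext}, so up to equivalence there is at most one nonsplit extension in each case, and it suffices to exhibit one whose middle term has the claimed form.

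I would begin by translating the hypothesis via the Auslander--Reiten formula. Since $\Lambda$ is self-injective,
$$\Ext^1_\Lambda(X(k,\ell),X(i,j))\;\cong\; D\,\underline{\Hom}_\Lambda(X(i,j),\tau X(k,\ell)),$$
and the preceding Remark identifies $\tau X(k,\ell)=X(k+1,\ell)$ whenever $\ell<n-1$. Applying Lemma~\ref{homstabhom}(b) with this target, the hypothesis $\Ext^1\neq 0$ is equivalent to $k\in[i-j,i-1]_n$ together with $j-d(i,k+1)\le\ell\le n-2-d(i,k+1)$. The value $k\equiv i-j\pmod n$ produces the disjoint-support scenario (case~(i)), while any other $k\in[i-j+1,i-1]_n$ forces $k\in\supp X(i,j)\cap\supp X(k,\ell)$ (case~(ii)), so the proposition's dichotomy matches precisely this split.

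For case~(i) the upper bound specializes to $j+\ell\le n-1$, so $X(i,j+\ell)$ is a well-defined indecomposable; the submodule filtration of a string module yields the canonical short exact sequence $0\to X(i,j)\hookrightarrow X(i,j+\ell)\twoheadrightarrow X(i-j,\ell)\to 0$, which is nonsplit since the middle term is indecomposable while $X(i,j)\oplus X(i-j,\ell)$ is not, and $k\equiv i-j\pmod n$ identifies the quotient with $X(k,\ell)$. For case~(ii) set $\alpha:=d(i,k)$, so $1\le\alpha\le j-1$. The lower bound then reads $\ell\ge j-\alpha+1$, giving $\alpha+\ell\ge j+1$, while the upper bound yields $\alpha+\ell\le n-1$. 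Hence $X(i,\alpha+\ell)$ is defined and one has a string inclusion $a:X(i,j)\hookrightarrow X(i,\alpha+\ell)$. Let $b:X(i,j)\twoheadrightarrow X(k,j-\alpha)$ be the quotient by $X(i,\alpha)\subset X(i,j)$; dually let $p:X(i,\alpha+\ell)\twoheadrightarrow X(k,\ell)$ be the quotient by $X(i,\alpha)\subset X(i,\alpha+\ell)$ and $q:X(k,j-\alpha)\hookrightarrow X(k,\ell)$ the string inclusion. Lemma~\ref{homQn} forces $pa=qb$ (both are, up to scalar, the unique nonzero morphism $X(i,j)\to X(k,\ell)$), producing the complex
$$0\to X(i,j)\xrightarrow{(a,-b)} X(i,\alpha+\ell)\oplus X(k,j-\alpha)\xrightarrow{(p,q)} X(k,\ell)\to 0,$$
whose exactness I would verify by a vertex-by-vertex dimension count on the overlap $\supp X(i,j)\cap\supp X(k,\ell)$. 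Nonsplitness in both cases is immediate from Krull--Schmidt by comparing the indecomposable summands of the middle term with those of $X(i,j)\oplus X(k,\ell)$, and combined with $\dim_\Bbbk\Ext^1\le 1$ this yields the uniqueness clause.

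The main technical hurdle I anticipate is the exactness check in case~(ii): the overlap region may wrap around the cycle $Q(n)$, and one must track the linear identifications at each vertex carefully. Lemma~\ref{homQn}'s explicit description of $\Hom$ bases between indecomposable $\Lambda$-modules is the central tool here, both for proving $pa=qb$ and for confirming that the cokernel of $(a,-b)$ is exactly $X(k,\ell)$ rather than some other module with the same dimension vector.
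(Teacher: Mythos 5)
Your proposal is correct and follows essentially the same route as the paper's proof: the Auslander--Reiten formula (you use $D\underline{\Hom}_\Lambda(X(i,j),\tau X(k,\ell))$ with $\tau X(k,\ell)=X(k+1,\ell)$, while the paper uses $\underline{\Hom}_\Lambda(\tau^{-1}X(i,j),X(k,\ell))=\underline{\Hom}_\Lambda(X(i-1,j),X(k,\ell))$ --- the same computation mirrored) together with Lemma~\ref{homstabhom} to pin down $k$ and the bounds $j+\ell\le n-1$, $d(i,k)+\ell\le n-1$, $j-d(i,k)\ge 1$, and then the canonical string inclusions and quotients to build the middle term, with uniqueness from Lemma~\ref{dimext}. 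The only difference is that you make explicit the exactness and nonsplitness verification (the commuting square $pa=qb$ via one-dimensionality of $\Hom$, a dimension count, and Krull--Schmidt) that the paper leaves implicit after exhibiting the four maps; this is a harmless and indeed welcome amplification, not a different argument.
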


\begin{proof}
By the Auslander-Reiten formula,
\begin{align*}
\dim\Ext_{\Lambda}^1(X(k,\ell),X(i,j)) &=\dim\underline{\Hom}_{\Lambda}(\tau^{-1}X(i,j),X(k,\ell))\\
&=\dim\underline{\Hom}_{\Lambda}(X(i-1,j),X(k,\ell)).
\end{align*}
Hence, if $\Ext_{\Lambda}^1(X(k,\ell),X(i,j))\neq 0$ then $\supp(X(i-1,j))\cap\supp(X(k,\ell))\neq\emptyset$.

i) Assume $\supp(X(i,j))\cap\supp(X(k,\ell))=\emptyset$, then $\supp(X(i-1,j))\cap\supp(X(k,\ell))=\{i-j\}$ since $\supp(X(i-1,j))\backslash\supp(X(i,j))=\{i-j\}$.  Since there is a nonzero morphism $X(i-1,j)\ra X(k,\ell)$, we must have $k=i-j$ by the description of morphisms in Lemma \ref{homstabhom} $a)$.  Then $X(k,\ell)=X(i-j,\ell)$ and the extension must be of the form
$$0\ra X(i,j)\ra X(i,j+\ell)\ra X(k,\ell)\ra 0.$$

ii) Assume $\supp(X(i,j))\cap\supp(X(k,\ell))\neq\emptyset$. Here it is enough to show that there are inclusions (resp., surjections) $X(i,j) \hookrightarrow X(i,d(i,k)+\ell)$ and $X(k,j-d(i,k)) \hookrightarrow X(k,\ell)$ (resp., $X(i,j) \twoheadrightarrow X(k,j-d(i,k))$ and $X(i,d(i,k)+\ell) \twoheadrightarrow X(k,\ell)$). 

To do so, we first show that $d(i,k) + \ell \le n-1$. Observe that $\ell \le n - 2 - d(i-1,k)$ by Lemma~\ref{homstabhom} $b)$ since $\underline{\Hom}_\Lambda(X(i-1,j),X(k,\ell)) \neq 0.$ Since $d(i,k) - d(i-1,k) = 1,$ we have that $d(i,k) + \ell \le n-1.$ Thus $X(i,d(i,k) + \ell) \in \text{ind}(\Lambda\text{-mod})$ and therefore there is an inclusion $X(i,j)\hookrightarrow X(i,d(i,k)+\ell).$ 

Next, we show that  and $j-d(i,k) \ge 1.$ By Lemma \ref{homstabhom} $a)$, we deduce that $k\in[i-j+1,i]_n$ since $k \neq i-j.$ Therefore, we conclude that $d(i,k) \le j-1$ so $j-d(i,k) \ge 1.$ Thus $X(k,j-d(i,k)) \in \text{ind}(\Lambda\text{-mod})$ so there is an inclusion $X(k,j-d(i,k))\hookrightarrow X(k,\ell)$.

Lastly, we show that the desired surjections exist. Observe that since $d(i,k) = \#\{\text{arrows in the string } i \leftarrow (i-1) \leftarrow \cdots \leftarrow (k+1) \leftarrow k\}$ we have that $i-d(i,k) = k$ where this equation holds mod $n$. Thus, by composing surjective irreducible morphisms, we obtain the desired surjections $$X(i,j) \twoheadrightarrow X(i-1,j-1) \twoheadrightarrow \cdots \twoheadrightarrow X(k+1,j+1-d(i,k)) \twoheadrightarrow X(k,j-d(i,k))$$ and $$X(i,d(i,k)+\ell) \twoheadrightarrow X(i-1,d(i,k)+\ell-1) \twoheadrightarrow \cdots \twoheadrightarrow X(k+1,\ell+1)\twoheadrightarrow X(k,\ell).$$ Hence, the unique extension is of the form
$$0\ra X(i,j)\ra X(i,d(i,k)+\ell)\oplus X(k,j-d(i,k))\ra X(k,\ell)\ra 0.$$\end{proof}

When we use this classification of extensions to prove our main results, we will want to use only the notation for string modules. Thus we give the following translation of Proposition~\ref{extclassif} using the notation for string modules.

\begin{lemma}\label{extq_n}
Let $Q = Q(n)$ for some $n \ge 3$ and let $\Lambda$ denote the corresponding cluster-tilted algebra. Let $M(w_2),M(w_1) \in \text{ind}(\Lambda\text{-mod})$ where $\Ext^1_\Lambda(M(w_1),M(w_2)) \neq 0$. Let $$\xi = 0 \to M(w_2) \to Z \to M(w_1) \to 0$$ denote the unique nonsplit extension up to equivalence of extensions with $Z \in \Lambda\text{-mod}$. Then either

$\begin{array}{rl}
i) & \text{supp}(M(w_2))\cap \text{supp}(M(w_1)) = \emptyset \text{ and } Z = M(w_2 \longleftarrow w_1) \text{ or}\\
ii) & w_2 = u \longleftarrow w \text{ with } w_1 = w \longleftarrow v \text{ for some strings } u, v, \text{ and } w \text{ where } w = x_1 \stackrel{\alpha_1}{\longleftarrow} x_2 \cdots x_{k-1} \stackrel{\alpha_{k-1}}{\longleftarrow} x_k \\
& \text{satisfies } \text{supp}(M(w_2)) \cap \text{supp}(M(w_1)) = \{x_i\}_{i \in [k]}, \ (Q(n))_0\backslash(\text{supp}(M(w_2)) \cap \text{supp}(M(w_1))) \neq \emptyset,  \text{ and }\\
& Z = M(u \leftarrow w \leftarrow v) \oplus M(w).
\end{array}$
\end{lemma}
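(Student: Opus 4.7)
The plan is to translate Proposition~\ref{extclassif} directly into string-module language. Since every indecomposable $\Lambda$-module is a string module, we write $M(w_2) = X(i,j)$ and $M(w_1) = X(k,\ell)$ with $i, k \in \Zbb/n\Zbb$ and $j, \ell \in [n-1]$; explicitly $w_2 = i \leftarrow (i-1) \leftarrow \cdots \leftarrow (i-j+1)$ and $w_1 = k \leftarrow \cdots \leftarrow (k-\ell+1)$, with all labels reduced mod $n$. Since $\Ext^1_\Lambda(M(w_1), M(w_2)) \neq 0$, Proposition~\ref{extclassif} provides the unique nonsplit extension in exactly one of the two listed forms, which we translate in turn.

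In case $i)$, the argument inside the proof of Proposition~\ref{extclassif}$i)$ forces $k = i - j \pmod n$. Hence $w_2$ and $w_1$ are joined at the arrow $(i-j) \to (i-j+1)$ of $Q(n)$, so $w_2 \leftarrow w_1$ is a well-defined string that visibly equals the string $i \leftarrow \cdots \leftarrow (k-\ell+1)$ underlying $X(i, j+\ell)$. The bound $j + \ell \leq n-1$ needed so that this string is not killed by the defining relation of $\Lambda$ follows from the $\Ext^1$-nonvanishing condition combined with Lemma~\ref{homstabhom}$b)$. Therefore $Z = M(w_2 \leftarrow w_1)$, matching the claim.

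In case $ii)$, Lemma~\ref{homstabhom}$a)$ together with $k \neq i-j$ forces $k \in [i-j+1, i-1]_n$ and $d(i,k) \in [1, j-1]$, and the common support of $M(w_2)$ and $M(w_1)$ is the cyclic interval $[i-j+1, k]_n$, of cardinality $j - d(i,k)$. Setting
\[u := i \leftarrow \cdots \leftarrow (k+1), \quad w := k \leftarrow \cdots \leftarrow (i-j+1), \quad v := (i-j) \leftarrow \cdots \leftarrow (k-\ell+1),\]
one has $w_2 = u \leftarrow w$ and $w_1 = w \leftarrow v$ by construction. Matching endpoints shows $X(i, d(i,k) + \ell) = M(u \leftarrow w \leftarrow v)$ and $X(k, j - d(i,k)) = M(w)$, so Proposition~\ref{extclassif}$ii)$ rewrites as $Z = M(u \leftarrow w \leftarrow v) \oplus M(w)$. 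Finally, the constraint $(Q(n))_0 \setminus (\text{supp}(M(w_2)) \cap \text{supp}(M(w_1))) \neq \emptyset$ is automatic, since the common support has $j - d(i,k) \leq n-1$ vertices, leaving at least one vertex of $(Q(n))_0$ outside it.

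The only real obstacle is bookkeeping: one must track the mod-$n$ identifications, recognize that $d(i,k) \equiv i - k \pmod n$ under the conventions of Lemma~\ref{homstabhom}, and verify that each concatenated walk is a reduced string not killed by the relations of $\Lambda$. All such verifications are routine from the cyclic structure of $Q(n)$ and the length bounds already used above.
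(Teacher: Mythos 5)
Your proposal is correct and matches the paper's approach: the paper gives no separate argument for this lemma, presenting it explicitly as a translation of Proposition~\ref{extclassif} into string-module notation, which is precisely what you carry out (including the identifications $X(i,j+\ell)=M(w_2\leftarrow w_1)$, $X(i,d(i,k)+\ell)=M(u\leftarrow w\leftarrow v)$, $X(k,j-d(i,k))=M(w)$, and the length bounds from Lemma~\ref{homstabhom}). Your bookkeeping is in fact somewhat more careful than the paper's, since you also note why the relation of $\Lambda$ does not kill the concatenated strings and why some vertex lies outside the common support.
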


\section{Lattice properties}\label{latticeprops}

In this section, we give some background on lattices.  After establishing notation in Section~\ref{subsec_lattice_basics}, we discuss semidistributive, congruence-uniform, and polygonal lattices in the remaining sections.

\subsection{Basic notions}\label{subsec_lattice_basics}

A \textbf{lattice} $L$ is a poset for which every pair of elements $x,y\in L$ has a least upper bound $x\vee y$ and greatest lower bound $x\wedge y$, called the \textbf{join} and \textbf{meet}, respectively.  A lattice is \textbf{complete} if meets and joins exist for arbitrary subsets of $L$.  We will mainly deal with finite lattices, where these two conditions coincide.  Any complete lattice has a \textbf{top element} $\bigvee L := \bigvee_{x \in L} x$ and \textbf{bottom element} $\bigwedge L := \bigwedge_{x \in L} x$, which we denote by $\hat{1}$ and $\hat{0}$, respectively.

Many properties of posets come in dual pairs.  Given a poset $(P,\leq)$, its \textbf{dual poset} $(P^{\op},\leq^{\op})$ has the same underlying set and $x\leq^{\op} y$ if and only if $y\leq x$.  If $P$ is a lattice, then $P^{\op}$ has the same lattice structure with $\wedge$ and $\vee$ swapped.

A \textbf{lattice congruence} $\Theta$ on a lattice $L$ is an equivalence relation that respects the lattice operations; i.e., for $x,y,z\in L$, $x\equiv y\mod{\Theta}$ implies $(x\vee z)\equiv(y\vee z)\mod{\Theta}$ and $(x\wedge z)\equiv(y\wedge z)\mod{\Theta}$.  The lattice operations on $L$ induce a lattice structure on the set of equivalence classes of $\Theta$, which we denote $L/\Theta$.  The natural map $L\ra L/\Theta$ is a \textbf{lattice quotient map}.

Figure \ref{fig_lattice_quotient} contains two examples of lattice quotient maps.  The blue arrows in each of the upper lattices are contracted to form the lower lattices.

To prove that a given equivalence relation is a lattice congruence, we will make use of the following well-known result.

\begin{lemma}\cite[Exercise 9.37]{ReadingPAB}\label{lattconglemma}
Let $L$ be a finite lattice with idempotent, order-preserving maps $\pi_{\downarrow},\pi^{\uparrow}:L\ra L$ that satisfy $\pi_\downarrow(x) \le x \le \pi^\uparrow(x)$ for any $x \in L$.  Let $\Theta$ be the equivalence relation $x\equiv y\mod{\Theta}$ if $\pi_{\downarrow}(x)=\pi_{\downarrow}(y)$.  If $\pi_{\downarrow}\circ\pi^{\uparrow}=\pi_{\downarrow}$ and $\pi^{\uparrow}\circ\pi_{\downarrow}=\pi^{\uparrow}$, then $\Theta$ is a lattice congruence of $L$.
\end{lemma}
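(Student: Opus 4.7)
The plan is to verify the two congruence properties---compatibility with joins and compatibility with meets---via a single structural identity relating each projection to itself inside a lattice operation. The starting observation is that the hypotheses $\pi_\downarrow\circ\pi^\uparrow=\pi_\downarrow$ and $\pi^\uparrow\circ\pi_\downarrow=\pi^\uparrow$ force the two maps to have identical fibers: if $\pi_\downarrow(x)=\pi_\downarrow(y)$, then applying $\pi^\uparrow$ to both sides and invoking $\pi^\uparrow\circ\pi_\downarrow=\pi^\uparrow$ yields $\pi^\uparrow(x)=\pi^\uparrow(y)$, and the converse is symmetric. Hence $x\equiv y\pmod\Theta$ if and only if $\pi^\uparrow(x)=\pi^\uparrow(y)$, so I may detect the congruence with either map.

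The core of the argument is the identity
\[
\pi^\uparrow(x\vee z)=\pi^\uparrow(\pi^\uparrow(x)\vee z)
\]
for all $x,z\in L$, together with its dual $\pi_\downarrow(x\wedge z)=\pi_\downarrow(\pi_\downarrow(x)\wedge z)$. I would prove this granting the conventions (signaled by the arrows in the notation) that $\pi_\downarrow(w)\leq w$ and $\pi^\uparrow(w)\geq w$, from which it follows that each equivalence class is the interval $[\pi_\downarrow(x),\pi^\uparrow(x)]$. The $\leq$ direction of the identity is monotonicity applied to $x\leq\pi^\uparrow(x)$ joined with $z$; for the $\geq$ direction, note that $\pi^\uparrow(x)\leq\pi^\uparrow(x\vee z)$ and $z\leq x\vee z\leq\pi^\uparrow(x\vee z)$, so $\pi^\uparrow(x)\vee z\leq\pi^\uparrow(x\vee z)$, and applying $\pi^\uparrow$ together with idempotence gives the claim.

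The congruence property is then immediate: assuming $x\equiv y$, we have $\pi^\uparrow(x)=\pi^\uparrow(y)$, so substitution into the identity produces $\pi^\uparrow(x\vee z)=\pi^\uparrow(\pi^\uparrow(x)\vee z)=\pi^\uparrow(\pi^\uparrow(y)\vee z)=\pi^\uparrow(y\vee z)$, and hence $x\vee z\equiv y\vee z$. The meet case is handled dually through $\pi_\downarrow$ and its own identity.

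The main obstacle I anticipate is clarifying the implicit role of $\pi_\downarrow(w)\leq w$ and $\pi^\uparrow(w)\geq w$: they are strongly suggested by the arrow notation and by the informal picture of Figure \ref{fig_lattice_quotient} (projections to the bottom and top of each class), but they are not spelled out in the statement, and the key identity above genuinely fails without them. Once these conventions are made explicit, the remainder is bookkeeping with monotonicity, idempotence, and the two compatibility identities.
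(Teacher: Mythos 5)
Your argument is correct, and since the paper states this lemma as a well-known fact without giving a proof, there is nothing to compare it against except the literature; your direct verification via the identities $\pi^\uparrow(x\vee z)=\pi^\uparrow(\pi^\uparrow(x)\vee z)$ and $\pi_\downarrow(x\wedge z)=\pi_\downarrow(\pi_\downarrow(x)\wedge z)$ is a clean way to do it. The structure is sound: the compositional hypotheses are used exactly once, to show that $\pi_\downarrow$ and $\pi^\uparrow$ have the same fibers, after which the join-compatibility of $\Theta$ is read off from the $\pi^\uparrow$-identity and the meet-compatibility from the $\pi_\downarrow$-identity (each of which needs only monotonicity, idempotence, and the contraction/expansion inequalities for the respective map).

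You are also right to insist on making $\pi_\downarrow(x)\leq x\leq\pi^\uparrow(x)$ explicit: the lemma is genuinely false as literally stated, not merely harder to prove. Take $L=\{\hat{0},a,b,\hat{1}\}$ the square lattice with $a,b$ incomparable, let $f$ fix $\hat{0},b,\hat{1}$ and send $a\mapsto\hat{0}$, and set $\pi_\downarrow=\pi^\uparrow=f$. Then both maps are idempotent and order-preserving and $\pi_\downarrow\circ\pi^\uparrow=\pi_\downarrow$, $\pi^\uparrow\circ\pi_\downarrow=\pi^\uparrow$ hold trivially, yet $a\equiv\hat{0}$ while $a\vee b=\hat{1}\not\equiv b=\hat{0}\vee b$, so $\Theta$ is not a congruence. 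With the inequalities added (as the arrow notation and the intended application in the paper presume, and as the standard statement in the literature requires), your two-sided verification of the key identity and the transfer between the fibers of $\pi_\downarrow$ and $\pi^\uparrow$ go through exactly as you wrote them, and your observation that each class is then the interval $[\pi_\downarrow(x),\pi^\uparrow(x)]$ is a correct (though not needed) byproduct.
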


The maps $\pi_{\downarrow}$ and $\pi^\uparrow$ are idempotent endomorphisms of $L$.  However, we may identify $\pi_{\downarrow}$ with the natural lattice quotient map $L\ra L/\Theta$ when convenient.

\begin{figure}
\includegraphics{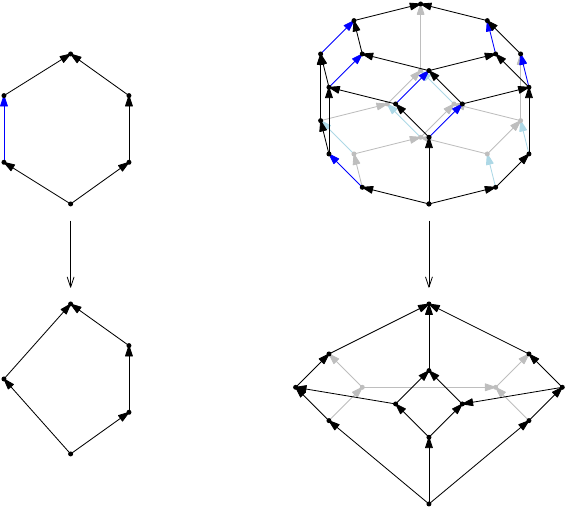}
\caption{\label{fig_lattice_quotient}Two examples of lattice quotient maps.}
\end{figure}

An element $j$ of a lattice $L$ is \textbf{join-irreducible} (dually, \textbf{meet-irreducible}) if $j\neq\hat{0}$ and for $x,y\in L$, $j=x\vee y$ implies $j=x$ or $j=y$.  For finite lattices, an element $j$ is join-irreducible exactly when it covers a unique element, denoted $j_*$.  Dually, a meet-irreducible element $m$ is covered by a unique element $m^*$.  We let $J(L)$ (resp., $M(L)$) be the set of join-irreducible (resp., meet-irreducible) elements of $L$.

\subsection{Semidistributive lattices}

A lattice $L$ is \textbf{meet-semidistributive} if for $x,y,z\in L$,
$$(x\vee y)\wedge z=x\wedge z\hspace{5mm}\mbox{ holds whenever }\hspace{5mm}x\wedge z=y\wedge z.$$
A lattice is \textbf{join-semidistributive} if its dual is meet-semidistributive.  It is \textbf{semidistributive} if it is both join-semidistributive and meet-semidistributive.   Clearly, every distributive lattice is semidistributive.  On the other hand, the five-element lattice of Figure \ref{fig_lattice_quotient} is semidistributive but not distributive.  As semidistributivity is defined by equations in the lattice operations, it is preserved under lattice quotients.

For a finite lattice $L$, any element $x$ admits a representation of the form $x=\bigvee A$ where $A$ is a subset of $J(L)$.  The representation is \textbf{irredundant} if $x>\bigvee A^{\pr}$ for any proper subset $A^{\pr}$ of $A$.  Given $A,B\subseteq J(L)$, we say $A\leq B$ if every element of $A$ is less than or equal to some element of $B$.  A join-representation $x=\bigvee A$ for $x\in L,\ A\subseteq J(L)$ is called a \textbf{canonical join-representation} if it is irredundant and $A\leq B$ whenever $B\subseteq J(L)$ with $x=\bigvee B$.  \textbf{Canonical meet-representations} are defined dually. The following lemma gives an explicit definition of canonical meet-representations. 

\begin{lemma}\label{canonjoincanonmeet}
Given an element $x$ of a lattice $L$, the expression $x=\bigvee_{i=1}^l j_i$ is a canonical join-representation of $x$ in $L$ if and only if $x^{\op}=\bigwedge_{i=1}^lj_i^{\op}$ is a canonical meet-representation in the dual lattice $L^{\op}$.
\end{lemma}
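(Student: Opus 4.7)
The plan is to show that every clause in the definition of a canonical join-representation translates term-by-term into the defining dual conditions for a canonical meet-representation once the lattice is replaced by $L^{\op}$. Because canonical meet-representation is defined only by duality in the excerpt, the content of the lemma is essentially to unwind that duality.

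First I would note the very basic fact that $\bigvee_{i=1}^l j_i$ computed in $L$ equals $\bigwedge_{i=1}^l j_i^{\op}$ computed in $L^{\op}$, since the two lattice operations are exchanged when passing to $L^{\op}$. Hence the identity $x=\bigvee_{i=1}^l j_i$ in $L$ is literally the same identity as $x^{\op}=\bigwedge_{i=1}^l j_i^{\op}$ in $L^{\op}$. Next I would observe that $j\in J(L)$ if and only if $j^{\op}\in M(L^{\op})$: the characterization that $j$ covers a unique element $j_*\in L$ translates to $j^{\op}$ being covered by the unique element $(j_*)^{\op}\in L^{\op}$, which is the definition of meet-irreducibility in the dual.

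For the irredundancy clause, the condition $x>\bigvee A^{\pr}$ for every proper subset $A^{\pr}\subsetneq A$ translates directly into $x^{\op}<\bigwedge (A^{\pr})^{\op}$ in $L^{\op}$, which is the dual irredundancy statement for meet-representations. Finally, the antichain order $A\leq B$ on $J(L)$ (every $a\in A$ lies below some $b\in B$) becomes, after passage to $L^{\op}$, the reverse antichain order on $M(L^{\op})$: every element of $A^{\op}$ lies above (in $L^{\op}$: below) some element of $B^{\op}$. Thus the minimality of $A$ among subsets of $J(L)$ with $\bigvee A=x$ becomes the analogous extremality of $A^{\op}$ among subsets of $M(L^{\op})$ with $\bigwedge A^{\op}=x^{\op}$, which is precisely the defining property of a canonical meet-representation.

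There is no substantive obstacle here; the lemma is purely formal and the proof consists in checking that each of the three ingredients of a canonical join-representation (the join equation, irredundancy, and minimality in the antichain order) is self-dual in the appropriate sense. The only mildly subtle point is keeping track of how the antichain order $A\leq B$ is itself reversed under passage to $L^{\op}$, so that "minimal" in $L$ matches the analogous extremal condition defining canonical meet-representations in $L^{\op}$.
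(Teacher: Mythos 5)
Your proposal is correct and is exactly the intended reasoning: the paper states this lemma without proof, treating it as immediate from the fact that canonical meet-representations are defined dually, and your argument is precisely the formal unwinding of that duality (joins become meets, $J(L)$ corresponds to $M(L^{\op})$, and irredundancy and the antichain-order minimality dualize clause by clause). No gaps; your attention to how the antichain order reverses under passage to $L^{\op}$ is the only point of substance and you handle it correctly.
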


A finite semidistributive lattice admits canonical join-representations for all of its elements \cite[Theorem 2.24]{freese1995free}.  These canonical join-representations often take a very nice form.  One of our main applications of semidistributivity is a description of canonical join-representations and canonical meet-representations of torsion classes (see Theorem~\ref{canonjoinrepn} and Corollary~\ref{canonmeetrepn}).

\subsection{Congruence-uniform lattices}

Let $P$ and $\{0,1\}$ be posets where $\{0,1\}$ is a two element chain with $0 < 1$, and consider the poset $P \times \{0,1\}$ with partial order $(x,i) \le (y,j)$ if $x \le y$ in $P$ and $i \le j$ in $\{0,1\}$.   Now given a closed interval $I=[x,y]$ of $P$, the \textbf{doubling} $P[I]$ is the induced subposet of $P\times\{0,1\}$ with elements
$$P[I]=(P_{\leq y}\times\{0\})\sqcup[(P-P_{\leq y})\cup I]\times\{1\},$$
where $P_{\leq y}=\{z\in P:\ z\leq y\}$.  As shown in \cite{day1992doubling}, if $P$ is a lattice, then $P[I]$ is a lattice.  A finite lattice $L$ is \textbf{congruence-uniform} (or \textbf{bounded}) if there exists a sequence of lattices $L_1,\ldots,L_t$ such that $L_1$ is the one-element lattice, $L_t=L$, and for all $i$, there exists a closed interval $I_i$ of $L_i$ such that $L_{i+1}\cong L_i[I_i]$.

As interval doublings preserve semidistributivity, finite congruence-uniform lattices are always semidistributive.  Congruence-uniform lattices admit other characterizations in terms of lattice congruences \cite{day:congruence}, edge-labelings \cite{reading:lattice}, or as ``bounded'' quotients of free lattices.

\subsection{Polygonal lattices}

A finite lattice is a \textbf{polygon} if it contains exactly two maximal chains and those chains only agree at the bottom and top elements.  A finite lattice $L$ is \textbf{polygonal} if for all $x\in L$:
\begin{itemize}
\item if $y$ and $z$ are distinct elements covering $x$, then $[x,y\vee z]$ is a polygon, and
\item if $y$ and $z$ are distinct elements covered by $x$, then $[y\wedge z,x]$ is a polygon.
\end{itemize}

Given two maximal chains $C,C^{\pr}$ in a lattice $L$, we say $C$ and $C^{\pr}$ differ by a \textbf{polygonal flip} if there is a polygon $[x,y]$ such that $C\cap[\hat{0},x]=C^{\pr}\cap[\hat{0},x]$, $C\cap[y,\hat{1}]=C^{\pr}\cap[y,\hat{1}]$ and $C\cap[x,y]$ and $C^{\pr}\cap[x,y]$ are distinct maximal chains of $[x,y]$.

Our main use of polygonal lattices is the following connectivity result.

\begin{lemma}\cite[Lemma 9-6.3]{ReadingPAB}\label{lem_poly_flip}
Let $L$ be a polygonal lattice.  If $C$ and $C^{\pr}$ are maximal chains of $L$, then there exists a sequence of maximal chains $C=C_0,C_1,\ldots,C_N=C^{\pr}$ such that $C_i$ and $C_{i+1}$ differ by a polygonal flip for all $i$.
\end{lemma}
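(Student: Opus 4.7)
The plan is to prove the lemma by induction on $|L|$. The base cases with $|L|\le 2$ are vacuous, since then $L$ is a chain and has a unique maximal chain. For the inductive step, I would split into two cases based on whether $C\cap C^{\pr}$ contains an interior element. If there is some $x\in C\cap C^{\pr}$ with $\hat 0<x<\hat 1$, then splitting each chain at $x$ produces maximal chains of the intervals $[\hat 0,x]$ and $[x,\hat 1]$; both of these intervals are polygonal lattices (covers, joins, and polygons are inherited by intervals) and strictly smaller than $L$, so the inductive hypothesis connects the pieces separately, and concatenation yields a sequence of flips from $C$ to $C^{\pr}$. Here I also use that a polygonal flip carried out inside a subinterval remains a polygonal flip in $L$, since its defining polygon is a subinterval of $L$ as well.

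The substantive case is $C\cap C^{\pr}=\{\hat 0,\hat 1\}$. Let $c_1$ and $c_1^{\pr}$ denote the atoms of $C$ and $C^{\pr}$, set $y:=c_1\vee c_1^{\pr}$, and apply polygonality of $L$ to the two distinct covers $c_1,c_1^{\pr}$ of $\hat 0$ to conclude that $P:=[\hat 0,y]$ is a polygon with two maximal chains $P_L$ (through $c_1$) and $P_R$ (through $c_1^{\pr}$). The key subclaim I would prove is that $C$ can be flipped to some maximal chain whose initial segment from $\hat 0$ to $y$ is precisely $P_L$, and symmetrically that $C^{\pr}$ can be flipped to one with initial segment $P_R$. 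Granting the subclaim, let $\tilde C$ denote the flipped version of $C$ and apply one additional polygonal flip at the polygon $P$ to replace $P_L$ with $P_R$, producing a chain $\hat C$ whose initial segment is $P_R$. Then $\hat C$ and the analogous $\tilde C^{\pr}$ agree on $[\hat 0,y]$; if $y=\hat 1$ they are already equal, and otherwise $y$ is a shared interior element, so the inductive hypothesis applied to the strictly smaller polygonal lattice $[y,\hat 1]$ connects the upper portions, completing the flip sequence from $C$ to $C^{\pr}$.

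To prove the subclaim, I would extend $P_L$ to a maximal chain $C_L$ of $L$ by appending any maximal chain of $[y,\hat 1]$. Since both $C$ and $C_L$ contain $c_1$, they restrict to maximal chains of the interval $[c_1,\hat 1]$, which is again polygonal and strictly smaller than $L$, and the inductive hypothesis provides a flip sequence between them inside $[c_1,\hat 1]$; this sequence lifts to $L$ by the subinterval observation above. The step I expect to be the main obstacle is precisely this subclaim: a direct approach that tries to deform the initial segment of $C$ into $P_L$ one cover at a time repeatedly invokes polygonality to produce an auxiliary polygon, but the polygonal flip associated to it requires $C$ to already contain a full maximal chain of that polygon, which need not hold. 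The trick in the plan above is to bypass this by relocating the subclaim to the strictly smaller sublattice $[c_1,\hat 1]$ and appealing to the inductive hypothesis there.
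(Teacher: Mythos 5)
Your argument is correct, but note that the paper does not prove this lemma at all: it is quoted verbatim from Reading (Lemma 1-6.3 of the cited chapter), so there is no in-paper proof to compare against. Your induction on $|L|$ is a legitimate self-contained argument and is close in spirit to the standard proof: split off the easy case of a common interior element, and otherwise work with the polygon $[\hat 0, c_1\vee c_1^{\pr}]$ generated by the two bottom covers. The two points worth writing out carefully are (i) the reduction conventions: in the case $C\cap C^{\pr}=\{\hat 0,\hat 1\}$ with $|L|>2$ the atoms $c_1,c_1^{\pr}$ are automatically distinct (otherwise they would be a common interior element, or $L$ would be the two-element chain), and each maximal chain of the polygon contains exactly one of $c_1,c_1^{\pr}$ since the polygon is the union of its two maximal chains; and (ii) the lifting lemma you invoke repeatedly: a closed interval of a polygonal lattice is polygonal (covers, joins, and meets in an interval agree with those in $L$, and the relevant polygons stay inside the interval), and a polygonal flip performed inside an interval, with the rest of the chain held fixed, satisfies the paper's definition of a polygonal flip in $L$ because the defining polygon is also an interval of $L$ and the extended chains agree outside it. Your resolution of the subclaim — replacing a direct deformation of the initial segment of $C$ into $P_L$ by an application of the inductive hypothesis to the strictly smaller interval $[c_1,\hat 1]$ containing both $C$ and the extension $C_L$ of $P_L$ — is exactly the right move and avoids the genuine obstacle you identify; with that, the assembly $C\rightsquigarrow C_L\rightarrow \hat C\rightsquigarrow C_R\rightsquigarrow C^{\pr}$ (using symmetry and reversibility of flip sequences) is sound.
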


\section{Semidistributivity of oriented exchange graphs}\label{semidisoreg}

In this section, we prove that if $Q$ is mutation-equivalent to a Dynkin quiver, then $\overrightarrow{EG}(\widehat{Q})$ is a semidistributive lattice. To do so, we begin by identifying $\overrightarrow{EG}(\widehat{Q})$ with the lattice of functorially finite torsion classes of the cluster-tilted algebra $\Lambda = \Bbbk Q/I$ (see \cite{by14}). After that, we prove that the lattice of torsion classes of any finite dimensional algebra is semidistributive. Since $\Lambda$ is representation-finite, all torsion classes are functorially finite and thus we conclude that $\overrightarrow{EG}(\widehat{Q})$ is semidistributive.

\subsection{Torsion classes and oriented exchange graphs} In this brief section, we recall the definition of torsion classes and the connection between torsion classes and oriented exchange graphs. 

Let $\Lambda$ be a finite dimensional $\Bbbk$-algebra. A full, additive subcategory $\mathcal{C} \subset \Lambda$-mod is \textbf{extension closed} if for any objects $X,Y \in \mathcal{C}$ and $0 \to X \to Z \to Y \to 0$ exact one has $Z \in \mathcal{C}$. We say $\mathcal{C}$ is \textbf{quotient closed} (resp., \textbf{submodule closed}) if for any $X \in \mathcal{C}$ satisfying $X \stackrel{\alpha}{\longrightarrow} Z$ where $\alpha$ is a surjection (resp., $Z \stackrel{\beta}{\longrightarrow} X$ where $\beta$ is an injection), then $Z \in \mathcal{C}$. A full, additive subcategory $\mathcal{T} \subset \Lambda$-mod is called a \textbf{torsion class} if $\mathcal{T}$ is quotient closed and extension closed. Dually, a full, additive subcategory $\mathcal{F} \subset \Lambda$-mod is called a \textbf{torsion-free class} if $\mathcal{F}$ is extension closed and submodule closed. 

Let $\text{tors}(\Lambda)$ (resp., $\text{torsf}(\Lambda)$) denote the poset of torsion classes  (resp., of torsion-free classes) of $\Lambda$ ordered by inclusion. We have the following proposition, which shows that a torsion class of $\Lambda$ uniquely determines a torsion-free class of $\Lambda$ and vice versa.

\begin{proposition}\label{torsbij}
\cite[Proposition 2.1 a)]{iyama.reiten.thomas.todorov:latticestrtors} The maps $$\begin{array}{rcl} \text{tors}(\Lambda) & \stackrel{(-)^\perp}{\longrightarrow} & \text{torsf}(\Lambda)\\ \mathcal{T} & \longmapsto &  \mathcal{T}^\perp := \{X \in \Lambda\text{-mod}: \ \Hom_{\Lambda}(\mathcal{T},X) = 0\}\end{array}$$
and
$$\begin{array}{rcl} \text{torsf}(\Lambda) & \stackrel{^{\perp}(-)}{\longrightarrow} & \text{tors}(\Lambda)\\ \mathcal{F} & \longmapsto &  {}^{\perp}\mathcal{F} := \{X \in \Lambda\text{-mod}: \ \Hom_{\Lambda}(X,\mathcal{F}) = 0\}\end{array}$$ are inverse bijections. 
\end{proposition}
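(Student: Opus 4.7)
The plan is to establish that both maps are well-defined and then that their compositions are the identity, the key engine being a canonical torsion--torsion-free decomposition of each module.

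First, I would check that $(-)^\perp$ sends torsion classes to torsion-free classes. Submodule-closure of $\mathcal{T}^\perp$ is immediate: if $Y \hookrightarrow X$ with $X \in \mathcal{T}^\perp$, any map $T \to Y$ composed with the injection gives zero and hence is itself zero. Extension-closure follows from left-exactness of $\Hom_{\Lambda}(T,-)$ applied to a short exact sequence with outer terms in $\mathcal{T}^\perp$. The dual statements for ${}^\perp(-)$ follow symmetrically from right-exactness considerations for $\Hom_{\Lambda}(-,F)$.

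Second, to show ${}^\perp(\mathcal{T}^\perp) = \mathcal{T}$, the containment $\supseteq$ is immediate from the definitions. For the reverse, given any $M \in \Lambda$-mod, define $tM$ to be the sum of all submodules of $M$ that lie in $\mathcal{T}$; since $\mathcal{T}$ is additive and quotient-closed and $M$ is finite-dimensional, $tM \in \mathcal{T}$. I claim $M/tM \in \mathcal{T}^\perp$: for a map $f \colon T \to M/tM$ with $T \in \mathcal{T}$, pull back along $M \twoheadrightarrow M/tM$ to obtain a short exact sequence $0 \to tM \to N \to T \to 0$ with $N$ a submodule of $M$. By extension-closure $N \in \mathcal{T}$, so $N \subseteq tM$ by maximality, forcing $f=0$. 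Hence if $M \in {}^\perp(\mathcal{T}^\perp)$, then $\Hom_{\Lambda}(M, M/tM)=0$, so the quotient map vanishes and $M = tM \in \mathcal{T}$.

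Third, dually, given a torsion-free class $\mathcal{F}$ and $M \in \Lambda$-mod, define $tM$ as the intersection of all submodules $N \subseteq M$ with $M/N \in \mathcal{F}$. Since $M/(N_1 \cap N_2)$ embeds in $M/N_1 \oplus M/N_2$ and $\mathcal{F}$ is closed under finite direct sums and submodules, one verifies $M/tM \in \mathcal{F}$. Then $tM \in {}^\perp \mathcal{F}$: for $g \colon tM \to F$ with $F \in \mathcal{F}$, set $K = \ker(g)$ so that $tM/K \hookrightarrow F$ lies in $\mathcal{F}$; then the short exact sequence $0 \to tM/K \to M/K \to M/tM \to 0$ has outer terms in $\mathcal{F}$, giving $M/K \in \mathcal{F}$ and hence $tM \subseteq K$ by minimality, so $g=0$. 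Finally, if $M \in ({}^\perp \mathcal{F})^\perp$, then the inclusion $tM \hookrightarrow M$ must vanish because $tM \in {}^\perp \mathcal{F}$, so $tM=0$ and $M = M/tM \in \mathcal{F}$. The main subtlety of the argument is the construction of the two canonical decompositions; once those exist, well-definedness of the maps and the two inversion identities reduce to routine diagram chases.
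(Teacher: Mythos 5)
Your argument is correct, but note that the paper does not prove this statement at all: it is quoted directly from Iyama--Reiten--Thomas--Todorov (Prop.\ 1.1 a)), so there is no in-paper proof to compare against. What you have written is the classical torsion-pair argument going back to Dickson: for a torsion class $\mathcal{T}$ you build the canonical torsion submodule $tM$ (the sum of all submodules of $M$ lying in $\mathcal{T}$, a finite sum since $\Lambda$-modules here are finite dimensional) and show $M/tM\in\mathcal{T}^\perp$, and dually for a torsion-free class $\mathcal{F}$ you take the smallest submodule with quotient in $\mathcal{F}$; the two inversion identities then follow exactly as you say. One small imprecision to clean up: in the step showing $M/tM\in\mathcal{T}^\perp$, the pullback of $M\twoheadrightarrow M/tM$ along $f\colon T\to M/tM$ is \emph{not} a submodule of $M$. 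Either work with the genuine pullback $N=\{(m,t): \pi(m)=f(t)\}$, which sits in $0\to tM\to N\to T\to 0$, lies in $\mathcal{T}$ by extension-closure, and whose image in $M$ is then a submodule in $\mathcal{T}$ (by quotient-closure) containing $\pi^{-1}(\operatorname{im}f)\cap\operatorname{im}$ considerations forcing $\operatorname{im}f=0$; or, more directly, set $N=\pi^{-1}(\operatorname{im}f)\subseteq M$, observe $N/tM\cong\operatorname{im}f$ is a quotient of $T$ and hence in $\mathcal{T}$, conclude $N\in\mathcal{T}$ by extension-closure, so $N\subseteq tM$ and $f=0$. With that repair the proof is complete and self-contained, which is arguably a gain over the paper's bare citation, though it proves nothing beyond what the cited reference already supplies.
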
 

We also have the following useful lemma.

\begin{lemma}\cite[Proposition 2.4]{iyama.reiten.thomas.todorov:latticestrtors}\label{standarddual}
The maps $$\begin{array}{rcl} \text{tors}(\Lambda) & \stackrel{D(-)}{\longrightarrow} & \text{torsf}(\Lambda^{\text{op}})\\ \mathcal{T} & \longmapsto &  D{\mathcal{T}} \end{array}$$
and
$$\begin{array}{rcl} \text{torsf}(\Lambda) & \stackrel{D(-)}{\longrightarrow} & \text{tors}(\Lambda^{\text{op}})\\ \mathcal{F} & \longmapsto &  D\mathcal{F} \end{array}$$ are isomorphisms of lattices where $D(-):= \Hom_\Bbbk(-,\Bbbk)$ is the \textbf{standard duality}. Furthermore, the functor $D((-)^\perp): \text{tors}(\Lambda) \to \text{tors}(\Lambda^{\op})$ is an anti-isomorphism of posets.
\end{lemma}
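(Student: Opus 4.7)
The plan is to verify this in two stages: first that the standard duality $D$ takes torsion classes to torsion-free classes over the opposite algebra (and vice versa) while preserving inclusion, and then to derive the anti-isomorphism by composing with the order-reversing bijection of Proposition~\ref{torsbij}.

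First, I would recall that $D=\Hom_\Lambda(-,\Bbbk)$ is a contravariant exact equivalence $\Lambda\text{-mod}\to\Lambda^{\op}\text{-mod}$ with $D\circ D\cong\mathrm{id}$. Because $D$ is exact and contravariant, it sends every short exact sequence $0\to X\to Z\to Y\to 0$ in $\Lambda\text{-mod}$ to a short exact sequence $0\to DY\to DZ\to DX\to 0$ in $\Lambda^{\op}\text{-mod}$, exchanging surjections with injections. Given $\mathcal{T}\in\text{tors}(\Lambda)$, set $D\mathcal{T}:=\{DM:M\in\mathcal{T}\}$. Extension-closure of $D\mathcal{T}$ follows from extension-closure of $\mathcal{T}$: an extension $0\to DX\to W\to DY\to 0$ with $X,Y\in\mathcal{T}$ dualizes to an extension of $X$ by $Y$, which lies in $\mathcal{T}$, so $W\cong D^2W\in D\mathcal{T}$. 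Submodule-closure follows from quotient-closure: if $N\hookrightarrow DM$ with $M\in\mathcal{T}$, then applying $D$ gives a surjection $M\cong D^2M\twoheadrightarrow DN$, whence $DN\in\mathcal{T}$ and $N\cong D^2N\in D\mathcal{T}$. The symmetric argument shows $D$ carries torsion-free classes to torsion classes, and $D^2\cong\mathrm{id}$ shows the two assignments are mutually inverse. Both maps preserve the inclusion order at the level of isomorphism classes of objects, and an order-isomorphism between lattices is automatically a lattice isomorphism, so we obtain the first claim.

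For the anti-isomorphism claim, Proposition~\ref{torsbij} gives a bijection $(-)^\perp\colon\text{tors}(\Lambda)\to\text{torsf}(\Lambda)$ with inverse ${}^\perp(-)$. This bijection is order-reversing: if $\mathcal{T}_1\subseteq\mathcal{T}_2$, then $\Hom_\Lambda(\mathcal{T}_2,X)=0$ forces $\Hom_\Lambda(\mathcal{T}_1,X)=0$, giving $\mathcal{T}_2^\perp\subseteq\mathcal{T}_1^\perp$. Composing the order-reversing bijection $(-)^\perp$ with the order-preserving lattice isomorphism $D\colon\text{torsf}(\Lambda)\to\text{tors}(\Lambda^{\op})$ produced above yields an order-reversing bijection $D((-)^\perp)\colon\text{tors}(\Lambda)\to\text{tors}(\Lambda^{\op})$, i.e.\ an anti-isomorphism of posets; equivalently, $\text{tors}(\Lambda^{\op})\cong\text{tors}(\Lambda)^{\op}$ as lattices.

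No step is genuinely difficult: all the content is packed into the fact that $D$ is an exact contravariant equivalence with $D^2\cong\mathrm{id}$. The only point requiring care is carrying out the arguments for extension-closure and sub/quotient-closure in the correct direction on the exact sequences produced by $D$, which is a routine unwinding once one keeps track of variance. Everything else is the formal observation that composing an order-preserving bijection with an order-reversing bijection yields an anti-isomorphism.
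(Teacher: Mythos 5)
Your argument is correct. Note that the paper does not prove this lemma at all: it is quoted from \cite[Prop.~1.4]{iyama.reiten.thomas.todorov:latticestrtors}, so there is no internal proof to compare against; your write-up supplies the standard argument one would find in that reference, resting on the fact that $D$ is an exact contravariant equivalence with $D^2\cong\mathrm{id}$ (and implicitly correcting the paper's typo: the standard duality is $\Hom_\Bbbk(-,\Bbbk)$, not $\Hom_\Lambda(-,\Bbbk)$). Two cosmetic points you could make explicit: $D\mathcal{T}$ should be taken closed under isomorphism and finite direct sums (immediate since $D$ is additive), and to conclude ``anti-isomorphism of posets'' one should observe that the inverse ${}^{\perp}(D^{-1}(-))$ is likewise order-reversing, which follows from the same computation you give for $(-)^{\perp}$; neither is a genuine gap.
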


The lattices $\text{tors}(\Lambda)$ and $\text{torsf}(\Lambda)$ have the following description of their meet and join operations. In Lemma~\ref{join}, we give an alternative description of the join operation. 

\begin{proposition}\cite[Proposition 2.3]{iyama.reiten.thomas.todorov:latticestrtors}\label{meetandjointors} Let $\Lambda$ be a finite dimensional algebra. Then $\text{tors}(\Lambda)$ and $\text{torsf}(\Lambda)$ are complete lattices. The join and meet operations are as follows.
\begin{itemize}
\item[a)] Let $\{\mathcal{T}_i\}_{i \in I} \subset \text{tors}(\Lambda)$ be a collection of torsion classes. Then we have $\bigwedge_{i\in I}\mathcal{T}_i = \bigcap_{i \in I} \mathcal{T}_i$ and $\bigvee_{i\in I}\mathcal{T}_i = {}^{\perp}\left(\bigcap_{i \in I} \mathcal{T}_i^\perp\right)$.
\item[b)] Let $\{\mathcal{F}_i\}_{i \in I} \subset \text{torsf}(\Lambda)$ be a collection of torsion-free classes. Then we have $\bigwedge_{i\in I}\mathcal{F}_i = \bigcap_{i \in I} \mathcal{F}_i$ and $\bigvee_{i\in I}\mathcal{F}_i = \left(\bigcap_{i \in I} {}^{\perp}\mathcal{F}_i\right)^{\perp}$.
\end{itemize}

\end{proposition}

An important subset of $\text{tors}(\Lambda)$ is the set of functorially finite torsion classes, denoted $\text{f-tors}(\Lambda)$. By definition, $\mathcal{T} \in \text{tors}(\Lambda)$ is a \textbf{functorially finite} torsion class if there exists $X \in \Lambda$-mod such that $$\mathcal{T} = \text{Fac}(X) := \{Y \in \Lambda\text{-mod}: \exists X^m \twoheadrightarrow Y \text{ for some } m \in \mathbb{N}\}.$$ Dually, a torsion-free class $\mathcal{F} \in \text{tors}(\Lambda)$ is \textbf{functorially finite} if there exists $X \in \Lambda$-mod such that $$\mathcal{F} = \text{Sub}(X) := \{Y \in \Lambda\text{-mod}: \exists Y \hookrightarrow X^m \text{ for some } m \in \mathbb{N}\}.$$ We let $\text{f-torsf}(\Lambda)$ denote the set of functorially finite torsion-free classes of $\Lambda.$  The sets $\text{f-tors}(\Lambda)$ and $\text{f-torsf}(\Lambda)$ are clearly partially-ordered by inclusion. The bijection given in Proposition~\ref{torsbij} restricts to a bijection $\text{f-tors}(\Lambda) \to \text{f-torsf}(\Lambda).$

Now, let $\Lambda = \Bbbk Q/I$ be a cluster-tilted algebra. The next result, which appears in \cite{by14} in much greater generality, shows that oriented exchange graphs can be studied using functorially finite torsion classes of $\Lambda$.

\begin{proposition}\label{oregequalsftors}
Let $Q$ be a quiver that is mutation-equivalent to a Dynkin quiver and let $\Lambda = \Bbbk Q/I$ denote the associated cluster-tilted algebra. Then the Hasse diagram of $\overrightarrow{EG}(\widehat{Q})$ is isomorphic to that of $\text{f-tors}(\Lambda)$. Using this isomorphism, we may regard $\overrightarrow{EG}(\widehat{Q})$ as a poset.
\end{proposition}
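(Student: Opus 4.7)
The proposition is essentially a specialization of results of Br\"ustle--Yang, so my plan is to assemble the bijection in three stages and then verify that the poset structure matches.

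\textbf{Step 1 (vertices).} First I would pass from the exchange graph to cluster-tilting objects. Since $Q$ is mutation-equivalent to a Dynkin quiver $Q'$, let $\Ccal_{Q'}$ be the cluster category of $Q'$. Vertices of $EG(\widehat{Q})$ are frozen-isomorphism classes of quivers $R\in \text{Mut}(\widehat{Q})$; to each such $R$ one attaches a basic cluster-tilting object $T_R\in\Ccal_{Q'}$, with the quiver of $\End_{\Ccal_{Q'}}(T_R)^{\op}$ recovering the mutable part of $R$ (and the frozen vertices encoding the chosen initial cluster-tilting object). Next I would invoke the Adachi--Iyama--Reiten correspondence: for the cluster-tilted algebra $\Lambda=\End_{\Ccal_{Q'}}(T)^{\op}$ associated to a fixed initial seed, the functor $\Hom_{\Ccal_{Q'}}(T,-)$ sets up a bijection between basic cluster-tilting objects of $\Ccal_{Q'}$ and basic support $\tau$-tilting $\Lambda$-modules. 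Finally, the map $M\mapsto \text{Fac}(M)$ gives a bijection from support $\tau$-tilting modules to functorially finite torsion classes. Composing these yields a bijection $R\mapsto \Tcal_R$ from vertices of $\overrightarrow{EG}(\widehat{Q})$ to elements of $\text{f-tors}(\Lambda)$.

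\textbf{Step 2 (covers).} Edges of $EG(\widehat{Q})$ correspond to single mutations $\mu_k$ at mutable vertices. Under the correspondence in Step~1, mutation of the quiver $R$ at $k$ becomes mutation of the summand $T_k$ of the corresponding cluster-tilting object, which in turn translates to $\tau$-tilting mutation of support $\tau$-tilting modules. By Adachi--Iyama--Reiten, $\tau$-tilting mutation is exactly the operation that produces covering relations in $\text{f-tors}(\Lambda)$. So the edges of $EG(\widehat{Q})$ are in bijection with the covering relations of $\text{f-tors}(\Lambda)$.

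\textbf{Step 3 (orientation).} Here is where the main content lies: I must check that a green mutation $R\to \mu_k R$ corresponds to an inclusion $\Tcal_R\subsetneq \Tcal_{\mu_k R}$. The bridge is Proposition~\ref{c-vecbij} together with the definition of green/red vertices. If $k$ is green in $R$, then the $k$th $\textbf{c}$-vector $\textbf{c}_k^R$ is positive, hence by Proposition~\ref{c-vecbij} equals $\underline{\dim}(V_k)$ for some indecomposable $V_k\in\Lambda\text{-mod}$. A direct computation (identifying $V_k$ with the brick produced by mutation of the current $\tau$-rigid pair) shows that $V_k$ lies in $\Tcal_{\mu_k R}\setminus \Tcal_R$, so $\Tcal_R\subsetneq \Tcal_{\mu_k R}$. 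Conversely, for a red vertex the corresponding brick lies in $\Tcal_R\setminus\Tcal_{\mu_k R}$, giving the opposite inclusion. Since all Dynkin-mutation cluster-tilted algebras are representation-finite, every torsion class is functorially finite, and this produces the claimed poset isomorphism.

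\textbf{Main obstacle.} The nontrivial step is Step~3: matching the combinatorial sign of the $\textbf{c}$-vector (the data defining the orientation in $\overrightarrow{EG}(\widehat{Q})$) with the direction of the $\tau$-tilting mutation in the lattice of torsion classes. All of this is carried out carefully in \cite{by14}; for the current paper it suffices to cite that reference after pointing out that representation-finiteness of $\Lambda$ (which follows from the Dynkin-mutation hypothesis and [Buan--Marsh--Reiten, Cor. 2.4]) makes $\text{f-tors}(\Lambda)=\text{tors}(\Lambda)$, so no subtlety about functorial finiteness arises.
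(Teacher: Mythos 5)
Your proposal is sound, and it matches the paper's treatment: the paper gives no independent proof of Proposition~\ref{oregequalsftors}, simply citing \cite{by14} (where the identification of the oriented exchange graph with the poset of functorially finite torsion classes is established in greater generality), together with representation-finiteness ensuring $\text{f-tors}(\Lambda)=\text{tors}(\Lambda)$. Your sketch via cluster-tilting objects, the Adachi--Iyama--Reiten correspondences, and the $\textbf{c}$-vector/green-mutation sign check is exactly the standard chain underlying that citation, and your deferral to \cite{by14} for the orientation-matching step is precisely what the paper does.
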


\begin{theorem}\label{semidis}
If $\Lambda$ is any finite dimensional $\Bbbk$-algebra, then the lattice $\text{tors}(\Lambda)$ is semidistributive. In particular, if $\Lambda$ is a finite dimensional $\Bbbk$-algebra of finite representation type, then $\text{f-tors}(\Lambda)$ is a semidistributive lattice.
\end{theorem}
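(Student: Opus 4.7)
The plan is to prove meet-semidistributivity of $\text{tors}(\Lambda)$ directly and then deduce join-semidistributivity via the standard duality of Lemma~\ref{standarddual}. The essential tool will be Thomas's explicit description of the join (the forward-referenced Lemma~\ref{join}), whose upshot should be that the ``non-$\mathcal{T}_1$ part'' of any module in $\mathcal{T}_1 \vee \mathcal{T}_2$ lies in $\mathcal{T}_2$: concretely, if $M \in \mathcal{T}_1 \vee \mathcal{T}_2$ then the canonical quotient $M/t_{\mathcal{T}_1}(M)$ belongs to $\mathcal{T}_2$, where $t_{\mathcal{T}_1}(M)$ denotes the largest submodule of $M$ in $\mathcal{T}_1$. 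Without this kind of hands-on handle on the join, the abstract formula $\bigvee \mathcal{T}_i = {}^\perp(\bigcap \mathcal{T}_i^\perp)$ from Proposition~\ref{meetandjointors} is too opaque to manipulate directly.

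For meet-semidistributivity, I would assume $\mathcal{T}_1 \cap \mathcal{T}_3 = \mathcal{T}_2 \cap \mathcal{T}_3$ and prove the non-trivial inclusion $(\mathcal{T}_1 \vee \mathcal{T}_2) \cap \mathcal{T}_3 \subseteq \mathcal{T}_1 \cap \mathcal{T}_3$. Given $M$ in the left-hand side, I would consider the canonical torsion sequence
\[
0 \to t_{\mathcal{T}_1}(M) \to M \to N \to 0, \qquad N := M/t_{\mathcal{T}_1}(M).
\]
Then $N$ lies in $\mathcal{T}_3$ (quotient closure), in $\mathcal{T}_1^{\perp}$ (by construction of $t_{\mathcal{T}_1}$), and in $\mathcal{T}_2$ (Thomas's formula). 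Hence $N \in \mathcal{T}_2 \cap \mathcal{T}_3 = \mathcal{T}_1 \cap \mathcal{T}_3 \subseteq \mathcal{T}_1$, yet simultaneously $N \in \mathcal{T}_1^{\perp}$; since $\mathcal{T}_1 \cap \mathcal{T}_1^{\perp} = 0$, this forces $N = 0$, so $M = t_{\mathcal{T}_1}(M) \in \mathcal{T}_1$, as required.

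For join-semidistributivity, I would invoke Lemma~\ref{standarddual}: the anti-isomorphism $\text{tors}(\Lambda) \cong \text{tors}(\Lambda^{\op})^{\op}$ swaps meets and joins, so applying the meet-semidistributivity statement already proven to $\Lambda^{\op}$ yields join-semidistributivity of $\text{tors}(\Lambda)$. The ``in particular'' clause then follows because, when $\Lambda$ has finite representation type, every torsion class $\mathcal{T}$ equals $\text{Fac}(X)$ with $X$ the direct sum of its (finitely many) indecomposable objects, so $\text{tors}(\Lambda) = \text{f-tors}(\Lambda)$ and the first conclusion transfers verbatim.

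The main obstacle will be leveraging Thomas's formula cleanly. If Lemma~\ref{join} describes $\mathcal{T}_1 \vee \mathcal{T}_2$ only iteratively --- for instance, as the union over $n$ of modules admitting length-$n$ filtrations whose layers alternate between $\mathcal{T}_1$ and $\mathcal{T}_2$ --- then the single application of the canonical sequence above must be promoted to an induction on the depth of the filtration, peeling off a $\mathcal{T}_1$-torsion layer at each step and re-invoking the quotient closure of $\mathcal{T}_3$. Beyond this, the argument rests only on the standard short exact sequence attached to a torsion pair and on the standard duality $D$.
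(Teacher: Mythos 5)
Your global strategy coincides with the paper's: reduce to meet-semidistributivity, use Thomas's description of the join as $\mathcal{F}ilt(\mathcal{T}_1,\mathcal{T}_2)$, obtain join-semidistributivity from the anti-isomorphism of Lemma~\ref{standarddual} applied to $\Lambda^{\op}$, and use $\text{tors}(\Lambda)=\text{f-tors}(\Lambda)$ in the representation-finite case. But the step your argument leans on is false: it is not true that $M\in\mathcal{T}_1\vee\mathcal{T}_2$ forces $M/t_{\mathcal{T}_1}(M)\in\mathcal{T}_2$. Take $\Lambda=\Bbbk(1\to 2)$, $\mathcal{T}_1=\text{add}(S_1)$, $\mathcal{T}_2=\text{add}(S_2)$, and $M$ the indecomposable with $\underline{\dim}(M)=(1,1)$, top $S_1$ and socle $S_2$. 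Then $M\in\mathcal{F}ilt(\mathcal{T}_1,\mathcal{T}_2)=\mathcal{T}_1\vee\mathcal{T}_2$ via $0\to S_2\to M\to S_1\to 0$, yet $t_{\mathcal{T}_1}(M)=0$ (since $\Hom_\Lambda(S_1,M)=0$) and $M\notin\mathcal{T}_2$. So the single application of the canonical torsion sequence does not yield $N\in\mathcal{T}_2$, and more generally $\mathcal{T}_1^{\perp}\cap(\mathcal{T}_1\vee\mathcal{T}_2)\not\subseteq\mathcal{T}_2$.

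Your fallback ("promote to an induction, peeling off a $\mathcal{T}_1$-torsion layer at each step and re-invoking quotient closure of $\mathcal{T}_3$") is the right instinct but, as described, does not close the gap: after passing to $N=M/t_{\mathcal{T}_1}(M)$, the bottom layer of a $\mathcal{F}ilt$-filtration of $N$ does lie in $\mathcal{T}_2$, but it is a submodule of a quotient of $M$, hence need not lie in $\mathcal{T}_3$, so the hypothesis $\mathcal{T}_1\wedge\mathcal{T}_3=\mathcal{T}_2\wedge\mathcal{T}_3$ cannot be applied to it and nothing pushes it, or $N$, into $\mathcal{T}_1$. The induction that actually works (and is the paper's) runs over successive quotients rather than torsion submodules: for $X\in(\mathcal{T}_1\vee\mathcal{T}_2)\wedge\mathcal{T}_3$ with a filtration $0=X_0\subset\cdots\subset X_n=X$ whose layers lie in $\mathcal{T}_1$ or $\mathcal{T}_2$, one shows by induction on $j$ that $X/X_{n-j}\in\mathcal{T}_1\wedge\mathcal{T}_2$; each $X/X_{n-j}$ is a quotient of $X$, hence lies in $\mathcal{T}_3$, the exact sequence $0\to X_{n-j}/X_{n-j-1}\to X/X_{n-j-1}\to X/X_{n-j}\to 0$ together with extension-closure places $X/X_{n-j-1}$ in $\mathcal{T}_1$ or $\mathcal{T}_2$, and the hypothesis then upgrades this to membership in $\mathcal{T}_1\wedge\mathcal{T}_2$; at $j=n$ one gets $X\in\mathcal{T}_1\wedge\mathcal{T}_3$. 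Your duality step and the representation-finite remark are fine as stated, but the core of the meet-semidistributivity argument needs this repair.
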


\begin{proof}
It is enough to show that $\text{tors}(\Lambda)$  is \textbf{meet-semidistributive} (i.e., for any $\mathcal{T}_1, \mathcal{T}_2, \mathcal{T}_3 \in \text{tors}(\Lambda)$ satisfying $\mathcal{T}_1 \wedge \mathcal{T}_3 = \mathcal{T}_2 \wedge \mathcal{T}_3$ we have that $(\mathcal{T}_1 \vee \mathcal{T}_2)\wedge \mathcal{T}_3 = \mathcal{T}_1 \wedge \mathcal{T}_3$) since $\text{tors}(\Lambda)$ is {join-semidistributive} if and only if $\text{tors}(\Lambda^\text{op})$ is meet-semidistributive. This is proved in the next section (see Lemma~\ref{meetsemi}).

It is well-known that $\text{f-tors}(\Lambda) = \text{tors}(\Lambda)$ holds when $\Lambda$ is a finite dimensional, representation-finite $\Bbbk$-algebra. Thus the second assertion holds. 
\end{proof}

\begin{remark}
In \cite[Theorem 1.2]{dij}, finite dimensional algebras $\Lambda$ satisfying $\text{tors}(\Lambda) = \text{f-tors}(\Lambda)$ are shown to be exactly those algebras that are $\tau$-\textbf{tilting finite} algebras. That is, algebras $\Lambda$ with only finitely many $\tau$\textbf{-rigid modules} (i.e., modules $M$ satisfying $\Hom_\Lambda(M, \tau M) = 0$). Additionally, in \cite[Theorem 1.2]{iyama.reiten.thomas.todorov:latticestrtors} it is shown that $\text{f-tors}(\Lambda)$ is a complete lattice if and only if $\Lambda$ is a $\tau$-tilting finite algebra.
\end{remark}

\begin{corollary}
Let $Q$ be a quiver that is mutation-equivalent to a Dynkin quiver. Then $\overrightarrow{EG}(\widehat{Q})$ is a semidistributive lattice.
\end{corollary}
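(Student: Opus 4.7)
The plan is to obtain this corollary as an immediate consequence of Theorem~\ref{semidis} combined with the identification of $\overrightarrow{EG}(\widehat{Q})$ with a lattice of torsion classes. No new lattice-theoretic work should be needed.

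First I would introduce the cluster-tilted algebra $\Lambda = \Bbbk Q/I$ associated to $Q$. Since $Q$ is mutation-equivalent to a Dynkin quiver, the path algebra of that Dynkin quiver is representation-finite, and so by the remark in Section~\ref{clusttilt} (following the definition of a cluster-tilted algebra) the algebra $\Lambda$ is itself representation-finite. In particular $\Lambda$ is a finite dimensional $\Bbbk$-algebra of finite representation type.

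Next I would invoke Proposition~\ref{oregequalsftors} to identify $\overrightarrow{EG}(\widehat{Q})$ with $\text{f-tors}(\Lambda)$ as posets, so that whatever lattice-theoretic properties hold for $\text{f-tors}(\Lambda)$ automatically transfer to $\overrightarrow{EG}(\widehat{Q})$. Then Theorem~\ref{semidis}, applied to the representation-finite algebra $\Lambda$, tells us that $\text{f-tors}(\Lambda)$ is a semidistributive lattice (the theorem explicitly states this conclusion under the finite-representation-type hypothesis, since in that case $\text{f-tors}(\Lambda) = \text{tors}(\Lambda)$). Transporting the lattice structure along the poset isomorphism finishes the argument.

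There is essentially no obstacle here — the hard work was done in Theorem~\ref{semidis}, whose proof is deferred to the next section via Lemma~\ref{meetsemi}. The only point that warrants a brief remark is that Proposition~\ref{oregequalsftors} is stated as a poset isomorphism, but this is enough because the lattice operations on $\text{f-tors}(\Lambda)$ (joins and meets from Proposition~\ref{meetandjointors}) transport directly, so $\overrightarrow{EG}(\widehat{Q})$ acquires the semidistributive lattice structure.
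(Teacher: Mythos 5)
Your proposal is correct and follows essentially the same route as the paper: identify $\overrightarrow{EG}(\widehat{Q})$ with the (functorially finite) torsion classes of the representation-finite cluster-tilted algebra $\Lambda$ via Proposition~\ref{oregequalsftors}, use $\text{f-tors}(\Lambda)=\text{tors}(\Lambda)$, and apply Theorem~\ref{semidis}. Your remark that the poset isomorphism suffices to transport the lattice structure is a fine (if implicit in the paper) point, but nothing substantive differs.
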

\begin{proof}
Since $Q$ defines a representation-finite cluster-tilted algebra $\Lambda$, we know that $\text{f-tors}(\Lambda) = \text{tors}(\Lambda).$ By Proposition~\ref{oregequalsftors}, we have that $\overrightarrow{EG}(\widehat{Q}) \cong \text{tors}(\Lambda).$ By Theorem~\ref{semidis}, we now have that $\overrightarrow{EG}(\widehat{Q})$ is semidistributive.
\end{proof}

\begin{example}\label{torsexample}
Let $Q$ be the quiver appearing in Example~\ref{stringsA3}. Note that $Q = Q(3)$. We show the Auslander-Reiten quiver of the cluster-tilted algebra $\Lambda = \Bbbk Q(3)/\langle \alpha_1\alpha_2: \ \alpha_i \in Q(3)_1\rangle$ below (see Figure~\ref{arQ3}) and use it to describe the oriented exchange graph of $Q$ as the lattice of torsion classes and torsion-free classes of $\Lambda$. Any $\mathcal{T} \in \text{tors}(\Lambda)$ or $\mathcal{F} \in \text{torsf}(\Lambda)$ is \textbf{additively generated} (i.e., a full, additive subcategory $\mathcal{A}$ of $\Lambda$-mod is \textbf{additively generated} if $\mathcal{A} =\text{add}(\oplus_{i \in [k]}M(w_i))$ for some finite subset $\{M(w_i)\}_{i \in [k]} \subset \text{ind}(\Lambda\text{-mod})$ where $\text{add}(\oplus_{i \in [k]}M(w_i))$ is the smallest full, additive subcategory closed under direct summands of $\Lambda$-mod that contains $\{M(w_i)\}_{i \in [k]}$) so $\mathcal{T}$ and $\mathcal{F}$ are completely determined by the set of indecomposable modules they contain. Using this fact, we show torsion classes of $\Lambda$ (resp., torsion-free classes of $\Lambda$) in blue (resp., red). For example, $\mathcal{T} = \text{add}(X(3,2)\oplus X(2,1))$ and its corresponding torsion-free class $\mathcal{F} = \text{add}(X(1,1)\oplus X(1,2) \oplus X(3,1))$ are depicted in Figure~\ref{add}. We show all of the torsion classes and torsion-free classes of $\Lambda$ in Figure~\ref{A3tors}.

\begin{figure}[h]
$$\begin{array}{rcl}
\raisebox{-.5in}{$\Gamma(\Lambda\text{-mod})$}
& \raisebox{-.5in}{$=$} & \raisebox{.1in}{$\begin{xy} 0;<1pt,0pt>:<0pt,-1pt>:: 
(0,60) *+{X(3,1)} ="0",
(60,60) *+{X(2,1)} ="1",
(120,60) *+{X(1,1)} ="2",
(180,60) *+{X(3,1)} ="3",
(30,30) *+{X(3,2)} ="5",
(90,30) *+{X(2,2)} ="8",
(150,30) *+{X(1,2)} ="10",
"1", {\ar^{\tau}@{-->}"0"},
"0", {\ar@{^{(}->}"5"},
"2", {\ar^\tau@{-->}"1"},
"5", {\ar@{->>}"1"},
"1", {\ar@{^{(}->}"8"},
"3", {\ar^\tau@{-->}"2"},
"8", {\ar@{->>}"2"},
"2", {\ar@{^{(}->}"10"},
"10", {\ar@{->>}"3"},
\end{xy}$}\end{array}$$
\caption{The Auslander-Reiten quiver of $\Lambda$.}\label{arQ3}
\end{figure}

\begin{figure}[h]$$\includegraphics[scale=1.25]{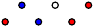}$$\caption{$\mathcal{T} = \text{add}(X(3,2)\oplus X(2,1))$ and $\mathcal{F} = \text{add}(X(1,1)\oplus X(1,2) \oplus X(3,1))$}\label{add}\end{figure}  

\begin{figure}[h]
$$\begin{array}{cccc}
\includegraphics[scale=1.3]{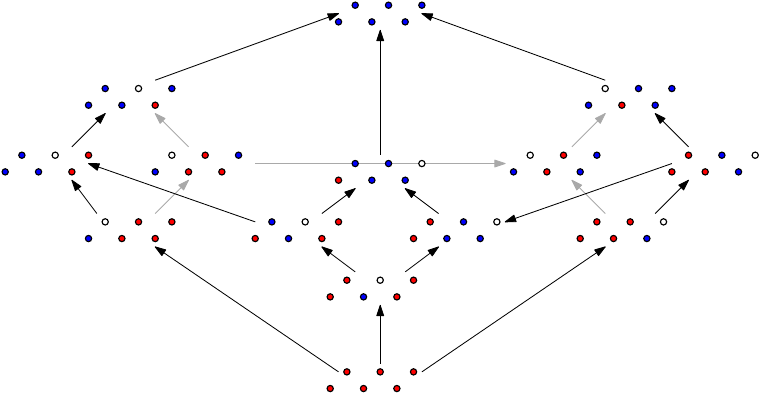}
\end{array}$$
\caption{The oriented exchange graph of $Q(3)$ modeled using $\text{tors}(\Lambda)$ and $\text{torsf}(\Lambda)$.}
\label{A3tors}
\end{figure}

\end{example}

\subsection{Meet-semidistributivity of $\text{tors}(\Lambda)$}\label{Sec_42} In this section, we prove that the lattice of torsion classes of a finite dimensional $\Bbbk$-algebra $\Lambda$ is meet-semidistributive. As a preliminary step, we give an explicit description of the join of two torsion classes (see Lemma~\ref{join}). We thank Hugh Thomas for mentioning this description of the join to us \cite{hthomas}.

\begin{lemma}\label{join}
If $\mathcal{T}, \mathcal{U} \in \text{tors}(\Lambda)$. Then $$\mathcal{T}\vee \mathcal{U} = \mathcal{F}ilt(\mathcal{T}\cup\mathcal{U})$$ 
where $\mathcal{F}ilt(\mathcal{T}\cup\mathcal{U})$ is defined as the subcategory of all $\Lambda$-modules $X$ with a filtration $0 = X_0 \subset X_1 \subset \cdots \subset X_n = X$ with the property that $X_j/X_{j-1}$ belongs to $\mathcal{T}$ or $\mathcal{U}$ for any $j \in [n]$.
\end{lemma}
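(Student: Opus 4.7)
The plan is to show that $\mathcal{F}ilt(\mathcal{T},\mathcal{U})$ is itself a torsion class containing both $\mathcal{T}$ and $\mathcal{U}$, and that any torsion class containing $\mathcal{T} \cup \mathcal{U}$ must contain $\mathcal{F}ilt(\mathcal{T},\mathcal{U})$; together these identify $\mathcal{F}ilt(\mathcal{T},\mathcal{U})$ as the join. That $\mathcal{F}ilt(\mathcal{T},\mathcal{U})$ contains $\mathcal{T}$ and $\mathcal{U}$ is immediate from the trivial filtration $0 \subset X$ for $X \in \mathcal{T}$ or $X \in \mathcal{U}$, so the content is in the closure properties and the minimality.

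For extension-closure, given a short exact sequence $0 \to X \to Y \to Z \to 0$ with $X,Z \in \mathcal{F}ilt(\mathcal{T},\mathcal{U})$, I would concatenate a filtration $0 = X_0 \subset \cdots \subset X_m = X$ of $X$ with the pullbacks along $Y \twoheadrightarrow Z$ of a filtration $0 = Z_0 \subset \cdots \subset Z_n = Z$ of $Z$. This gives a chain $0 = X_0 \subset \cdots \subset X_m = X \subset Y_1 \subset \cdots \subset Y_n = Y$ whose successive quotients are either the $X_j/X_{j-1}$ or are isomorphic to $Z_k/Z_{k-1}$, so each lies in $\mathcal{T}$ or $\mathcal{U}$.

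For quotient-closure, given $X \in \mathcal{F}ilt(\mathcal{T},\mathcal{U})$ with filtration $0 = X_0 \subset \cdots \subset X_n = X$ and a surjection $f : X \twoheadrightarrow Y$, I would define $Y_j := f(X_j)$, so that $0 = Y_0 \subseteq Y_1 \subseteq \cdots \subseteq Y_n = Y$ and $Y_j/Y_{j-1}$ is a quotient of $X_j/X_{j-1}$. Since $\mathcal{T}$ and $\mathcal{U}$ are each quotient-closed, each nontrivial factor lies in $\mathcal{T}$ or $\mathcal{U}$; discarding repeats in the chain yields the desired filtration of $Y$. This step is the one requiring the most care, since it is essential that the pushed-forward filtration stays inside $\mathcal{T} \cup \mathcal{U}$, which is exactly where quotient-closure of each torsion class is used.

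Finally, to see this is the \emph{smallest} torsion class containing $\mathcal{T}$ and $\mathcal{U}$, suppose $\mathcal{V} \in \text{tors}(\Lambda)$ satisfies $\mathcal{T} \cup \mathcal{U} \subseteq \mathcal{V}$. For any $X \in \mathcal{F}ilt(\mathcal{T},\mathcal{U})$ with filtration $0 = X_0 \subset X_1 \subset \cdots \subset X_n = X$, induction on $j$ shows $X_j \in \mathcal{V}$: the short exact sequence $0 \to X_{j-1} \to X_j \to X_j/X_{j-1} \to 0$ has its outer terms in $\mathcal{V}$ (the quotient by hypothesis, the submodule by induction), so $X_j \in \mathcal{V}$ by extension-closure. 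Thus $X = X_n \in \mathcal{V}$, proving $\mathcal{F}ilt(\mathcal{T},\mathcal{U}) \subseteq \mathcal{V}$. Combined with the previous two paragraphs, $\mathcal{F}ilt(\mathcal{T},\mathcal{U})$ is the least element of $\text{tors}(\Lambda)$ above both $\mathcal{T}$ and $\mathcal{U}$, i.e.\ $\mathcal{T} \vee \mathcal{U}$.
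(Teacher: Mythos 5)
Your proposal is correct, and its overall skeleton (show $\mathcal{F}ilt(\mathcal{T},\mathcal{U})$ is a torsion class containing $\mathcal{T}$ and $\mathcal{U}$, then show it lies in every torsion class containing both) is the same as the paper's; the interesting difference is in how the two key closure properties are handled. For quotient closure you and the paper do essentially the same thing: your pushed-forward filtration $Y_j = f(X_j)$ is the paper's chain $(X_j+K)/K$ with $K=\ker f$, both resting on quotient-closedness of $\mathcal{T}$ and $\mathcal{U}$. For extension closure, however, the paper argues by contradiction with a counterexample $Z$ of minimal length, repeatedly producing intermediate submodules $X \subsetneq Z^{(1)} \subsetneq Z^{(2)} \subsetneq \cdots$ and invoking the already-established quotient-closedness of $\mathcal{F}ilt(\mathcal{T},\mathcal{U})$, whereas you simply concatenate a filtration of the sub with the preimages $Y_k=\pi^{-1}(Z_k)$ of a filtration of the quotient, using $Y_k/Y_{k-1}\cong Z_k/Z_{k-1}$. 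Your argument is shorter, more standard, and logically lighter (it does not feed quotient-closedness into extension-closedness), at the cost of nothing. Your minimality step is also slightly more careful than the paper's: the paper exhibits a single extension $0\to X_1\to X\to X/X_{n-1}\to 0$, which as written only treats filtrations of length at most two, while your induction on the filtration handles the general case explicitly. Two small housekeeping remarks: you should note that closure under finite direct sums (needed for $\mathcal{F}ilt(\mathcal{T},\mathcal{U})$ to be an additive subcategory, which the paper checks separately) follows at once from your extension-closure argument applied to split sequences, and that in the concatenated chain repeats or zero factors can be discarded just as you do in the quotient-closure step.
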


Lemma~\ref{join} follows from the general fact that if $\mathcal{C}$ is a class of $\Lambda$-modules which is quotient closed, then the category $\mathcal{F}ilt(\mathcal{C})$ belongs to $\text{tors}(\Lambda)$, which was observed in \cite[Proposition 3.3]{dij}. We are grateful to an anonymous referee for bringing this to our attention. We now complete the proof that $\text{tors}(\Lambda)$ is meet-semidistributive using Lemma~\ref{join}.

\begin{lemma}\label{meetsemi}
The lattice tors($\Lambda$) is meet-semidistributive.
\end{lemma}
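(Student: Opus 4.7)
The plan is to reduce to proving the nontrivial inclusion $(\mathcal{T}_1\vee\mathcal{T}_2)\cap\mathcal{T}_3\subseteq\mathcal{T}_1\cap\mathcal{T}_3$, since the reverse containment is immediate from $\mathcal{T}_1\subseteq\mathcal{T}_1\vee\mathcal{T}_2$. I will use the description $\mathcal{T}_1\vee\mathcal{T}_2=\mathcal{F}ilt(\mathcal{T}_1,\mathcal{T}_2)$ supplied by Lemma~\ref{join} and induct on the length $\ell(X)$ of a module $X$ in the left-hand side. Write $\mathcal{V}:=\mathcal{T}_1\cap\mathcal{T}_3=\mathcal{T}_2\cap\mathcal{T}_3$ for the common meet; the base case $X=0$ is trivial.

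For the inductive step, given $X\in(\mathcal{T}_1\vee\mathcal{T}_2)\cap\mathcal{T}_3$ with $\ell(X)>0$, Lemma~\ref{join} produces a strict filtration $0=X_0\subsetneq X_1\subsetneq\cdots\subsetneq X_n=X$ whose successive quotients lie in $\mathcal{T}_1\cup\mathcal{T}_2$. The quotient $X/X_1$ then lies in $(\mathcal{T}_1\vee\mathcal{T}_2)\cap\mathcal{T}_3$ (both are quotient-closed, being torsion classes) and has strictly smaller length, so the induction hypothesis yields $X/X_1\in\mathcal{V}$.

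The key move is to exploit the hypothesis $\mathcal{T}_1\cap\mathcal{T}_3=\mathcal{T}_2\cap\mathcal{T}_3$ twice: it forces $X/X_1$ to lie simultaneously in $\mathcal{T}_1$ and in $\mathcal{T}_2$. Since $X_1\in\mathcal{T}_i$ for some $i\in\{1,2\}$, both outer terms of the short exact sequence $0\to X_1\to X\to X/X_1\to 0$ belong to that same $\mathcal{T}_i$, and extension-closure gives $X\in\mathcal{T}_i$. Combined with the assumption $X\in\mathcal{T}_3$, this places $X$ in $\mathcal{T}_i\cap\mathcal{T}_3=\mathcal{V}$, completing the inductive step.

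The main conceptual obstacle is that $X_1$ is a submodule of $X\in\mathcal{T}_3$, and torsion classes are not closed under submodules, so one has no direct route to $X_1\in\mathcal{T}_3$ and hence cannot apply the induction hypothesis to $X_1$ itself. The strategy above sidesteps this by inducting only on the quotient $X/X_1$, which is automatically in $\mathcal{T}_3$ by quotient-closure, and then leveraging the symmetry in the meet-semidistributive hypothesis to make the extension-closure step go through no matter which of $\mathcal{T}_1$ or $\mathcal{T}_2$ happens to contain $X_1$.
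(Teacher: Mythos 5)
Your proof is correct and follows essentially the same route as the paper: both arguments invoke Lemma~\ref{join} to get a filtration with subquotients in $\mathcal{T}_1$ or $\mathcal{T}_2$, use quotient-closure to keep everything inside $\mathcal{T}_3$, and then push membership through the filtration by extension-closure together with a double use of the hypothesis $\mathcal{T}_1\wedge\mathcal{T}_3=\mathcal{T}_2\wedge\mathcal{T}_3$. The only difference is bookkeeping: you induct on the length of $X$ and peel off the bottom term $X_1$ (using that $\mathcal{T}_1\vee\mathcal{T}_2$ is itself quotient-closed), whereas the paper inducts along the filtration index, showing $X/X_{n-j}\in\mathcal{T}_1\wedge\mathcal{T}_2\wedge\mathcal{T}_3$ from the top down.
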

\begin{proof}
Let $\mathcal{T}_1, \mathcal{T}_2, \mathcal{T}_3 \in \text{tors}(\Lambda).$ We show that if $\mathcal{T}_1\wedge \mathcal{T}_3 = \mathcal{T}_2 \wedge \mathcal{T}_3$, then $(\mathcal{T}_1 \vee \mathcal{T}_2)\wedge \mathcal{T}_3 = \mathcal{T}_1 \wedge \mathcal{T}_3.$ It is clear that $(\mathcal{T}_1 \vee \mathcal{T}_2)\wedge \mathcal{T}_3 \supset \mathcal{T}_1 \wedge \mathcal{T}_3$ so it is enough to show $(\mathcal{T}_1 \vee \mathcal{T}_2)\wedge \mathcal{T}_3 \subset \mathcal{T}_1 \wedge \mathcal{T}_3.$

Let $X \in (\mathcal{T}_1 \vee \mathcal{T}_2)\wedge \mathcal{T}_3$. By Lemma~\ref{join}, we see that $X$ has a filtration $0 = X_0 \subset X_1 \subset \cdots \subset X_n = X$ where for each $i \in [n]$ the quotient $X_i/X_{i-1}$ belongs to $\mathcal{T}_1$ or $\mathcal{T}_2.$ 

To complete the proof, it is enough to show that the module $X/X_{n-j} \in \mathcal{T}_1 \wedge \mathcal{T}_2 \wedge \mathcal{T}_3$ for any $j \in [n]$. Since $\mathcal{T}_3$ is a torsion class and since there is a surjection $X \to X/X_{n-j}$ for any $j \in [n]$, it is clear that $X/X_{n-j} \in \mathcal{T}_3$ for any $j \in [n].$ Thus we need to show that $X/X_{n-j} \in \mathcal{T}_1 \wedge \mathcal{T}_2$ for any $j \in [n]$.

To show that $X/X_{n-j} \in \mathcal{T}_1\wedge\mathcal{T}_2$ for any $j \in \{0\}\cup[n]$, we proceed by induction on $j$. The statement is clear when $j=0$. Assume $j\geq 1$ and $X/X_{n-j+1} \in \mathcal{T}_1 \wedge \mathcal{T}_2$ holds. We have a short exact sequence $$0 \to X_{n-j+1}/X_{n-j} \to X/X_{n-j} \to X/X_{n-j+1} \to 0$$ where $X_{n-j+1}/X_{n-j}$ belongs to $\mathcal{T}_1$ or $\mathcal{T}_2$ by the properties of the filtration of $X$. Thus $X/X_{n-j} \in \mathcal{T}_1$ or $\mathcal{T}_2$ since torsion classes are extension closed. By assumption, $\mathcal{T}_1\wedge \mathcal{T}_3 = \mathcal{T}_2 \wedge \mathcal{T}_3$ so $X/X_{n-j} \in \mathcal{T}_1\wedge \mathcal{T}_2.$ We conclude that $X/X_{n-j}\in \mathcal{T}_1\wedge \mathcal{T}_2$ for any $j \in [n].$\end{proof}

\section{Biclosed sets}\label{biclosedsets}

A \textbf{closure operator} on a set $C$ is an operator $X\mapsto\ov{X}$ on subsets of $C$ such that for $X,Y\subseteq C$:
\begin{itemize}
\item $X\subseteq\ov{X}$,
\item $\ov{\ov{X}}=\ov{X}$, and
\item if $X\subseteq Y$, then $\ov{X}\subseteq\ov{Y}$.
\end{itemize}
In addition, we assume that $\ov{\emptyset}=\emptyset$.  A subset $X$ of $C$ is \textbf{closed} if $X=\ov{X}$.  It is \textbf{co-closed} (or \textbf{open}) if $C-X$ is closed.  We say $X$ is \textbf{biclosed} (or \textbf{clopen}) if it is both closed and co-closed.  We let $\Bic(C)$ denote the poset of biclosed subsets of $C$, ordered by inclusion. If $C^{\pr}\subseteq C$, then $C^{\pr}$ inherits a closure operator from $C$; namely, $X\subseteq C^{\pr}$ is \textbf{closed relative to} $C^{\pr}$ if $X=\ov{X}\cap C^{\pr}$. One may define being relatively co-closed in a similar manner. In this way, we may consider $\Bic(C^{\pr})$ as the collection of subsets of $C^{\pr}$ that are closed and co-closed relative to $C^{\pr}$. We remark that the map $\Bic(C)\ra\Bic(C^{\pr})$ where $X\mapsto X\cap C^{\pr}$ is well-defined, but it may not be surjective in general.

For many closure operators, the poset of biclosed sets is not a lattice.  However, in some special cases, $\Bic(C)$ is a lattice with a semidistributive or congruence-uniform structure.  For example, if $C$ is the set of positive roots of a finite root system endowed with the convex closure, then $\Bic(C)$ is a congruence-uniform lattice \cite{reading:lattice}.  In this setting, biclosed sets of positive roots are inversion sets of elements of the associated Coxeter group, so $\Bic(C)$ may be identified with the weak order.

Some sufficient (but not necessary) criteria for semidistributivity and congruence-uniformity were given in \cite{mcconville:grassmann}. The statement about polygonality is new, so we provide a proof. We say a collection $\Bcal$ of subsets of $C$ is \textbf{ordered by single-step inclusion} if whenever $X,Y\in\Bcal$ with $X\subsetneq Y$, there exists $c\in Y-X$ such that $X\cup\{c\}\in\Bcal$.

\begin{theorem}\cite[Theorem 5.2]{mcconville:grassmann}\label{thm_cu_criteria}
Let $C$ be a set with a closure operator.  Assume that
\begin{enumerate}
\item\label{thm_cu_criteria_1} $\Bic(C)$ is ordered by single-step inclusion, and
\item\label{thm_cu_criteria_2} $W\cup\ov{(X\cup Y)-W}$ is biclosed for $W,X,Y\in\Bic(C)$ with $W\subseteq X\cap Y$.

Then $\Bic(C)$ is a semidistributive lattice.

$\Bic(C)$ is congruence-uniform if it satisfies (1), (2), and there is a poset structure $(C,\prec)$ such that

\item\label{thm_cu_criteria_3} if $x,y,z\in C$ with $z\in\ov{\{x,y\}}-\{x,y\}$ then $x\prec z$ and $y\prec z$.

$\Bic(C)$ is polygonal if it satisfies (1), (2), and 

\item\label{thm_cu_criteria_4} for distinct $x,y\in C$, $\Bic(\ov{\{x,y\}})$ is a polygon.

\end{enumerate}

\end{theorem}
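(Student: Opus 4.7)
The plan is to establish the lattice structure of $\Bic(C)$ first, and then derive each of the three conclusions from the corresponding hypotheses. Applying condition (\ref{thm_cu_criteria_2}) with $W=\emptyset$ shows that $\overline{X\cup Y}$ is biclosed whenever $X,Y$ are, so the assignment $X\vee Y:=\overline{X\cup Y}$ is a well-defined join operation. Because $\Bic(C)$ is finite and has a maximum element $C$, this join makes $\Bic(C)$ into a lattice, with $X\wedge Y$ realized as the join of all biclosed subsets of $X\cap Y$.

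To prove meet-semidistributivity, suppose $X\wedge Y_1=X\wedge Y_2=:W$. The goal is $X\wedge(Y_1\vee Y_2)\subseteq W$. Using hypothesis (\ref{thm_cu_criteria_2}), write $Y_1\vee Y_2=W\cup\overline{(Y_1\cup Y_2)-W}$, which is biclosed. Supposing for contradiction that some $c\in\bigl(X\wedge(Y_1\vee Y_2)\bigr)-W$ exists, single-step inclusion (\ref{thm_cu_criteria_1}) lets one build a saturated chain of biclosed sets from $W$ to $X\wedge(Y_1\vee Y_2)$ by adjoining one element at a time; pick the first step at which an element outside $Y_1$ and outside $Y_2$ is adjoined. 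Using again that $(Y_1\vee Y_2)-W$ is generated under closure from elements of $Y_1\cup Y_2$, one traces the closure dependencies of this new element back to members of either $Y_1$ or $Y_2$, each of which must lie in $X\wedge Y_i=W$, yielding the required contradiction. Join-semidistributivity follows by the dual argument, replacing closed with co-closed and using that complements of biclosed sets are biclosed.

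For polygonality and congruence-uniformity, the local behavior at each element is controlled by the closure on two-element subsets. Given $X\in\Bic(C)$ with distinct covers $X\cup\{x\}$ and $X\cup\{y\}$ produced by (\ref{thm_cu_criteria_1}), the interval $[X,X\vee\overline{\{x,y\}}]$ should be identified with $\Bic\bigl(\overline{\{x,y\}}\bigr)$ via the map $Z\mapsto Z-X$, where the biclosedness of the image is guaranteed by (\ref{thm_cu_criteria_2}) applied with $W=X$. Hypothesis (\ref{thm_cu_criteria_4}) then immediately yields the polygonal condition; the dual description at covers below $X$ follows by the same technique. For congruence-uniformity, a linear extension of $\prec$ provides an ordering $c_1,c_2,\ldots,c_N$ of $C$, and one constructs an interval-doubling sequence in the sense of Day by inductively cutting $\Bic(C_{<i})$ along the biclosed sets whose closure is expanded by adjoining $c_i$. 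Condition (\ref{thm_cu_criteria_3}) ensures that whenever $c_i\in\overline{\{c_j,c_k\}}$ with $j,k<i$, the element $c_i$ is introduced after its generators, making each doubling step well defined.

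The main obstacle is the meet-semidistributivity argument, specifically the bookkeeping required to certify that the ``first'' adjoined element of a single-step chain from $W$ to $X\wedge(Y_1\vee Y_2)$ can actually be extracted from $Y_1\cup Y_2$ rather than from nontrivial closure dependencies. Controlling this requires a delicate interplay between the single-step property (\ref{thm_cu_criteria_1}) and the biclosedness guaranteed by (\ref{thm_cu_criteria_2}); a clean inductive setup on $|Y_1\vee Y_2|-|W|$, combined with the biclosedness of intermediate sets of the form $W\cup\overline{S}$ for $S\subseteq(Y_1\cup Y_2)-W$, should make the argument precise.
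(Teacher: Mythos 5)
This theorem is imported by the paper from \cite{mcconville:grassmann} without proof, so there is no in-paper argument to compare against; judging your proposal on its own merits, the setup is fine (the join $X\vee Y=\overline{X\cup Y}$ via hypothesis (2) with $W=\emptyset$, the meet as the largest biclosed subset of $X\cap Y$, and the reduction of join-semidistributivity to meet-semidistributivity via the complementation anti-automorphism), but the heart of the semidistributivity argument has a genuine gap. After producing, via single-step inclusion, an element $c$ with $W\cup\{c\}$ biclosed and $c\in\bigl(X\wedge(Y_1\vee Y_2)\bigr)-W$, you propose to ``trace the closure dependencies'' of $c$ back to elements of $Y_1\cup Y_2$ and conclude that those elements lie in $X\wedge Y_i=W$. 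This fails twice: for an abstract closure operator there is no stepwise generation to trace (the lemma about computing closures by successive concatenations is special to the acyclic-path closure later in the paper), and even granting a generating set $S\subseteq (Y_1\cup Y_2)-W$ with $c\in\overline{S}$, nothing forces its elements to lie in $X$, so they cannot be compared with $X\wedge Y_i$ at all; note moreover that $X\cap Y_i$ may strictly contain $W$ --- only its \emph{biclosed} subsets are bounded by $W$. So the contradiction is never reached, which is exactly the ``main obstacle'' you flag as unresolved.

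The missing idea is to use the co-closedness of the one-step extension rather than any notion of generation, and then no chain or induction is needed. First, $c\notin Y_1\cup Y_2$: otherwise $W\cup\{c\}$ would be a biclosed subset of $X\cap Y_i$ strictly containing $W=X\wedge Y_i$. Consequently $(Y_1\cup Y_2)-W\subseteq C-(W\cup\{c\})$, and the right-hand side is closed because $W\cup\{c\}$ is co-closed; hence $\overline{(Y_1\cup Y_2)-W}\subseteq C-(W\cup\{c\})$, so $c\notin W\cup\overline{(Y_1\cup Y_2)-W}$. But hypothesis (2), applied with $W\subseteq Y_1\cap Y_2$, shows this set is closed and contains $Y_1\cup Y_2$, hence equals $Y_1\vee Y_2$, contradicting $c\in Y_1\vee Y_2$. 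With this repair the meet-semidistributivity argument is complete, and join-semidistributivity follows by complementation as you say. Separately, your congruence-uniformity paragraph is only a statement of intent: the claim that adjoining $c_i$ realizes $\Bic$ of the enlarged initial segment as an interval doubling of $\Bic$ of the previous one is precisely the content to be proved, and nothing is offered for it (nor is it checked how initial segments of a linear extension of $\prec$ interact with closures of sets of size larger than two); likewise the asserted isomorphism $[X,\,X\cup\overline{\{x,y\}}]\cong\Bic(\overline{\{x,y\}})$ via $Z\mapsto Z-X$ needs an argument, since it is not immediate that $Z-X$ is biclosed inside $\overline{\{x,y\}}$. Those parts would need substantial elaboration before the congruence-uniformity and polygonality claims count as proved.
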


\begin{proof}
  Only the statement about polygonality remains to be proved. Let $W,X,Y\in\Bic(C)$ be distinct biclosed sets such that $W$ is covered by $X$ and $Y$. By (\ref{thm_cu_criteria_1}), this means there exists $x,y\in C$ such that $X=W\cup\{x\}$ and $Y=W\cup\{y\}$. By (\ref{thm_cu_criteria_2}), the set $W\cup\ov{\{x,y\}}$ is biclosed and is the smallest closed set containing both $X$ and $Y$. Hence, $X\vee Y=W\cup\ov{\{x,y\}}$.

  The map $[W,X\vee Y]\ra\Bic(\ov{\{x,y\}})$ where $Z\mapsto Z- W$ is injective. We prove that it is surjective as well. This statement combined with (\ref{thm_cu_criteria_4}) then implies that $[W,X\vee Y]$ is a polygon.

  There exist chains $X_0\lessdot X_1\lessdot\cdots\lessdot X_l$ and $Y_0\lessdot Y_1\lessdot\cdots\lessdot Y_l$ in $[W,X\vee Y]$ such that $X_1=X,\ Y_1=Y,\ X_l=Y_l=X\vee Y$, and $X_0=Y_0=W$. Applying the map $Z\mapsto Z- W$ to both chains gives two chains in $\Bic(\ov{\{x,y\}})$, one containing $\{x\}$ and the other containing $\{y\}$. By (\ref{thm_cu_criteria_1}), these chains are unrefinable. As $\Bic(\ov{\{x,y\}})$ is a polygon, every element of $\Bic(\ov{\{x,y\}})$ must be of the form $X_i- W$ or $Y_i- W$ for some $i$, as desired.

  The poset $\Bic(C)$ is self-dual in the sense that $X\leq Y$ if and only if $C- Y\leq C- X$. From this duality, it follows from the preceding argument that $[X\wedge Y,W]$ is a polygon whenever $X$ and $Y$ are distinct biclosed sets both covered by $W$. Hence, $\Bic(C)$ is a polygonal lattice.
\end{proof}

\begin{example}\label{ex_perm}
For $X\subseteq\binom{[n]}{2}$, say $X$ is closed if $\{i,k\}\in X$ holds whenever $\{i,j\}\in X$ and $\{j,k\}\in X$ for $1\leq i<j<k\leq n$.  It is easy to check that biclosed subsets of $\binom{[n]}{2}$ are inversion sets of permutations.  Moreover, ordering $\{j,k\}\preceq\{i,l\}$ if $i\leq j<k\leq l$, this closure space satisfies the hypotheses of Theorem \ref{thm_cu_criteria}.  Hence, one may deduce that the weak order is a congruence-uniform and polygonal lattice.  We refer to \cite{mcconville:grassmann} for more examples.
\end{example}

\subsection{Biclosed sets of paths}

Given an undirected graph $G=(V,E)$, we say a set $p\subseteq V$ is an \textbf{acyclic path} if the induced subgraph of $G$ on $p$ is a path graph; i.e. it is a full subgraph of $G$ of type $\mathbb{A}_l$ for some $l>0$. Let $\AP$ denote the set of all acyclic paths of $G$. Two acyclic paths $p,p^{\pr}$ are said to be \textbf{composable} if $p\cap p^{\pr}=\emptyset$ and $p\cup p^{\pr}$ is in $\AP$. If $p$ and $p^{\pr}$ are composable, we set $p\circ p^{\pr}=p\cup p^{\pr}$. It is often convenient to consider an acyclic path as a sequence of distinct vertices $(v_0,\ldots,v_l)$ where $v_i$ and $v_j$ are adjacent if and only if $|i-j|=1$. In this case, composition of two acyclic paths $(v_0,\ldots,v_t)$ and $(v_0^{\pr},\ldots,v_s^{\pr})$ may be expressed as a sequence $(v_0,\ldots,v_t,v_0^{\pr},\ldots,v_s^{\pr})$. This notation can be misleading, however, since we consider $(v_0,\ldots,v_t)$ and its reverse $(v_t,\ldots,v_0)$ as the same acyclic path.

For $X\subseteq\AP$, we say $X$ is \textbf{closed} if for $p,p^{\pr}\in X$, if $p\circ p^{\pr}\in\AP$ then $p\circ p^{\pr}\in X$.  As before, we say $X$ is biclosed if both $X$ and $\AP-X$ are closed.

The closure of any subset of acyclic paths may be computed by successively concatenating paths.  We record this useful fact in the following lemma.

\begin{lemma}
Let $X\subseteq\AP$.  If $p\in\ov{X}$, then there exist paths $q_1,\ldots,q_t\in X$ such that $p=q_1\circ\cdots\circ q_t$.
\end{lemma}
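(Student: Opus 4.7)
The plan is to define the set $Y$ of acyclic paths that are finite concatenations of paths from $X$, and show that $Y$ is itself closed under the concatenation operator. Since $Y$ contains $X$ and is closed, the minimality of $\ov{X}$ as a closed set containing $X$ will force $\ov{X} \subseteq Y$, which is the desired statement.

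In more detail, I would set
\[ Y := \{p \in \AP : p = q_1 \circ \cdots \circ q_t \text{ for some } t \geq 1 \text{ and } q_1, \ldots, q_t \in X\}. \]
Taking $t = 1$ shows $X \subseteq Y$. To prove $Y$ is closed, suppose $p, p^{\pr} \in Y$ are such that $p \circ p^{\pr} \in \AP$. Writing $p = q_1 \circ \cdots \circ q_s$ and $p^{\pr} = q_1^{\pr} \circ \cdots \circ q_t^{\pr}$ with each factor in $X$, the identification
\[ p \circ p^{\pr} = q_1 \circ \cdots \circ q_s \circ q_1^{\pr} \circ \cdots \circ q_t^{\pr} \]
is a well-defined concatenation because the endpoints already match inside $p$, inside $p^{\pr}$, and between $p$ and $p^{\pr}$. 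Hence $p \circ p^{\pr} \in Y$, proving $Y$ is closed.

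Since $\ov{X}$ is the smallest closed subset of $\AP$ containing $X$, we conclude $\ov{X} \subseteq Y$, which is exactly the assertion that every $p \in \ov{X}$ admits a decomposition of the required form. The main thing to be careful about is the convention that acyclic paths are identified with their reversals, so when we refer to ``the'' concatenation $p \circ p^{\pr}$ we must allow orienting each factor so that the endpoints agree; this is an unambiguous choice because each factor is determined by its vertex set. No genuine obstacle arises: the argument is the standard one showing that a closure generated by a binary partial operation is obtained by finite iteration of that operation.
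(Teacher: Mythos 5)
Your proof is correct, and since the paper states this lemma without proof (treating it as the routine observation that the closure is generated by iterated concatenation), your argument is exactly the standard one the authors implicitly have in mind: the set of finite concatenations of members of $X$ contains $X$ and is closed, so it contains $\ov{X}$. Your remark about the reversal convention is the only delicate point, and you handle it adequately.
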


\begin{theorem}\label{bicapprops}
$\Bic(\AP)$ is a semidistributive, congruence-uniform, and polygonal lattice.
\end{theorem}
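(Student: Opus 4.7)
The plan is to apply Theorem~\ref{thm_cu_criteria} by verifying its four hypotheses for the closure operator on $\AP$. For hypothesis (1), single-step inclusion: given biclosed $X \subsetneq Y$, we seek $c \in Y \setminus X$ such that $X \cup \{c\}$ is biclosed. A natural candidate is an element $c \in Y \setminus X$ of minimum length. Co-closure of $X \cup \{c\}$ follows quickly: for any decomposition $c = p \circ q$ with $p, q \in \AP \setminus (X \cup \{c\})$, both $p$ and $q$ are strictly shorter than $c$, so by length-minimality of $c$ in $Y \setminus X$ neither lies in $Y \setminus X$; since both lie in $\AP \setminus X$, both must lie in $\AP \setminus Y$, whence $c = p \circ q \in \AP \setminus Y$ by closure of $\AP \setminus Y$, contradicting $c \in Y$. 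The harder step is closure of $X \cup \{c\}$: one must argue that $c \circ q$ (and $q \circ c$) lies in $X$ rather than $Y \setminus X$ whenever $q \in X$ and the concatenation is acyclic. This is the main obstacle and likely requires refining the choice of $c$ beyond mere length-minimality by analyzing how longer acyclic paths in $Y \setminus X$ decompose.

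For hypothesis (2), given biclosed $W \subseteq X \cap Y$, one uses the preceding lemma describing $\overline{Z}$ as the set of iterated concatenations of elements of $Z$. Any acyclic concatenation among paths in $W \cup \overline{(X \cup Y) \setminus W}$ can be reduced to a concatenation involving only paths from $(X \cup Y) \setminus W$, possibly after absorbing adjacent paths from $W$, yielding closure; co-closure then follows from the biclosedness of $X$ and $Y$ individually, since a hypothetical concatenation in $\AP \setminus (W \cup \overline{(X \cup Y) \setminus W})$ producing an element of $W \cup \overline{(X \cup Y) \setminus W}$ can be traced back to a violation of biclosedness for one of $X$ or $Y$.

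For hypothesis (3), define $p \prec q$ to mean that $p$ is a proper subpath of $q$; this is a strict partial order on $\AP$. For $r \in \overline{\{p,q\}} \setminus \{p,q\}$, the iterated-concatenation lemma shows that $r$ is built from copies of $p$ and $q$, so both $p$ and $q$ appear as proper subpaths of $r$, giving $p \prec r$ and $q \prec r$. For hypothesis (4), the acyclicity (induced-path) condition forces at most one of $x \circ y, y \circ x$ to be a path in $\AP$ for distinct $x, y \in \AP$, since the existence of both compositions would place the endpoints of one composition as a chord inside the other, violating acyclicity. If neither composition is acyclic then $\overline{\{x,y\}} = \{x,y\}$ and $\Bic(\overline{\{x,y\}})$ is the four-element Boolean lattice, a polygon. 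Otherwise $\overline{\{x,y\}} = \{x, y, x \circ y\}$ and a direct enumeration identifies the six biclosed subsets $\emptyset$, $\{x\}$, $\{y\}$, $\{x, x \circ y\}$, $\{y, x \circ y\}$, $\{x, y, x \circ y\}$, which form a hexagon. With all four hypotheses verified, Theorem~\ref{thm_cu_criteria} yields the conclusion.
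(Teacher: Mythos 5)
Your skeleton is the paper's: verify hypotheses (1)--(4) of Theorem~\ref{thm_cu_criteria}, and your treatment of (3) and (4) (the subpath order, the description of $\ov{\{x,y\}}$, and the square/hexagon enumeration) is essentially what the paper does. The genuine gap is at hypothesis (1), and you flag it yourself without repairing it: length-minimality of $c\in Y\setminus X$ gives co-closedness of $X\cup\{c\}$ but provably does not give closedness. Concretely, take the graph with vertices $1,2,3$ and edges $\{1,2\},\{2,3\}$, and let $X=\{(2)\}$, $Y=\{(2),(3),(2,3)\}$; both are biclosed, the unique minimum-length element of $Y\setminus X$ is $(3)$, and $X\cup\{(3)\}$ is not closed since $(2)\circ(3)=(2,3)\notin X\cup\{(3)\}$, whereas $X\cup\{(2,3)\}$ is biclosed. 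The missing idea is the paper's two-stage choice: one first observes (using co-closedness of $Y$ and minimality) that minimum-length elements of $Y\setminus X$ have the property that every factorization has a factor in $X$, and then chooses $p_0$ of \emph{maximum} length among elements of $Y\setminus X$ with that property. Co-closedness of $X\cup\{p_0\}$ is then immediate, and closedness is proved by taking $p\in X$ of minimum length with $p\circ p_0$ acyclic and outside $X$, using maximality of $p_0$ to write $p\circ p_0=q\circ q'$ with $q,q'\notin X$, and deriving a contradiction in each of the two ways the splittings can interleave (one case contradicts the defining property of $p_0$, the other the minimality of $p$). Without some argument of this kind, single-step inclusion --- the engine behind everything else, including the polygon description --- is unproven.

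Hypothesis (2) is also only sketched, and the co-closure half as stated is not an argument. The closure half can be organized as you suggest (induct on length, absorbing a $W$-factor using $W\subseteq X\cap Y$ and closedness of $X$ and $Y$), and the paper then identifies $W\cup\ov{(X\cup Y)-W}$ with $\ov{X\cup Y}$. But co-closedness of $\ov{X\cup Y}$ does not ``trace back'' directly to co-closedness of $X$ or $Y$: the closure contains concatenations lying in neither $X$ nor $Y$, and the paper needs a separate minimal-counterexample argument, writing a minimal bad $p=q\circ q'$ as $p_1\circ p'$ with $p_1\in X\cup Y$ and $p'\in\ov{X\cup Y}$ and analyzing whether $p_1$ is a subpath of $q$ or $q$ a subpath of $p_1$. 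So the route you propose is the right one, but as written the two substantive steps, (1) and the co-closure part of (2), are gaps rather than proofs.
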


\begin{proof}
To prove this result, we verify properties (1)-(4) of Theorem \ref{thm_cu_criteria}.

Let $X,Y\in\Bic(\AP)$ such that $X\subsetneq Y$.  If $p,q,q^{\pr}$ are paths such that $p\in Y$ and $q\circ q^{\pr}=p$, then either $q$ or $q^{\pr}$ is in $Y$.  If $p\in Y-X$ is chosen of minimum length, then if $p = q \circ q^{\pr}$, either $q$ or $q^{\pr}$ must be in $X$.

Among the elements $p$ of $Y-X$ such that if $p=q\circ q^{\pr}$ then either $q\in X$ or $q^{\pr}\in X$, choose $p_0$ to be of maximum length.  We prove that $X\cup\{p_0\}$ is biclosed.  By the choice of $p_0$, it is immediate that $X\cup\{p_0\}$ is co-closed.

Assume that $X\cup\{p_0\}$ is not closed.  Then there exists $p\in X$ such that $p\circ p_0$ is an acyclic path but is not in $X$.  Among such paths, we assume $p$ is of minimum length.  Since $Y$ is closed, $p\circ p_0$ is in $Y$.  By the maximality of $p_0$, there exist acyclic paths $q,q^{\pr}$ both not in $X$ such that $p\circ p_0=q\circ q^{\pr}$.  Let $p\circ p_0=(v_0,\ldots,v_t)$.  Up to path reversal, we may assume $p=(v_0,\ldots,v_{i-1}),\ p_0=(v_i,\ldots,v_t),\ q=(v_0,\ldots,v_{j-1}),\ q^{\pr}=(v_j,\ldots,v_t)$ for some distinct indices $i,j$.

If $i<j$, then $p\in X,\ q\notin X$ implies $(v_i,\ldots,v_{j-1})\notin X$ since $X$ is closed.  But, $(v_i,\ldots,v_{j-1})\circ q^{\pr}=p_0$, contradicting the choice of $p_0$.

If $j<i$, then $p\in X,\ q\notin X$ implies $(v_j,\ldots,v_{i-1})\in X$ since $X$ is co-closed.  But $(v_j,\ldots,v_{i-1})\circ p_0=q^{\pr}$, contradicting the minimality of $p$.

We conclude that $X\cup\{p_0\}$ is biclosed.  Hence, $\Bic(\AP)$ is ordered by single-step inclusion.  This completes the proof of (\ref{thm_cu_criteria_1}).

Now let $W,X,Y\in\Bic(\AP)$ such that $W\subseteq X\cap Y$.

Assume $W\cup\ov{(X\cup Y)-W}$ is not closed.  Choose $p,q\in W\cup\ov{(X\cup Y)-W}$ such that $p\circ q$ is of minimum length with $p\circ q\notin W\cup\ov{(X\cup Y)-W}$.  As $W$ and $\ov{(X\cup Y)-W}$ are both closed, we may assume $p\in W$ and $q\in\ov{(X\cup Y)-W}$.  If $q\in (X\cup Y)-W$, then $p\circ q\in X\cup Y$ as $X$ and $Y$ are both closed.  Otherwise, $q=q^{\pr}\circ q_1$ where $q^{\pr}\in\ov{(X\cup Y)-W},\ q_1\in(X\cup Y)-W$.  By the minimality hypothesis, $p\circ q^{\pr}\in W\cup\ov{(X\cup Y)-W}$.  If $p\circ q^{\pr}\in W$, then $p\circ q=p\circ q^{\pr}\circ q_1\in X\cup Y$ as $X$ and $Y$ are both closed.  If $p\circ q^{\pr}\in\ov{(X\cup Y)-W}$, then so is $p\circ q^{\pr}\circ q_1$.  Either case contradicts the assumption that $p\circ q\notin W\cup\ov{(X\cup Y)-W}$.  Hence, this set is closed.  Using properties of closure operators, we deduce

$$W\cup\ov{(X\cup Y)-W}=\ov{W\cup\ov{(X\cup Y)-W}}=\ov{W\cup((X\cup Y)-W)}=\ov{X\cup Y}.$$

Now assume $\ov{X\cup Y}$ is not co-closed.  Choose $p\in\ov{X\cup Y}$ of minimum length such that $p=q\circ q^{\pr}$ for some paths $q,q^{\pr}$ not in $\ov{X\cup Y}$.  If $p\in X\cup Y$, then either $q\in X\cup Y$ or $q^{\pr}\in X\cup Y$ since both $X$ and $Y$ are co-closed.  Otherwise, there exist $p_1\in X\cup Y,\ p^{\pr}\in\ov{X\cup Y}$ such that $p=p_1\circ p^{\pr}$.

Suppose $p_1$ is a subpath of $q$ and let $r\in\AP$ such that $p_1\circ r=q$.  Then $r\circ q^{\pr}=p^{\pr}$, so either $r\in\ov{X\cup Y}$ or $q^{\pr}\in\ov{X\cup Y}$ by the minimality of $p$.  But if $r$ is in $\ov{X\cup Y}$ then so is $q=p_1\circ r$.  This contradicts the hypothesis on $q$.

Suppose $q$ is a subpath of $p_1$ and let $r\in\AP$ such that $q\circ r=p_1$.  This implies $r\circ p^{\pr}=q^{\pr}$.  Since $X$ and $Y$ are co-closed, either $r\in X\cup Y$ or $q\in X\cup Y$.  But if $r\in X\cup Y$, then $q^{\pr}=r\circ p^{\pr}\in \overline{X\cup Y}$ holds.  This contradicts the hypothesis on $q^{\pr}$.

Hence, $\ov{X\cup Y}$ is co-closed.  Putting this together, we deduce that $W\cup\ov{(X\cup Y)-W}$ is biclosed, establishing (\ref{thm_cu_criteria_2}).

Partially order the set of acyclic paths by inclusion; that is, for $p,q\in\AP$ set $p\preceq q$ if $p$ is a subpath of $q$.

For $p,q\in\AP$, the set $\ov{\{p,q\}}$ is $\{p,q\}$ if they are not composable and is $\{p,q,p\circ q\}$ otherwise.  In either case, it is easy to verify both (\ref{thm_cu_criteria_3}) and (\ref{thm_cu_criteria_4}).
\end{proof}

Using the properties of Theorem \ref{thm_cu_criteria}, we may determine the structure of all polygons in $\Bic(\AP)$.

\begin{corollary}\label{cor_polygon_ap}
Every polygon of $\Bic(\AP)$ is either a square or hexagon as in Figure~\ref{sqpent}.
\end{corollary}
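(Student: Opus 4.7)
The plan is to reduce the classification of polygons in $\Bic(\AP)$ to the computation of $\overline{\{p,q\}}$ for two acyclic paths $p,q$, which was already carried out inside the proof of Theorem~\ref{bicapprops}. The key point is that every polygon $[X,Y]$ of $\Bic(\AP)$ is controlled by the closure of a two-element subset of $\AP$.

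First I would use that $\Bic(\AP)$ is ordered by single-step inclusion (property~(\ref{thm_cu_criteria_1}) of Theorem~\ref{thm_cu_criteria}, verified in Theorem~\ref{bicapprops}) to write the two atoms of $[X,Y]$ above $X$ as $X\cup\{p\}$ and $X\cup\{q\}$ for distinct $p,q\in\AP$. Since the polygon has exactly two atoms and $Y$ is their join, applying property~(\ref{thm_cu_criteria_2}) of Theorem~\ref{thm_cu_criteria} with $W=X$ gives
$$Y=(X\cup\{p\})\vee(X\cup\{q\})=X\cup\overline{\{p,q\}}.$$
From the proof of Theorem~\ref{bicapprops}, $\overline{\{p,q\}}$ is either $\{p,q\}$ (when $p$ and $q$ are not composable as acyclic paths) or $\{p,q,p\circ q\}$ (when they are composable).

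In the first case $Y\setminus X\subseteq\{p,q\}$, and the interval $[X,Y]$ is a rank-two Boolean lattice, i.e., a square. In the second case I would enumerate the biclosed sets of $\AP$ lying in $[X,Y]$: the six candidates
$$X,\ X\cup\{p\},\ X\cup\{q\},\ X\cup\{p,p\circ q\},\ X\cup\{q,p\circ q\},\ X\cup\{p,q,p\circ q\}=Y$$
should all be biclosed, while $X\cup\{p\circ q\}$ and $X\cup\{p,q\}$ should fail to be biclosed, forcing $[X,Y]$ to be a hexagon; if instead $p\circ q\in X$, the same analysis degenerates to a square.

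The main obstacle will be confirming biclosedness of the intermediate sets in the ambient lattice $\Bic(\AP)$, since the closure operator on $\AP$ can in principle mix the paths $p,q,p\circ q$ with paths already in $X$. A clean way to bypass this is to establish that the map $Z\mapsto Z\setminus X$ is a lattice isomorphism from the interval $[X,Y]$ in $\Bic(\AP)$ onto $\Bic(\overline{\{p,q\}})$ with respect to the restricted closure operator, which reduces the corollary directly to the explicit description of $\Bic(\{p,q\})$ (a square) and $\Bic(\{p,q,p\circ q\})$ (a hexagon) already implicit in the proof of Theorem~\ref{bicapprops}.
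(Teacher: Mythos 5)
Your proposal is correct and follows essentially the same route as the paper's proof: single-step inclusion (property (1) of Theorem~\ref{thm_cu_criteria}) identifies the unique paths $p,q$ added by the two atoms, property (2) with $W$ equal to the bottom element gives the top of the polygon as $W\cup\ov{\{p,q\}}$, and the composable/non-composable dichotomy for $p,q$ yields the hexagon/square cases. The only difference is cosmetic: you enumerate the intermediate biclosed sets explicitly and flag the degenerate possibility $p\circ q\in X$ (which in fact cannot occur, since the bottom element is co-closed), whereas the paper leaves these verifications to the already-established polygonality and single-step inclusion of $\Bic(\AP)$.
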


\begin{figure}[h]
\includegraphics[scale=1]{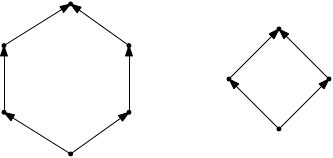}
\caption{The polygons of $\text{Bic}(\text{AP}).$}
\label{sqpent}
\end{figure}

\begin{proof}
A polygon of $\Bic(\AP)$ is an interval of the form $[W,X\vee Y]$ where $W,X,Y\in\Bic(\AP)$ such that $W\lessdot X$ and $W\lessdot Y$. By (1), there exist unique paths $p\in X-W$ and $q\in Y-W$. By (2), $X\vee Y=W\cup\ov{\{p,q\}}$.  If $p$ and $q$ are not composable, then $\ov{\{p,q\}}=\{p,q\}$, which implies that the interval $[W,X\vee Y]$ is a square. Otherwise, $\ov{\{p,q\}}=\{p,q,p\circ q\}$, and the interval $[W,X\vee Y]$ is a hexagon as in Figure~\ref{sqpent}.
\end{proof}

Let $\widehat{Q}$ be the framed quiver of $Q$ with positive \textbf{c}-vectors \textbf{c}-vec$^+(Q)$.  We say that a subset $X$ of \textbf{c}-vec$^+(Q)$ is \textbf{closed} if $x+y\in X$ whenever $x,y\in X$ and $x+y\in$ \textbf{c}-vec$^+(Q)$.

The relation to the previous closure operator is that if $Q$ is of type $\mathbb{A}$ or is an oriented cycle, then the positive \textbf{c}-vectors of $Q$ are in natural bijection with acyclic paths in the underlying graph of $Q$.  Moreover, the closure operators are identified via this bijection. Thus we define $\text{Bic}(Q)$ to be the lattice of \textbf{biclosed sets of \textbf{c}-vectors} of $Q$.

\section{Biclosed subcategories}\label{biclosedsubcat}

Throughout this section, we assume that $\Lambda = \Bbbk Q/I$ is the cluster-tilted algebra defined by a quiver $Q$, which is either a cyclic quiver or of type $\mathbb{A}$. In this section, we show how to translate the information of $\text{Bic}(Q)$ into a lattice of \textbf{biclosed} subcategories of $\Lambda$-$\text{mod}$ that we will denote by $\mathcal{BIC}(Q).$ More specifically, each biclosed set $B \in \text{Bic}(Q)$ will determine a unique subcategory $\mathcal{B}$ of $\Lambda$-mod and an inclusion of biclosed sets $B_1 \subset B_2$ will translate into an inclusion of biclosed subcategories $\mathcal{B}_1 \subset \mathcal{B}_2.$ Using the additional algebraic data that accompanies these subcategories, we prove that the oriented exchange graph defined by $Q$ is a lattice quotient of $\mathcal{BIC}(Q).$

\begin{definition}\label{bicsub}
Let $\mathcal{C}$ be a subcategory of $\Lambda\text{-mod}$. We say that $\mathcal{C}$ is \textbf{biclosed} if 

\begin{itemize}
\item[$i)$] $\mathcal{C} = \text{add}\left(\oplus_{i \in [k]} M(w_i) \right)$ for some set of $\Lambda$-modules $\{M(w_i)\}_{i \in [k]}$; 
\item[$ii)$] $\mathcal{C}$ is \textbf{weakly extension closed} (i.e., if $0 \to M(w_1) \to M(w_3) \to M(w_2) \to 0$ is an exact sequence with $M(w_1), M(w_2) \in \mathcal{C}$, then $M(w_3) \in \mathcal{C}$);
\item[iii)] $\mathcal{C}$ is \textbf{weakly extension co-closed} (i.e., if $0 \to M(w_1) \to M(w_3) \to M(w_2) \to 0$ is an exact sequence with $M(w_1), M(w_2) \not \in \mathcal{C}$, then $M(w_3) \not \in \mathcal{C}$).
\end{itemize}

\noindent Let $\mathcal{BIC}(Q)$ denote the collection of biclosed subcategories of $\Lambda$-mod ordered by inclusion.
\end{definition}

Part $i)$ in Definition~\ref{bicsub} says that the elements of $\mathcal{BIC}(Q)$ are additively generated subcategories $\mathcal{C}$ of $\Lambda\text{-mod}$. Since we are restricting our attention to representation-finite algebras and thus to module categories with finitely many indecomposable objects, we can define \textbf{complementation} on the collection of additively generated subcategories of $\Lambda\text{-mod}$. Let $\mathcal{ADD}(Q)$ denote the collection of additively generated subcategories of $\Lambda\text{-mod}$ and let $A := \{M(w_i)\}_{i \in [k]}$ be any set of indecomposable $\Lambda$-modules. We define the \textbf{complementation} of an additively generated subcategory by
$$\begin{array}{rcl}
\mathcal{ADD}(Q) & \stackrel{(-)^c}{\longrightarrow} & \mathcal{ADD}(Q)\\
\mathcal{A} := \text{add}(\oplus M(w_i): M(w_i) \in A) & \longmapsto & \mathcal{A}^c := \text{add}(\oplus M(w_i): M(w_i) \not \in A).
\end{array}$$
Clearly, $(\mathcal{A}^c)^c = \mathcal{A}.$ It is also clear from the definition of biclosed subcategories of $\Lambda$-mod that complementation restricts to a duality $(-)^c: \mathcal{BIC}(Q) \to \mathcal{BIC}(Q).$ 

Additionally, we remark that the standard duality (i.e., $D(-):=\Hom_\Bbbk(-,\Bbbk)$) gives us the following bijection
$$\begin{array}{rcl}
\mathcal{ADD}(Q) & \stackrel{D(-)}{\longrightarrow} & \mathcal{ADD}(Q^{\text{op}})\\
\mathcal{A} := \text{add}(\oplus M(w_i): M(w_i) \in A) & \longmapsto & D\mathcal{A} := \text{add}(\oplus DM(w_i): M(w_i) \in A).
\end{array}$$
As with complementation, one has $D(D\mathcal{A}) = \mathcal{A}.$ The following obvious lemma shows that the standard duality and complementation interact nicely.

\begin{lemma}
For any $\mathcal{A} \in \mathcal{ADD}(Q)$, we have that $(D\mathcal{A})^c = D(\mathcal{A}^c).$
\end{lemma}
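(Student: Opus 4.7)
The plan is to unfold both sides of the equality in terms of indecomposable summands and then appeal to the fact that the standard duality $D=\Hom_{\Lambda}(-,\Bbbk)$ is an equivalence $\Lambda\text{-mod}\to \Lambda^{\op}\text{-mod}$. Since $\Lambda$ is representation-finite, I would fix a finite set $\{M(w_j)\}_{j\in J}$ of representatives of the iso classes of indecomposable $\Lambda$-modules, and write $\mathcal{A}=\text{add}\bigl(\bigoplus_{j\in A}M(w_j)\bigr)$ for some subset $A\subseteq J$.

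With this notation, the definition of the complementation functor gives $\mathcal{A}^c=\text{add}\bigl(\bigoplus_{j\in J\setminus A}M(w_j)\bigr)$, so that $D(\mathcal{A}^c)=\text{add}\bigl(\bigoplus_{j\in J\setminus A}DM(w_j)\bigr)$ by the definition of $D$ on $\mathcal{ADD}(Q)$. On the other side, $D\mathcal{A}=\text{add}\bigl(\bigoplus_{j\in A}DM(w_j)\bigr)$, and to form $(D\mathcal{A})^c$ I need to identify the full set of iso classes of indecomposable $\Lambda^{\op}$-modules in which I am taking the complement.

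This is the only step that requires any content: the standard duality $D$ is an exact $\Bbbk$-linear anti-equivalence, hence sends non-isomorphic indecomposables to non-isomorphic indecomposables, and every indecomposable $\Lambda^{\op}$-module is of the form $DN$ for a unique (up to isomorphism) indecomposable $\Lambda$-module $N$ (apply $D$ again and use $DD\cong \mathrm{id}$). Thus $\{DM(w_j)\}_{j\in J}$ is a complete and irredundant list of representatives of the iso classes of indecomposable $\Lambda^{\op}$-modules, indexed by the same set $J$.

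Given this bijection, the complement of the image equals the image of the complement, so
\[
(D\mathcal{A})^c=\text{add}\Bigl(\bigoplus_{j\in J\setminus A}DM(w_j)\Bigr)=D(\mathcal{A}^c),
\]
finishing the proof. There is no genuine obstacle here; the argument is purely bookkeeping once one records that $D$ induces a bijection on iso classes of indecomposables, so I would keep the write-up to a short paragraph of the form described above.
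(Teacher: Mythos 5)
Your argument is correct. The paper in fact gives no proof of this lemma at all -- it is stated as an ``obvious lemma'' -- and the justification it implicitly relies on is exactly the one you spell out: $D=\Hom_{\Bbbk}(-,\Bbbk)$ is a duality, so it induces a bijection on isomorphism classes of indecomposables between $\Lambda$-mod and $\Lambda^{\op}$-mod (using $DD\cong\mathrm{id}$), and therefore complementation of the indexing set of indecomposable summands commutes with applying $D$. Your only substantive step, identifying $\{DM(w_j)\}_{j\in J}$ as a complete irredundant list of indecomposable $\Lambda^{\op}$-modules so that the complement in $(D\mathcal{A})^c$ is taken against the right ambient set, is precisely the point that needs recording, and your write-up handles it correctly.
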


To prove that $\mathcal{BIC}(Q)$ is a lattice, we show that it is isomorphic to $\Bic(Q)$. The key insight is the following lemma, which follows easily from the description of extensions of strings modules of $Q$ in Lemma~\ref{extq_n} and \cite[Theorem 3.7, Corollary 4.4]{cs14}.

\begin{lemma}
  For $i=1,2,3$, let $X_i\in \text{ind}(\Lambda\text{-mod})$ and $\textbf{c}_i=\un{\text{dim}}(X_i)$. Then $\textbf{c}_3=\textbf{c}_1+\textbf{c}_2$ if and only if there exists a short exact sequence $0\ra X_j\ra X_3\ra X_k\ra 0$ with $\{j,k\}=\{1,2\}$.
\end{lemma}

From this lemma, the closure space on $\text{ind}(\Lambda\text{-mod})$ induced by the weak extension closure is isomorphic to the closure space on \textbf{c}-vectors of $Q$. There is a canonical bijection between $\mathcal{ADD}(Q)$ and subsets of $\text{ind}(\Lambda\text{-mod})$, from which we may deduce the following isomorphism.

\begin{proposition}
We have the following isomorphism of posets

$$\begin{array}{rcl}
\text{Bic}(Q) & \stackrel{\sim}{\longrightarrow} & \mathcal{BIC}(Q)\\
B & \longmapsto & \mathcal{B}:= \text{add}(\oplus_{\textbf{c} \in B} M(w({\textbf{c}})))\\
B := \{\underline{\dim} (M(w)) \in \mathbb{Z}^n: \ M(w) \in {\text{ind}(\mathcal{B})}\} & \testleftlong & \mathcal{B}.
\end{array}$$

\noindent In particular, $\mathcal{BIC}(Q)$ is a lattice.
\end{proposition}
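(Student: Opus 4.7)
The plan is that most of the work has already been done by Lemmas~\ref{BictoBIC} and~\ref{BICtoBic}, which establish that the two displayed maps are well-defined; what remains is to check that they are mutually inverse, that both directions are order-preserving, and then to transport the lattice structure from $\text{Bic}(Q)$ to $\mathcal{BIC}(Q)$ via the resulting poset isomorphism. The key tool throughout is Proposition~\ref{c-vecbij} (which also applies when $Q$ is a cyclic quiver $Q(n)$, since $\Lambda$ is then cluster-tilted of type $\mathbb{D}_n$ and hence representation-finite): the map $M(w)\mapsto\underline{\dim}(M(w))$ is a bijection between $\text{ind}(\Lambda\text{-mod})$ and the set of positive \textbf{c}-vectors of $Q$. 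In particular, the string module $M(w(\mathbf{c}))$ associated to a positive \textbf{c}-vector $\mathbf{c}$ satisfies $\underline{\dim}(M(w(\mathbf{c}))) = \mathbf{c}$, and conversely every indecomposable arises uniquely this way.

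With this bijection in hand, the mutual inverse property is immediate. Starting with $B\in\text{Bic}(Q)$, forming $\mathcal{B}:=\text{add}\bigl(\bigoplus_{\mathbf{c}\in B} M(w(\mathbf{c}))\bigr)$ and then reading off $\{\underline{\dim}(M(w)) : M(w)\in\text{ind}(\mathcal{B})\}$ returns exactly $\{\underline{\dim}(M(w(\mathbf{c}))): \mathbf{c}\in B\}=B$. Conversely, starting from $\mathcal{B}\in\mathcal{BIC}(Q)$, which by Definition~\ref{bicsub} part~$i)$ is additively generated by its indecomposable objects, the associated biclosed set $B$ has $M(w(\mathbf{c}))\in\mathcal{B}$ for each $\mathbf{c}\in B$, so the recovered additively generated subcategory coincides with $\mathcal{B}$.

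Order preservation in both directions reduces to the same observation. If $B_1\subseteq B_2$ in $\text{Bic}(Q)$, then every generating indecomposable $M(w(\mathbf{c}))$ of $\mathcal{B}_1$ lies among those of $\mathcal{B}_2$, so $\mathcal{B}_1\subseteq\mathcal{B}_2$. Conversely, if $\mathcal{B}_1\subseteq\mathcal{B}_2$ as subcategories of $\Lambda$-mod, then $\text{ind}(\mathcal{B}_1)\subseteq\text{ind}(\mathcal{B}_2)$, and passing to dimension vectors yields $B_1\subseteq B_2$. Hence the two maps are inverse isomorphisms of posets.

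Finally, Theorem~\ref{bicapprops} applied to the identification of positive \textbf{c}-vectors with acyclic paths (valid since $Q$ is of type $\mathbb{A}$ or is an oriented cycle) shows that $\text{Bic}(Q)$ is a lattice, so $\mathcal{BIC}(Q)$ is a lattice by transport of structure, with meets and joins computed by transferring across the isomorphism. There is no real obstacle here; the only subtlety worth flagging is to make sure one is invoking the dimension-vector bijection in the correct setting (both type $\mathbb{A}$ and cyclic cases are covered by Proposition~\ref{c-vecbij}) and that ``additively generated'' in Definition~\ref{bicsub}~$i)$ is what allows one to recover $\mathcal{B}$ from its set of indecomposables.
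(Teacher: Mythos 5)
Your proposal is correct and follows essentially the same route as the paper: well-definedness via Lemmas~\ref{BictoBIC} and~\ref{BICtoBic}, the observation that the maps are mutually inverse order-preserving bijections (which the paper states as "clearly"), and transport of the lattice structure from $\text{Bic}(Q)$. Your extra care in citing Proposition~\ref{c-vecbij} for the dimension-vector bijection, including the cyclic case, simply spells out what the paper leaves implicit.
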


Let $\mathcal{A}_1 := \text{add}(\oplus_{i\in[n]} M(w_i)), \mathcal{A}_2 :=\text{add}(\oplus_{j \in [m]} M(v_j)) \in \mathcal{ADD}(Q).$ We define $$\mathcal{A}_1 \cup \mathcal{A}_2 := \text{add}\left(\bigoplus_{i \in [n]}M(w_i) \oplus \bigoplus_{j \in [m]}M(v_j)\right) \in \mathcal{ADD}(Q)$$ and $$\mathcal{A}_1\setminus \mathcal{A}_2 := \text{add}\left(\bigoplus_{i \in [n]}M(w_i) : \ M(w_i) \not \in \{M(v_j)\}_{j \in  [m]}\right) \in \mathcal{ADD}(Q).$$
If $M$ is a module in $\mathcal{A} = \text{add}(\oplus_{i \in [n]} M(w_i)) \in \mathcal{ADD}(Q),$ we define $\mathcal{A}\setminus M$ to be the largest additively generated subcategory of $\mathcal{A}$ not containing any modules that have common nonzero summands with $M$.  Additionally, we define $\overline{\mathcal{A}} \in \mathcal{ADD}(Q)$ to be the smallest additively generated subcategory of $\Lambda$-mod containing $\mathcal{A}$ that is weakly extension closed.

We can now translate the formula for the join of two biclosed sets of \textbf{c}-vectors into a formula for the join of two biclosed subcategories.

\begin{corollary}\label{joininBIC}
If $\mathcal{B}_1, \mathcal{B}_2 \in \mathcal{BIC}(Q),$ then $\mathcal{B}_1 \vee \mathcal{B}_2 = \overline{\mathcal{B}_1 \cup \mathcal{B}_2}.$ 
\end{corollary}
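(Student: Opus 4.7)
The plan is to push the join formula from $\text{Bic}(Q)$ through the lattice isomorphism $\text{Bic}(Q)\xrightarrow{\sim}\mathcal{BIC}(Q)$ established in the preceding proposition. From the verification of condition (\ref{thm_cu_criteria_2}) of Theorem~\ref{thm_cu_criteria} in the proof of Theorem~\ref{bicapprops}, taking $W=\emptyset$ shows that $\overline{B_1\cup B_2}$ is biclosed whenever $B_1,B_2\in\text{Bic}(Q)$; since any biclosed set containing $B_1\cup B_2$ must contain its closure, this yields $B_1\vee B_2=\overline{B_1\cup B_2}$ in $\text{Bic}(Q)$. To transport this across the isomorphism, I would check that the categorical closure $\overline{(-)}$ (the smallest weakly extension closed additive subcategory) is the image of the c-vector closure under the bijection.

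The key translation is a dictionary between c-vector sums and short exact sequences of indecomposables. If $0\to M(w_1)\to M(w_3)\to M(w_2)\to 0$ is exact with all three modules indecomposable, additivity of dimension vectors gives $\underline{\dim}\,M(w_3)=\underline{\dim}\,M(w_1)+\underline{\dim}\,M(w_2)$, so on positive c-vectors this reads $c_3=c_1+c_2$. Conversely, if $c_1,c_2$ are positive c-vectors and $c_1+c_2\in\textbf{c}\text{-vec}(Q)$, then by Proposition~\ref{c-vecbij} the sum is the dimension vector of an indecomposable $M(w_3)$; for cyclic quivers Proposition~\ref{extclassif}(i) produces a nonsplit extension with indecomposable middle term $M(w_3)$ between $M(w(c_1))$ and $M(w(c_2))$ (in one of the two possible orientations), and an analogous string-module computation handles the type $\mathbb{A}$ case using Lemma~\ref{typeA}. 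Under the bijection $B\leftrightarrow\mathcal{B}$, one step of c-vector closure (adjoining a sum that is itself a c-vector) therefore corresponds to exactly one step of weak extension closure (adjoining an indecomposable middle term of an allowed short exact sequence). Iterating, the biclosed-fixed points match, and hence so do the closures: $\overline{B}\leftrightarrow\overline{\mathcal{B}}$.

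Combining the two ingredients, the join in $\mathcal{BIC}(Q)$ is the image under the isomorphism of $B_1\vee B_2=\overline{B_1\cup B_2}$, which equals $\overline{\mathcal{B}_1\cup\mathcal{B}_2}$, as desired. The main obstacle I expect is the converse direction of the closure dictionary, namely verifying that whenever $c_1+c_2\in\textbf{c}\text{-vec}(Q)$ the relevant nonsplit extension with indecomposable middle term actually exists; this is where Proposition~\ref{extclassif}(i) is essential, since case (ii) of that proposition produces a decomposable middle term and correspondingly $c_1+c_2$ fails to be a c-vector, so the two sides of the dictionary stay in step.
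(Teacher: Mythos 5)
Your proposal is correct and takes essentially the same route as the paper: the published proof likewise shows that $\Bic(Q)$ and $\mathcal{BIC}(Q)$ are isomorphic as closure spaces (a short exact sequence of indecomposable string modules with indecomposable outer terms exists exactly when the dimension vectors add, i.e.\ when the corresponding positive \textbf{c}-vectors sum to a \textbf{c}-vector) and then transfers the join formula $B_1\vee B_2=\overline{B_1\cup B_2}$, which comes from condition (2) of Theorem~\ref{thm_cu_criteria} with $W=\emptyset$, across this identification. One cosmetic quibble: Proposition~\ref{extclassif}(i) assumes $\Ext^1\neq 0$ rather than producing the extension, but the required short exact sequence is immediate from concatenating the two strings (as in Lemma~\ref{BICtoBic}), so this does not affect the argument.
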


The fact that $\overline{\mathcal{B}_1 \cup \mathcal{B}_2}$ in Corollary~\ref{joininBIC} is co-closed follows from Theorem~\ref{thm_cu_criteria}(\ref{thm_cu_criteria_2}).

\begin{lemma}\label{torsinclude}
There is an inclusion of posets $\text{tors}(\Lambda) \hookrightarrow \mathcal{BIC}(Q).$
\end{lemma}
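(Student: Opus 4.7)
The plan is to send each torsion class $\mathcal{T} \in \text{tors}(\Lambda)$ to itself, viewed as an additively generated subcategory of $\Lambda$-mod. Since $\Lambda$ is cluster-tilted of type $\mathbb{A}$ or of the form $\Bbbk Q(n)/\langle \alpha_1\cdots\alpha_{n-1}\rangle$, it is representation-finite, so any full additive subcategory closed under direct summands is automatically additively generated by its finitely many indecomposable objects. In particular, condition (i) of Definition~\ref{bicsub} is satisfied for every $\mathcal{T} \in \text{tors}(\Lambda)$. The map is clearly order-preserving and injective (both posets sit inside the collection of additive subcategories of $\Lambda$-mod ordered by inclusion, and the map is the identity on underlying subcategories), so the only content is to verify that a torsion class is biclosed, i.e.\ conditions (ii) and (iii).

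For (ii), suppose $0\to M(w_1)\to M(w_3)\to M(w_2)\to 0$ is exact with $M(w_1),M(w_2)\in\mathcal{T}$. Since $\mathcal{T}$ is extension closed by definition of a torsion class, $M(w_3)\in\mathcal{T}$, giving weak extension closure.

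For (iii), I will argue by contrapositive: suppose $0\to M(w_1)\to M(w_3)\to M(w_2)\to 0$ is exact and $M(w_3)\in\mathcal{T}$; I must show that either $M(w_1)\in\mathcal{T}$ or $M(w_2)\in\mathcal{T}$. This follows immediately from the fact that $\mathcal{T}$ is quotient closed: the short exact sequence exhibits $M(w_2)$ as a quotient of $M(w_3)$, so $M(w_2)\in\mathcal{T}$. Hence $\mathcal{T}$ is weakly extension coclosed.

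I expect no real obstacle here; the main point is simply that ``weak extension (co)closure'' is a weakening of the standard torsion-class axioms when restricted to the case that the outer two modules of an exact sequence are indecomposable. The proof reduces to an immediate unpacking of the definitions together with the representation-finiteness of $\Lambda$, which ensures that every torsion class is additively generated and hence qualifies as an object of $\mathcal{ADD}(Q)$ in the first place.
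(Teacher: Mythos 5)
Your proof is correct and follows essentially the same route as the paper: representation-finiteness gives additive generation, extension-closedness gives weak extension closure, and quotient-closedness gives weak extension coclosure (the paper phrases the last step as a contradiction rather than a contrapositive, which is the same argument). No issues.
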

\begin{proof}
Let $\mathcal{T} \in \text{tors}(\Lambda).$ Since $\Lambda$ is representation-finite and since $\mathcal{T}$ is a torsion class, $\mathcal{T} = \text{add}(\oplus_{i \in [k]} M(w_i))$ for some collection of indecomposables $\{M(w_i)\}_{i \in [k]}$. Since $\mathcal{T}$ is extension closed, it is weakly extension closed.

Assume $0 \to X \to Z \to Y \to 0$ is an exact sequence with $X, Y \not \in \mathcal{T}$. Suppose $Z \in \mathcal{T}$. Then since $\mathcal{T}$ is quotient closed, $Y \in \mathcal{T}$, a contradiction. Thus $\mathcal{T}$ is weakly extension co-closed so $\mathcal{T} \in \mathcal{BIC}(Q).$
\end{proof}

Let $\mathcal{B} \in \mathcal{BIC}(Q)$. Define $\mathbb{X}(\mathcal{B})$ to be the set of objects $X$ of $\mathcal{B}$ up to isomorphism with the property that if one has a surjection $X \twoheadrightarrow Y$ then $Y \in \mathcal{B}$. Observe that $\mathcal{B}$ is quotient closed if and only if $\mathbb{X}(\mathcal{B}) = \mathcal{B}.$ Also, define $\mathbb{Y}(\mathcal{B})$ to be the set of objects $X \in \Lambda\text{-mod}$ up to isomorphism with the property that there exists a nonzero object $Y$ of $\mathcal{B}$ such that $Y \hookrightarrow X.$ Now define maps $\pi_\downarrow, \pi^\uparrow: \mathcal{BIC}(Q) \to \mathcal{ADD}(Q)$ by $$\pi_\downarrow(\mathcal{B}) := \text{add}\left(\displaystyle\bigoplus M : \ M \in \text{ind}(\mathbb{X}(\mathcal{B})) \right)$$ and $$\pi^\uparrow(\mathcal{B}) := \text{add}\left(\displaystyle\bigoplus M : \ M \in \text{ind}(\mathbb{Y}(\mathcal{B})) \right).$$ Clearly, $\pi_\downarrow(\mathcal{B}) \subset \mathcal{B} \subset \pi^\uparrow(\mathcal{B}),$ $\pi_\downarrow \circ \pi_\downarrow = \pi_\downarrow,$ and $\pi^\uparrow \circ \pi^\uparrow = \pi^\uparrow.$

\begin{proposition}\label{pidownintors}
If $\mathcal{B}\in \mathcal{BIC}(Q),$ then $\pi_\downarrow(\mathcal{B}) \in \text{tors}(\Lambda).$ Furthermore, $\pi_\downarrow(\mathcal{BIC}(Q)) = \text{tors}(\Lambda).$ 
\end{proposition}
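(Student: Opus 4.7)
The plan is to prove the statement in two stages: first establish that $\pi_{\downarrow}(\mathcal{B})$ lies in $\text{tors}(\Lambda)$, and then conclude the image equality. For the image equality, note that Lemma~\ref{torsinclude} gives $\text{tors}(\Lambda)\subseteq\mathcal{BIC}(Q)$, and if $\mathcal{T}\in\text{tors}(\Lambda)$ then $\mathcal{T}$ is quotient-closed so $\mathbb{X}(\mathcal{T})=\mathcal{T}$, i.e. $\pi_{\downarrow}(\mathcal{T})=\mathcal{T}$. Combined with the first stage, this yields $\pi_{\downarrow}(\mathcal{BIC}(Q))=\text{tors}(\Lambda)$.

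To show $\pi_{\downarrow}(\mathcal{B})$ is a torsion class, I would verify quotient-closure and extension-closure separately. Quotient-closure is immediate from the definition of $\mathbb{X}(\mathcal{B})$: if $X\in\pi_{\downarrow}(\mathcal{B})$ and $X\twoheadrightarrow Y$, then every quotient of $Y$ is a quotient of $X$ and hence in $\mathcal{B}$, so $Y\in\mathbb{X}(\mathcal{B})=\pi_{\downarrow}(\mathcal{B})$. For extension-closure, consider a short exact sequence $0\to Y\to Z\to X\to 0$ with $X,Y\in\pi_{\downarrow}(\mathcal{B})$ and argue by induction on $\dim Z$. For any proper quotient $Q$ of $Z$, a routine diagram chase shows $Q$ fits into an exact sequence $0\to Y'\to Q\to X'\to 0$ with $Y'$ a quotient of $Y$ and $X'$ a quotient of $X$; by quotient-closure both lie in $\pi_{\downarrow}(\mathcal{B})$, and by the induction hypothesis $Q\in\pi_{\downarrow}(\mathcal{B})\subseteq\mathcal{B}$. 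So the task reduces to verifying that $Z$ itself belongs to $\mathcal{B}$.

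The main obstacle is precisely this last step, because $\mathcal{B}$ is only weakly extension-closed and the middle term $Z$ need not be indecomposable. I would first reduce to the case $X,Y$ indecomposable: if $Y=Y_1\oplus Y_2$, then the pushout of $Y\to Y_2$ gives a sequence $0\to Y_2\to Z/Y_1\to X\to 0$ and $0\to Y_1\to Z\to Z/Y_1\to 0$, both of strictly smaller complexity, and similarly one splits off a direct summand of $X$; induction on the number of indecomposable summands handles this. For indecomposable $X=M(w_1)$ and $Y=M(w_2)$ with $\text{Ext}^{1}(X,Y)\neq 0$, I would invoke the classification in Lemma~\ref{extq_n} (cyclic case) or the analogous description for type $\mathbb{A}$ coming from the $[\text{CS}14]$ basis of $\text{Ext}^{1}$ of string modules. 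In case (i), $Z=M(w_{2}\leftarrow w_{1})$ is indecomposable and lies in $\mathcal{B}$ directly by weak extension-closure. In case (ii), $Z=M(u\leftarrow w\leftarrow v)\oplus M(w)$; here $M(w)$ is a quotient of $M(w_{2})$, so $M(w)\in\mathcal{B}$ since $M(w_{2})\in\pi_{\downarrow}(\mathcal{B})$, while $M(v)$ is a quotient of $M(w_{1})$, so $M(v)\in\mathcal{B}$, and the filtration $0\to M(w_{2})\to M(u\leftarrow w\leftarrow v)\to M(v)\to 0$ together with weak extension-closure of $\mathcal{B}$ places the first summand in $\mathcal{B}$ as well. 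For a general nonsplit extension class that is a sum of basis extensions, the Yoneda-style decomposition lets one factor the extension through intermediate modules built from the basis pieces, and applying weak extension-closure at each stage shows $Z\in\mathcal{B}$. The delicate point throughout is that the basis elements of $\text{Ext}^{1}$ must be matched with genuine short exact sequences whose end terms remain inside $\pi_{\downarrow}(\mathcal{B})$ at every intermediate step, and verifying this compatibility using the explicit string-module description is where the real work lies.
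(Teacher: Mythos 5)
Your reduction of the second assertion (every torsion class is a fixed point of $\pi_\downarrow$, plus Lemma~\ref{torsinclude}) is fine and matches the paper. The genuine gap is in the step you call ``immediate'': quotient-closure of $\pi_\downarrow(\mathcal{B})$. By definition $\pi_\downarrow(\mathcal{B})=\text{add}\bigl(\oplus M:\ M\in\text{ind}(\mathbb{X}(\mathcal{B}))\bigr)$, so a typical object $X\in\pi_\downarrow(\mathcal{B})$ is a direct sum of indecomposables each of which has all of its quotients in $\mathcal{B}$; it is \emph{not} given that every quotient of the direct sum lies in $\mathcal{B}$, because a quotient of $X$ need not be a quotient of any single summand. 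For instance, with $Q$ of type $\mathbb{A}_3$ oriented $1\to 2\leftarrow 3$, there is a surjection $M(1\to 2)\oplus M(2\leftarrow 3)\twoheadrightarrow M(1\to 2\leftarrow 3)$ although neither summand surjects onto the target. So your sentence ``every quotient of $Y$ is a quotient of $X$ and hence in $\mathcal{B}$'' silently assumes $X\in\mathbb{X}(\mathcal{B})$, which holds for indecomposable $X$ but is exactly what must be proved for decomposable $X$. This is the substantial half of the paper's Lemma~\ref{quotclosed}: given a surjection $f:\oplus_i M(w^{(i)})^{a_i}\twoheadrightarrow M(v)$ with no summand surjecting, one uses Lemma~\ref{dimhom} to factor a nonzero component as $M(w^{(i)})\twoheadrightarrow M(u)\hookrightarrow M(v)$, writes $v=v^{(1)}\rightarrow u\leftarrow v^{(2)}$, and runs an induction on $\dim_\Bbbk M(v)$ together with weak extension closure of $\mathcal{B}$. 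Nothing in your proposal supplies this argument.

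The gap propagates: your induction for extension-closure invokes ``by quotient-closure both lie in $\pi_\downarrow(\mathcal{B})$'' for quotients of the (possibly decomposable) end terms $Y$ and $X$, so as written the argument is circular --- full quotient-closure can indeed be recovered from extension-closure plus quotient-closure for indecomposable sources, but then the two statements must be proved by a single simultaneous induction (or quotient-closure must be proved outright first, as the paper does before Lemmas~\ref{weakext} and~\ref{extclosed}), and your proposal sets up neither. Two smaller remarks: the closing worry about ``a general nonsplit extension class that is a sum of basis extensions'' is vacuous here, since Lemma~\ref{dimext} gives $\dim_\Bbbk\Ext^1_\Lambda(M(u),M(v))\le 1$ for indecomposables, so the classification of Proposition~\ref{extclassif}/Lemma~\ref{extq_n} (and its type $\mathbb{A}$ analogue from \cite{cs14}) already describes the unique nonsplit extension; and your verification of case (ii), where you place $M(u\leftarrow w\leftarrow v)$ in $\mathcal{B}$ via $0\to M(w_2)\to M(u\leftarrow w\leftarrow v)\to M(v)\to 0$ and weak extension closure, is essentially the paper's computation in Lemma~\ref{extclosed}, so that portion is on the right track once the quotient-closure foundation is repaired.
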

\begin{proof}
Given $\mathcal{B} \in \mathcal{BIC}(Q)$, Lemma~\ref{quotclosed} shows that $\pi_\downarrow(\mathcal{B})$ is a full, additive, quotient closed subcategory of $\Lambda$-mod. That $\pi_\downarrow(\mathcal{B})$ is extension closed follows from Lemma~\ref{extclosed}. Thus $\pi_\downarrow(\mathcal{B}) \in \text{tors}(\Lambda).$ The second assertion now follows from Lemma~\ref{torsinclude}.
\end{proof}

\begin{theorem}\label{lattquot}
Let $\Theta$ denote the equivalence relation on $\mathcal{BIC}(Q)$ where $\mathcal{B}_1 \equiv \mathcal{B}_2 \text{ mod } \Theta$ if and only if $\pi_\downarrow(\mathcal{B}_1) = \pi_\downarrow(\mathcal{B}_2).$ Then $\pi_\downarrow: \mathcal{BIC}(Q) \to \text{tors}(\Lambda)$ is a lattice quotient map. In particular, $\overrightarrow{EG}(\widehat{Q}) \cong \mathcal{BIC}(Q)/\Theta.$
\end{theorem}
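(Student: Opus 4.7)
The plan is to apply Lemma~\ref{lattconglemma} to the lattice $L=\mathcal{BIC}(Q)$ with the pair of idempotent maps $\pi_\downarrow, \pi^\uparrow$. The lemma requires three things beyond what is already apparent from the definitions: (i) both $\pi_\downarrow$ and $\pi^\uparrow$ are endomorphisms of $\mathcal{BIC}(Q)$ (not just of $\mathcal{ADD}(Q)$); (ii) both maps are order-preserving; and (iii) the composition identities $\pi_\downarrow\circ\pi^\uparrow=\pi_\downarrow$ and $\pi^\uparrow\circ\pi_\downarrow=\pi^\uparrow$ hold. Once these are in place, Lemma~\ref{lattconglemma} gives that $\Theta$ is a lattice congruence and hence $\pi_\downarrow:\mathcal{BIC}(Q)\to\mathcal{BIC}(Q)/\Theta$ is a lattice quotient map.

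For (i), Proposition~\ref{pidownintors} already shows $\pi_\downarrow(\mathcal{B})\in\mathrm{tors}(\Lambda)\subseteq\mathcal{BIC}(Q)$ via Lemma~\ref{torsinclude}. The analogous statement for $\pi^\uparrow$ should be handled by a dual argument using the standard duality $D(-)=\Hom_\Lambda(-,\Bbbk)$ of Lemma~\ref{standarddual}: namely, $D\pi^\uparrow(\mathcal{B}) = \pi_\downarrow(D\mathcal{B})$ should land in $\mathrm{tors}(\Lambda^{\op})$, so $\pi^\uparrow(\mathcal{B})$ is a functorially finite torsion-free class. Order-preservation of $\pi_\downarrow$ follows because if $\mathcal{B}_1\subseteq\mathcal{B}_2$ and every surjection out of $X$ lands in $\mathcal{B}_1$, then certainly in $\mathcal{B}_2$; the argument for $\pi^\uparrow$ is similar.

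The main obstacle is (iii), which I expect is exactly what Section~\ref{aboutpiupanddown} is built to supply. For $\pi_\downarrow\circ\pi^\uparrow=\pi_\downarrow$, the inclusion $\pi_\downarrow(\mathcal{B})\subseteq\pi_\downarrow(\pi^\uparrow(\mathcal{B}))$ is immediate from $\mathcal{B}\subseteq\pi^\uparrow(\mathcal{B})$ and order-preservation. For the reverse, take an indecomposable $X\in\pi_\downarrow(\pi^\uparrow(\mathcal{B}))$: then $X\in\pi^\uparrow(\mathcal{B})$, so some submodule of $X$ lies in $\mathcal{B}$, and every quotient of $X$ also lies in $\pi^\uparrow(\mathcal{B})$. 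One must argue that these two conditions force $X\in\mathcal{B}$ and that every quotient of $X$ is in $\mathcal{B}$. The key leverage is the extension classification of Lemma~\ref{extq_n} (and its type~$\mathbb{A}$ analogue via string modules from Lemma~\ref{typeA}): given a hypothetical surjection $X\twoheadrightarrow Y$ with $Y\notin\mathcal{B}$ but $Y\in\pi^\uparrow(\mathcal{B})$, the explicit form of the short exact sequences involving string modules in $\Lambda$ produces either a biclosedness violation in $\mathcal{B}$ or a contradiction with $X\in\pi_\downarrow(\pi^\uparrow(\mathcal{B}))$. The identity $\pi^\uparrow\circ\pi_\downarrow=\pi^\uparrow$ follows by the dual argument.

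Having established the congruence, the image of $\pi_\downarrow$ inside $\mathcal{BIC}(Q)$ forms a transversal for $\Theta$-classes and equals $\mathrm{tors}(\Lambda)$ by Proposition~\ref{pidownintors}; this yields $\pi_\downarrow:\mathcal{BIC}(Q)\to\mathrm{tors}(\Lambda)$ as a lattice quotient map. For the final assertion, since $Q$ is either of type~$\mathbb{A}$ or is an oriented cycle, the associated cluster-tilted algebra $\Lambda$ is of Dynkin type and therefore representation-finite, so $\mathrm{tors}(\Lambda)=\mathrm{f\text{-}tors}(\Lambda)$. Combining with Proposition~\ref{oregequalsftors} which identifies $\overrightarrow{EG}(\widehat{Q})\cong\mathrm{f\text{-}tors}(\Lambda)$ gives $\overrightarrow{EG}(\widehat{Q})\cong\mathcal{BIC}(Q)/\Theta$, as desired.
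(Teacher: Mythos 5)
Your overall strategy is exactly the paper's: verify the hypotheses of Lemma~\ref{lattconglemma} for the pair $(\pi_\downarrow,\pi^\uparrow)$ on $\mathcal{BIC}(Q)$ --- idempotency, that both maps land in $\mathcal{BIC}(Q)$, order-preservation, and the two composition identities --- with the hard content (that $\pi_\downarrow(\mathcal{B})$ is a torsion class, and Lemma~\ref{piuppidown}) supplied by Section~\ref{aboutpiupanddown}, and then conclude via $\text{tors}(\Lambda)=\text{f-tors}(\Lambda)$ and Proposition~\ref{oregequalsftors}. Your sketch of the proof of $\pi_\downarrow\circ\pi^\uparrow=\pi_\downarrow$ (take a minimal counterexample quotient $Y\notin\mathcal{B}$, use a submodule of $Y$ lying in $\mathcal{B}$ and the string-module structure of the resulting short exact sequences to contradict biclosedness) is in substance the argument of Lemma~\ref{piuppidown}~$a)$.

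One claim in your treatment of step (i) is wrong as stated, though the surrounding idea is right and the error is localized. You assert $D\pi^\uparrow(\mathcal{B})=\pi_\downarrow(D\mathcal{B})$ and conclude that $\pi^\uparrow(\mathcal{B})$ is a functorially finite torsion-free class. Neither holds: $\pi^\uparrow(\mathcal{B})$ consists of modules \emph{containing} a submodule from $\mathcal{B}$, so it is in general not submodule closed and hence not a torsion-free class; and applying $D$ alone turns ``contains a submodule in $\mathcal{B}$'' into ``surjects onto something in $D\mathcal{B}$,'' which is not the defining condition of $\pi_\downarrow$. The correct duality, which is the paper's Lemma~\ref{duality}, involves complementation: $D\bigl(\pi^\uparrow(\mathcal{B})^c\bigr)=\pi_\downarrow\bigl(D\mathcal{B}^c\bigr)$, so that $\pi^\uparrow(\mathcal{B})=D\bigl(\pi_\downarrow(D\mathcal{B}^c)\bigr)^c$ is the \emph{complement} of (the standard dual of) a torsion class of $\Lambda^{\op}$, equivalently the complement of a torsion-free class of $\Lambda$. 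Since $\pi_\downarrow(D\mathcal{B}^c)\in\mathcal{BIC}(Q^{\op})$ by Proposition~\ref{pidownintors} and Lemma~\ref{torsinclude}, and both $D(-)$ and $(-)^c$ preserve biclosedness, one still gets $\pi^\uparrow(\mathcal{B})\in\mathcal{BIC}(Q)$ (this is Lemma~\ref{piupprops}~$a)$), and order-preservation of $\pi^\uparrow$ follows from the same identity. So the gap is repaired by inserting the complementation into your duality step; with that correction your argument coincides with the paper's proof.
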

\begin{proof}
We prove this Theorem by appealing to Lemma~\ref{lattconglemma}. By definition, $\pi_\downarrow$ and $\pi^\uparrow$ are idempotent. By Proposition~\ref{pidownintors} and Lemma~\ref{torsinclude}, we know that $\pi_\downarrow(\mathcal{B}) \in \mathcal{BIC}(Q)$ for any $\mathcal{B} \in \mathcal{BIC}(Q).$ By Lemma~\ref{piupprops} $a),$ we have that $\pi^\uparrow(\mathcal{B}) \in \mathcal{BIC}(Q)$ for any $\mathcal{B} \in \mathcal{BIC}(Q).$ By Lemma~\ref{pidownordpres} and Lemma~\ref{piupprops} $b)$, we know that both $\pi_\downarrow$ and $\pi^\uparrow$ are order-preserving. Lastly, by Lemma~\ref{piuppidown} $a)$ and $b)$, we know that $\pi_{\downarrow}\circ\pi^{\uparrow}=\pi_{\downarrow}$ and $\pi^{\uparrow}\circ\pi_{\downarrow}=\pi^{\uparrow}$. By Lemma~\ref{lattconglemma}, we obtain that $\pi_\downarrow$ is a lattice quotient map. The last assertion immediately follows from the fact that $\text{tors}(\Lambda) \cong \overrightarrow{EG}(\widehat{Q}).$
\end{proof}

\begin{corollary}\label{polygonalflips}
Let $Q$ be either a type $\mathbb{A}$ quiver or a cyclic quiver. Then any two maximal green sequences of $Q$ are connected by a sequence of polygonal flips.  Moreover, every polygon in $\overrightarrow{EG}(\widehat{Q})$ is either a square or pentagon (see Figure~\ref{sqpent_tors}).
\end{corollary}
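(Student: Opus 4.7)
The plan is to deduce the corollary from Lemma~\ref{lem_poly_flip} applied to $\overrightarrow{EG}(\widehat{Q})$, using the lattice quotient $\pi_\downarrow\colon \mathcal{BIC}(Q) \to \text{tors}(\Lambda) \cong \overrightarrow{EG}(\widehat{Q})$ supplied by Theorem~\ref{lattquot}. Since by \cite[Proposition 2.13]{bdp} maximal green sequences of $Q$ correspond bijectively to maximal chains of $\overrightarrow{EG}(\widehat{Q})$, and the length of a maximal green sequence matches the length of its chain, it suffices to prove that $\overrightarrow{EG}(\widehat{Q})$ is polygonal and that every one of its polygons is either a square or a pentagon.

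First I would show that every polygon of $\overrightarrow{EG}(\widehat{Q})$ is the image under $\pi_\downarrow$ of a polygon of $\mathcal{BIC}(Q)$. Given cover pairs $y,z \gtrdot x$ in the quotient, I would choose a representative $\mathcal{B} \in \pi_\downarrow^{-1}(x)$ and use the order-preservation of $\pi_\downarrow$ and $\pi^\uparrow$ together with the identities $\pi_\downarrow \circ \pi^\uparrow = \pi_\downarrow$ and $\pi^\uparrow \circ \pi_\downarrow = \pi^\uparrow$ (all established in Section~\ref{aboutpiupanddown}) to produce covers $\mathcal{B}_1, \mathcal{B}_2$ above $\mathcal{B}$ in $\mathcal{BIC}(Q)$ with $\pi_\downarrow(\mathcal{B}_i)$ equal to $y$ and $z$ respectively. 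By Theorem~\ref{bicapprops} and Corollary~\ref{cor_polygon_ap}, the interval $[\mathcal{B}, \mathcal{B}_1 \vee \mathcal{B}_2]$ in $\mathcal{BIC}(Q)$ is a square or a hexagon, and its image under $\pi_\downarrow$ realises the given polygon.

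The main obstacle is to verify that no hexagon of $\mathcal{BIC}(Q)$ can project to a hexagon in $\overrightarrow{EG}(\widehat{Q})$; instead, it must collapse to a pentagon. A hexagon in $\mathcal{BIC}(Q)$ corresponds under $\mathcal{BIC}(Q) \cong \text{Bic}(Q)$ to a pair of composable acyclic paths $p, q$ with concatenation $p\circ q$, equivalently to a short exact sequence $0 \to M(p) \to M(p\circ q) \to M(q) \to 0$ (the direction depending on the orientation at the composition point). I would analyse $\pi_\downarrow$ at the six vertices $W, W\cup\{p\}, W\cup\{q\}, W\cup\{p, p\circ q\}, W\cup\{q, p\circ q\}, W\cup\{p,q,p\circ q\}$ via the definition of $\pi_\downarrow$ in terms of $\mathbb{X}(\mathcal{B})$: the surjection from $M(p\circ q)$ onto one of $M(p),M(q)$, say $M(q)$, forces $M(p\circ q) \notin \pi_\downarrow(\mathcal{B}_0)$ for any $\mathcal{B}_0$ containing $p\circ q$ but not $q$, so exactly one edge along one of the two maximal chains of the hexagon is contracted while the other chain remains intact. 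The image is therefore a pentagon. Squares are handled by the same analysis with a trivial outcome, and the dual case where the interval has a minimum above which two elements cover is symmetric.

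Combining these pieces, $\overrightarrow{EG}(\widehat{Q})$ is polygonal with squares and pentagons as its only polygons, and Lemma~\ref{lem_poly_flip} then guarantees that any two maximal chains, equivalently any two maximal green sequences of $Q$, are joined by a sequence of polygonal flips.
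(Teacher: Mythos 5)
Your proposal is correct and follows essentially the same route as the paper: both pass through the quotient map $\pi_\downarrow:\mathcal{BIC}(Q)\to\text{tors}(\Lambda)$ of Theorem~\ref{lattquot}, lift a pair of covers to a square or hexagon of $\mathcal{BIC}(Q)$ via Corollary~\ref{cor_polygon_ap} (the paper using the $\pi^\uparrow$-fixed representative you implicitly invoke), rule out a surviving hexagon by observing that the surjection $M(p\circ q)\twoheadrightarrow M(q)$ forces the edge adding $M(p\circ q)$ to be contracted by $\Theta$, and then apply Lemma~\ref{lem_poly_flip} to connect maximal green sequences by polygonal flips. The only differences are cosmetic: the paper quotes preservation of polygonality under lattice quotients rather than rederiving it, and it concludes only ``not a hexagon'' (hence square or pentagon) rather than your slightly stronger, unneeded claim that the image is exactly a pentagon.
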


\begin{figure}[h]
\includegraphics[scale=1]{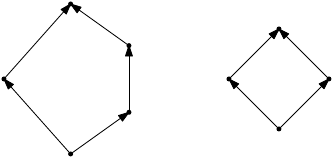}
\caption{The polygons in the oriented exchange graphs from Corollary~\ref{polygonalflips}.}
\label{sqpent_tors}
\end{figure}

\begin{proof}
Since $\mathcal{BIC}(Q)$ is polygonal by Theorem \ref{bicapprops} and polygonality is preserved by lattice quotients (see \cite[Proposition 9-6.9]{ReadingPAB}), Theorem \ref{lattquot} implies that $\overrightarrow{EG}(\widehat{Q})$ is a polygonal lattice. Since maximal green sequences correspond to maximal chains in $\overrightarrow{EG}(\widehat{Q})$, Lemma \ref{lem_poly_flip} implies that any two maximal green sequences are connected by polygonal flips.

Let $\ov{Q},\ov{Q}_1,\ov{Q}_2\in\overrightarrow{EG}(\widehat{Q})$ such that $\ov{Q}_1$ and $\ov{Q}_2$ are distinct ice quivers covering $\ov{Q}$. Let $\Bcal\in\mathcal{BIC}(Q)$ such that $\pi_{\downarrow}(\Bcal)$ is the torsion class corresponding to $\ov{Q}$ and $\pi^{\uparrow}(\Bcal)=\Bcal$. Then there exist $\Bcal_1,\Bcal_2\in\mathcal{BIC}(Q)$ both covering $\Bcal$ such that $\pi_{\downarrow}(\Bcal_i)$ is the torsion class corresponding to $\ov{Q}_i$ for $i=1,2$. Then $[\Bcal,\Bcal_1\vee\Bcal_2]$ is a polygon of $\mathcal{BIC}(Q)$, so it is either a square or hexagon. Restricting $\Theta$ to the interval $[\Bcal,\Bcal_1\vee\Bcal_2]$, the polygon $[\ov{Q},\ov{Q}_1\vee\ov{Q}_2]$ is a lattice quotient of a square or hexagon as in Figure~\ref{sqpent}. Hence, this interval is either a square, pentagon, or hexagon.

Suppose $[\Bcal,\Bcal_1\vee\Bcal_2]$ is a hexagon, and let $M(u_1)$ and $M(u_2)$ be the unique indecomposables in $\Bcal\setminus\Bcal_1$ and $\Bcal\setminus\Bcal_2$, respectively. By the description of polygons in the proof of Corollary \ref{cor_polygon_ap}, there exists an extension, without loss of generality, of the form $0\ra M(u_1)\ra M(w)\ra M(u_2)\ra 0$. Then the covering relation $\Bcal_1\lessdot\Bcal_1\cup\text{add}(M(w))$ is contracted by $\Theta$. Hence, $[\ov{Q},\ov{Q}_1\vee\ov{Q}_2]$ cannot be a hexagon.
\end{proof}

We now address a conjecture on the lengths of maximal green sequences (see \cite[Conjecture 2.22]{bdp}) and give an affirmative answer when $Q$ is a type $\mathbb{A}$ quiver or a cyclic quiver. Let $\text{green}_\ell(Q) :=\{\textbf{i} \in \text{green}(Q): \ \ell en(\textbf{i}) = \ell \}$ be the set of maximal green sequences of length $\ell$.

\begin{corollary}\label{lengths}
Let $Q$ be either a type $\mathbb{A}$ quiver or a cyclic quiver. Then the set $\{\ell \in \mathbb{N}:  \text{green}_\ell(Q) \neq \emptyset\}$ is an interval in $\mathbb{N}$.
\end{corollary}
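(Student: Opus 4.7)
The plan is to leverage Corollary~\ref{polygonalflips} together with the bijection between maximal green sequences of $Q$ and maximal chains of $\overrightarrow{EG}(\widehat{Q})$ (which preserves length, by \cite[Proposition 2.13]{bdp}). The key observation is simply that each polygonal flip changes the length of a maximal chain by at most $1$, so the set of attainable lengths forms a connected subset of $\mathbb{N}$.

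More precisely, I would first record the effect of a polygonal flip on chain length. By Corollary~\ref{polygonalflips}, every polygon in $\overrightarrow{EG}(\widehat{Q})$ is either a square or a pentagon. In a square, the two maximal chains both have length $2$, so a flip through a square preserves the length of any maximal chain containing it. In a pentagon, the two maximal chains have lengths $2$ and $3$, so a flip through a pentagon changes the length of any maximal chain containing it by exactly $\pm 1$. In either case, a single polygonal flip changes the length of a maximal chain by an element of $\{-1,0,+1\}$.

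Now given two maximal green sequences $\mathbf{i}, \mathbf{i}^\pr$ of $Q$ with $\ell en(\mathbf{i}) = \ell_1 < \ell_2 = \ell en(\mathbf{i}^\pr)$, the corresponding maximal chains $C_{\mathbf{i}}, C_{\mathbf{i}^\pr}$ of $\overrightarrow{EG}(\widehat{Q})$ have lengths $\ell_1$ and $\ell_2$ respectively. By Corollary~\ref{polygonalflips}, there exists a sequence of maximal chains $C_{\mathbf{i}} = C_0, C_1, \ldots, C_N = C_{\mathbf{i}^\pr}$ such that consecutive chains differ by a polygonal flip. The sequence of lengths $\ell en(C_0), \ell en(C_1), \ldots, \ell en(C_N)$ is then a walk in $\mathbb{Z}$ starting at $\ell_1$, ending at $\ell_2$, and taking steps in $\{-1,0,+1\}$. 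By the discrete intermediate value property of such walks, every integer $\ell$ with $\ell_1 \le \ell \le \ell_2$ appears as the length of some $C_i$, hence (pulling back under the bijection of \cite[Proposition 2.13]{bdp}) as the length of some maximal green sequence of $Q$. This shows that $\{\ell \in \mathbb{N} : \text{green}_\ell(Q) \neq \emptyset\}$ is an interval in $\mathbb{N}$.

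There is essentially no obstacle here beyond invoking the preceding results: the corollary is a clean downstream consequence of the polygonality of $\overrightarrow{EG}(\widehat{Q})$ combined with the explicit description of its polygons. The only point that requires a moment's attention is the verification that a polygonal flip changes chain length by at most one, which reduces to inspecting the two admissible polygon shapes (squares and pentagons) identified in Corollary~\ref{polygonalflips}.
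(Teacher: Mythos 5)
Your proposal is correct and follows essentially the same route as the paper: connect any two maximal green sequences (viewed as maximal chains of $\overrightarrow{EG}(\widehat{Q})$) by polygonal flips via Corollary~\ref{polygonalflips}, observe that each flip changes chain length by at most one since the polygons are squares or pentagons, and conclude by the discrete intermediate value argument. The only cosmetic difference is that the paper anchors the argument at maximal green sequences of minimal and maximal length, whereas you work with an arbitrary pair, which is equivalent.
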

\begin{proof}
Since $\overrightarrow{EG}(\widehat{Q})$ is nonempty and finite when $Q$ is a type $\mathbb{A}$ quiver or a cyclic quiver, $\text{green}(Q) \neq \emptyset.$ Furthermore, $\overrightarrow{EG}(\widehat{Q})$ is a finite lattice so it has only finitely many maximal chains. Thus $\overrightarrow{EG}(\widehat{Q})$ has only finitely many maximal green sequences. Let $\textbf{i}_{\text{min}}$ (resp., $\textbf{i}_{\text{max}}$) be a maximal green sequence of $Q$ of smallest (resp., largest) length. Let $\ell_{\text{min}}:= \ell en(\textbf{i}_{\text{min}})$ and $\ell_{\text{max}}:= \ell en(\textbf{i}_{\text{max}})$. By Lemma~\ref{lem_poly_flip} and by regarding maximal green sequences as maximal chains in $\overrightarrow{EG}(\widehat{Q})$, there exists maximal green sequences $\textbf{i}_{\text{min}} = \textbf{i}_0, \textbf{i}_1, \ldots, \textbf{i}_k = \textbf{i}_{\text{max}}$ where $\textbf{i}_j \in \text{green}(Q)$ and where $\textbf{i}_j$ and $\textbf{i}_{j+1}$ differ by a polygonal flip for all $j$. By Corollary~\ref{polygonalflips}, $|\ell en(\textbf{i}_j) - \ell en(\textbf{i}_{j-1})| \le 1$ for each $j \in [k].$ Thus for each $\ell \in [\ell_{\text{min}},\ell_{\text{max}}]$ there exists $\textbf{i} \in \text{green}(Q)$ such that $\ell en(\textbf{i}) = \ell$.
\end{proof}

\begin{remark}
In \cite{kase}, it is shown that if $Q$ is a path quiver, then $\{\ell \in \mathbb{N}:  \text{green}_\ell(Q) \neq \emptyset\} = \left[n, \frac{n(n+1)}{2}\right].$ Note that here $ \frac{n(n+1)}{2} = |\textbf{c}\text{-vec}^+(Q)|.$ If $Q$ is mutation-equivalent to a path quiver, we only know that $\{\ell \in \mathbb{N}:  \text{green}_\ell(Q) \neq \emptyset\}$ is an interval in $\mathbb{N}$ that is contained in $\left[n, \frac{n(n+1)}{2}\right].$ For example, if $Q$ is the cyclic quiver appearing in Example~\ref{stringsA3}, then its maximal green sequences are of length 4 or 5.
\end{remark}

\begin{example}
Let $Q = Q(3)$ and let $\Lambda = \Bbbk Q(3)/\langle \alpha_1\alpha_2: \ \alpha_i \in Q(3)_1\rangle.$ In Figure~\ref{bicq3}, we show how $\pi_\downarrow$ maps elements of $\mathcal{BIC}(Q(3))$ to elements of $\text{tors}(\Lambda)$ using the notation in Example~\ref{torsexample} for additively generated subcatgories of $\Lambda$-mod. For instance, $\text{add}\left(X(3,2)\oplus X(2,1) \oplus X(2,2)\right) \in \mathcal{BIC}(Q(3))$ is represented as $\ \includegraphics[scale=1.25]{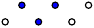}.$ As in Figure~\ref{fig_lattice_quotient}, blue edges of $\mathcal{BIC}(Q(3))$ indicate edges that will be contracted to form $\text{tors}(\Lambda).$

\begin{figure}[h]
\includegraphics[scale=1.3]{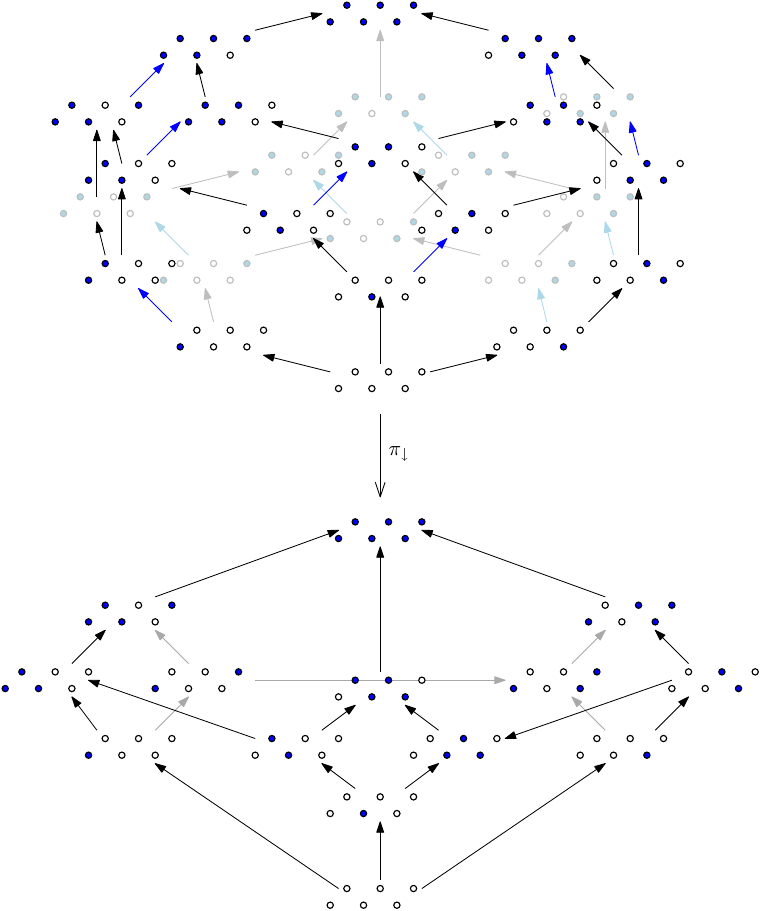}
\caption{The map $\pi_\downarrow: \mathcal{BIC}(Q(3)) \to \text{tors}(\Lambda).$}
\label{bicq3}
\end{figure}

\end{example}

\section{Properties of $\pi_\downarrow$ and $\pi^\uparrow$}\label{aboutpiupanddown}

In this section, we prove several lemmas that establish important properties satisfied by $\pi_\downarrow$ and $\pi^\uparrow$. Throughout this section, we assume that $\Lambda = \Bbbk Q/I$ is the cluster-tilted algebra defined by a quiver $Q$, which is either a cyclic quiver or of type $\mathbb{A}$. Before presenting these lemmas and their proofs, we introduce some additional notation for string modules. Let $M(w) \in \text{ind}(\Lambda\text{-mod})$ be a string module with $$w = x_1 \stackrel{\alpha_1}{\longleftrightarrow} x_2 \cdots x_{i} \stackrel{\alpha_i}{\longleftrightarrow} x_{i+1} \cdots x_m \stackrel{\alpha_{m}}{\longleftrightarrow} x_{m+1}. $$ Define $\text{Pred}(\alpha_i) := x_1 \stackrel{\alpha_1}{\longleftrightarrow} x_2 \cdots x_{i-1} \stackrel{\alpha_{i-1}}{\longleftrightarrow} x_{i}$ and $\text{Succ}(\alpha_i) := x_{i+1} \stackrel{\alpha_{i+1}}{\longleftrightarrow} x_{i+2} \cdots x_{m} \stackrel{\alpha_{m}}{\longleftrightarrow} x_{m+1}.$ 

\begin{lemma}\label{quotclosed}
If $\mathcal{B} \in \mathcal{BIC}(Q),$ then $\pi_\downarrow(\mathcal{B})$ is a full, additive, quotient closed subcategory of $\Lambda$-mod.
\end{lemma}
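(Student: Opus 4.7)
The fact that $\pi_\downarrow(\mathcal{B})$ is full and additive is immediate from its construction as the additive closure of a set of indecomposables, so the content of the lemma is quotient closedness. My plan is to first observe a useful property of the class $\mathbb{X}(\mathcal{B})$ itself: if $X \in \mathbb{X}(\mathcal{B})$ and $X \twoheadrightarrow Y$ is a surjection, then $Y \in \mathcal{B}$ by the defining property of $\mathbb{X}(\mathcal{B})$, and for any further surjection $Y \twoheadrightarrow Z$ the composition $X \twoheadrightarrow Z$ forces $Z \in \mathcal{B}$; hence $Y \in \mathbb{X}(\mathcal{B})$. In particular, any indecomposable quotient of a module in $\mathbb{X}(\mathcal{B})$ is again in $\mathbb{X}(\mathcal{B})$, which settles the lemma whenever $M \in \pi_\downarrow(\mathcal{B})$ is itself indecomposable.

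For the general case, I would write $M = \bigoplus_{i=1}^k M_i$ with each $M_i \in \mathbb{X}(\mathcal{B})$ and proceed by induction on $k$. Setting $M = M' \oplus M_k$ and $Z_0 = p(M_k)$ for a surjection $p\colon M \twoheadrightarrow N$ yields a short exact sequence $0 \to Z_0 \to N \to N/Z_0 \to 0$ in which $Z_0$ is a quotient of $M_k$ and $N/Z_0$ is a quotient of $M'$, so both endpoints lie in $\pi_\downarrow(\mathcal{B}) \subseteq \mathcal{B}$ by induction together with the single-summand case. Consequently, all indecomposable summands of these endpoints lie in $\mathbb{X}(\mathcal{B})$.

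The main obstacle will be promoting this to $N \in \pi_\downarrow(\mathcal{B})$, since the weak extension closedness of $\mathcal{B}$ only applies to short exact sequences of indecomposables while $Z_0$ and $N/Z_0$ need not be indecomposable. I would handle this by picking an indecomposable summand $N_j$ of $N$, refining the induced filtration on $N_j$ into a sequence of indecomposable subquotients, and invoking the classification of extensions between indecomposable $\Lambda$-modules (Lemma~\ref{extq_n} in the cyclic case and its type $\mathbb{A}$ counterpart) to ensure each successive extension falls within the scope of weak extension closedness. Once $N_j \in \mathcal{B}$ is established, the fact that every quotient of $N_j$ is itself a quotient of $M$ lets me reapply the same induction to verify $N_j \in \mathbb{X}(\mathcal{B})$, concluding $N \in \pi_\downarrow(\mathcal{B})$.
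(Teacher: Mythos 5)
Your opening observation is fine and coincides with the first step of the paper's own argument: if $X\in\mathbb{X}(\mathcal{B})$ and $X\twoheadrightarrow Y$, then every quotient of $Y$ is a quotient of $X$, so $Y\in\mathbb{X}(\mathcal{B})$; this settles the case of an indecomposable module in $\pi_\downarrow(\mathcal{B})$. The problem is the general case, where your plan has a genuine gap at exactly the point you flag as "the main obstacle." From $0\to Z_0\to N\to N/Z_0\to 0$ you propose to pass to the induced filtration $0\subseteq Z_0\cap N_j\subseteq N_j$ of an indecomposable summand $N_j$ and refine it into indecomposable subquotients to which weak extension closedness applies. But the subquotients you obtain this way are a submodule of $Z_0$ (namely $Z_0\cap N_j$) and a submodule of $N/Z_0$ (namely $(N_j+Z_0)/Z_0$), and biclosed subcategories are not closed under submodules, so nothing in your setup puts these pieces in $\mathcal{B}$. (The fix is to work with the image $q(M_k)\subseteq N_j$ of the summand $M_k$ under the composite $M\twoheadrightarrow N\twoheadrightarrow N_j$, which \emph{is} a quotient of $M_k$, and with $N_j/q(M_k)$, which is a quotient of $M'$ --- but that is not what "the induced filtration" gives you.) Moreover, even with pieces known to lie in $\mathcal{B}$, the refinement step is where all the work lies: weak extension closedness only covers short exact sequences in which both end terms (and the middle) are indecomposable, so you must decompose the submodule and the quotient of the string module $N_j$ into their alternating string pieces and rebuild $N_j$ one piece at a time, checking at each stage that the partial string is indecomposable and that both end terms are already in $\mathcal{B}$. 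Citing Lemma~\ref{extq_n} and its type $\mathbb{A}$ counterpart does not supply this: those results describe the middle term of the unique nonsplit extension between two \emph{given} indecomposables, not the decomposition of a filtration of a given indecomposable into admissible extensions. As written, the proposal names the obstacle but does not overcome it.

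For comparison, the paper takes a different and self-contained route for the general case: it inducts on $\dim_\Bbbk M(v)$ for an indecomposable quotient $f:X\twoheadrightarrow M(v)$ with no summand of $X$ surjecting onto $M(v)$. A nonzero component $M(w^{(i)})\to M(v)$ factors, by Lemma~\ref{dimhom}, as $M(w^{(i)})\twoheadrightarrow M(u)\hookrightarrow M(v)$ through a common substring $u$; hence $M(u)\in\mathcal{B}$ because it is a quotient of an indecomposable in $\mathbb{X}(\mathcal{B})$. Writing $v=v^{(1)}\rightarrow u\leftarrow v^{(2)}$, the complementary pieces $M(v^{(j)})$ are quotients of $M(v)$, hence of $X$, of strictly smaller dimension, so they lie in $\mathcal{B}$ by induction, and two applications of weak extension closedness (all terms indecomposable) give $M(v)\in\mathcal{B}$. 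The key difference is that the paper's submodule piece $M(u)$ comes equipped with a surjection from a summand of $X$, so its membership in $\mathcal{B}$ is free, whereas your filtration produces pieces whose membership in $\mathcal{B}$ you cannot certify. If you want to salvage your induction on the number of summands, you would need to replace the induced filtration by the image filtration and then carry out the alternating-substring build-up in detail; at that point the argument is no shorter than the paper's.
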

\begin{proof}
By the definition of $\pi_\downarrow(\mathcal{B})$, it is clear that $\pi_\downarrow(\mathcal{B})$ is a full, additive subcategory of $\Lambda$-mod.

Next, we show that $\pi_\downarrow(\mathcal{B})$ is quotient closed. Suppose that we have a surjection $M(w_1) \twoheadrightarrow M(w_2)$ where $M(w_1) \in \pi_\downarrow(\mathcal{B})$. Now if there exists a surjection $M(w_2) \twoheadrightarrow Y$ for some $Y \in \Lambda\text{-mod}$, then we have a surjection $M(w_1) \twoheadrightarrow Y$. Thus $Y \in \mathcal{B}$ so $M(w_2) \in \pi_\downarrow(\mathcal{B})$. 

Now suppose $X \in \pi_{\downarrow}(\mathcal{B})$ and we have a surjection $f: X = \oplus_{i \in [k]}M(w^{(i)})^{a_i} \twoheadrightarrow M(v)$ for some integers $a_i \ge 0$ and no summand $M(w^{(i)})$ surjects onto $M(v)$. Furthermore, suppose that an indecomposable $M(u)$ belongs to $\mathcal{B}$ if $\text{dim}_\Bbbk(M(u)) < \text{dim}_\Bbbk(M(v))$ and has the property that an object of $\pi_{\downarrow}(\mathcal{B})$ surjects onto $M(u)$. Let $M(w^{(i)})$ be a summand of $X$ where the component map $g: M(w^{(i)}) \to M(v)$ of $f$ is nonzero. By Lemma~\ref{dimhom}, there exists a nonempty string $u$ such that $M(w^{(i)}) \twoheadrightarrow M(u) \hookrightarrow M(v)$. This implies that $M(u) \in \mathcal{B}.$

We can now write $v = v^{(1)} \rightarrow u \leftarrow v^{(2)}$ where $v^{(1)}$ or $v^{(2)}$ is a nonempty string. Since $f$ is a surjection, we obtain surjections $X \twoheadrightarrow M(v^{(j)})$ for $j = 1, 2$. As $\text{dim}_\Bbbk(M(v^{(j)})) < \text{dim}_\Bbbk(M(v))$ for $j = 1, 2$, we see that $M(v^{(1)}), M(v^{(2)}) \in \mathcal{B}$. Since $\mathcal{B}$ is weakly extension closed, we obtain that $M(v) \in \mathcal{B}$. Consequently, for any surjection $X \twoheadrightarrow Y$ where $X \in \pi_\downarrow(\mathcal{B})$ and any $Y \in \Lambda\text{-mod}$, we have that $Y \in \mathcal{B}$.

Finally, assume we have $X \twoheadrightarrow Y$ and $Y \twoheadrightarrow Z$ where $X \in \pi_\downarrow(\mathcal{B})$ and $Y, Z \in \Lambda\text{-mod}$. Composing these surjections produces a surjection $X \twoheadrightarrow Z$. From the previous paragraph, we have that $Z \in \mathcal{B}$. Thus $Y \in \pi_\downarrow(\mathcal{B}).$
\end{proof}

\begin{lemma}\label{weakext}
If $\mathcal{B} \in \mathcal{BIC}(Q),$ then $\pi_\downarrow(\mathcal{B})$ is weakly extension closed.
\end{lemma}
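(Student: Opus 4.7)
The plan is to verify that $M(w_3)\in\pi_\downarrow(\mathcal{B})$, which amounts to showing $M(w_3)\in\mathcal{B}$ and that every quotient of $M(w_3)$ lies in $\mathcal{B}$. The first is immediate: applying weak extension closure of $\mathcal{B}$ to the given exact sequence with $M(w_1),M(w_2)\in\pi_\downarrow(\mathcal{B})\subseteq\mathcal{B}$ yields $M(w_3)\in\mathcal{B}$.

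For the quotient condition, I would reduce (using that $\mathcal{B}$ is additively generated) to showing that every indecomposable quotient $M(u)$ of $M(w_3)$ lies in $\mathcal{B}$. Fix a surjection $f\colon M(w_3)\twoheadrightarrow M(u)$ and set $N:=f(M(w_1))\subseteq M(u)$. Then $N$ is a quotient of $M(w_1)\in\pi_\downarrow(\mathcal{B})$ and $M(u)/N$ is a quotient of $M(w_2)\in\pi_\downarrow(\mathcal{B})$, so both belong to $\mathcal{B}$. If $N=0$, then $M(u)$ is itself a quotient of $M(w_2)$ and we are done.

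Otherwise, we have a short exact sequence $0\to N\to M(u)\to M(u)/N\to 0$ with ends in $\mathcal{B}$, and we aim to apply weak extension closure of $\mathcal{B}$. In the cyclic quiver case, Lemma~\ref{extq_n} forces (since $M(w_3)$ is indecomposable) case (i), so $w_3=w_1\leftarrow w_2$ is a directed string with $\supp(M(w_1))\cap\supp(M(w_2))=\emptyset$. Indecomposable quotients of a directed string module correspond to rightmost substrings, so when $u$ is not contained in $w_2$ we have $u=u_1'\leftarrow w_2$ for a nonempty rightmost substring $u_1'$ of $w_1$. Then $N=M(u_1')$ and $M(u)/N=M(w_2)$ are both indecomposable and in $\mathcal{B}$, and weak extension closure of $\mathcal{B}$ gives $M(u)\in\mathcal{B}$.

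The main obstacle will be the type $\mathbb{A}$ case, where strings may change direction, so submodules and quotients of a string module can themselves be decomposable. My plan there is to invoke the classification of extensions between indecomposable string modules from \cite{cs14} and perform a case analysis on how $M(u)$ sits with respect to the canonical filtration $M(w_1)\subseteq M(w_3)$. The indecomposability of $M(u)$ will restrict the possible shapes of $N$ and $M(u)/N$, and in each case from the extension classification I expect to verify that the induced sequence reduces to one between indecomposable string modules with ends in $\mathcal{B}$, at which point weak extension closure of $\mathcal{B}$ completes the argument.
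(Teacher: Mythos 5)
Your setup coincides with the paper's: first get $M(w_3)\in\mathcal{B}$ from weak extension closure of $\mathcal{B}$, reduce to indecomposable quotients $M(u)$ of $M(w_3)$, and split $M(u)$ into $N=f(M(w_1))$ and $M(u)/N$, which are quotients of $M(w_1)$ and $M(w_2)$ and hence lie in $\mathcal{B}$ (the paper writes down exactly this short exact sequence, just in explicit substring notation). Your $N=0$ case and your cyclic case are correct. The genuine gap is that the type $\mathbb{A}$ case---the substantial case of the lemma---is never proved: the last paragraph is a plan (``I expect to verify''), and the point you defer, namely that $N$ and $M(u)/N$ could a priori be decomposable, is precisely what must be settled before weak extension closure of $\mathcal{B}$ (which, as defined, applies only to sequences of indecomposable string modules) can be invoked. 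Moreover the tool you propose, the extension classification of \cite{cs14}, is not the right one here: that classification describes nonsplit extensions (and is what the paper uses for Lemma~\ref{extclosed}), whereas your sequence $0\to N\to M(u)\to M(u)/N\to 0$ arises from a quotient map, and what is needed is the substring description of quotients of a string module.

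The missing step closes quickly, and this is how the paper argues. Since $M(w_3)$ is an indecomposable extension of $M(w_2)$ by $M(w_1)$, the string $w_3$ is the concatenation of $w_1$ and $w_2$ along a single arrow (pointing into the $w_1$ side, as $M(w_1)$ is the submodule). An indecomposable quotient $M(u)$ of $M(w_3)$ is given by a contiguous substring $u$ of $w_3$ whose boundary arrows point out of $u$, and by Lemma~\ref{dimhom} (or Lemma~\ref{injsurj1dim}) the surjection $f$ is, up to scalar, the canonical graph map. Hence $N$ is spanned by the basis vectors of $u$ lying in the $w_1$-part and $M(u)/N$ by those in the $w_2$-part; since $u$ is contiguous and $w_1,w_2$ are the two ends of $w_3$, both intersections are contiguous substrings, so $N$ and $M(u)/N$ are indecomposable string modules or zero, even in type $\mathbb{A}$. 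If one of them is zero, the boundary-arrow condition shows $M(u)$ is a quotient of $M(w_2)$ or of $M(w_1)$ alone (this is the paper's first case, using Lemma~\ref{quotclosed}); otherwise both pieces lie in $\pi_\downarrow(\mathcal{B})\subseteq\mathcal{B}$ and weak extension closure of $\mathcal{B}$ gives $M(u)\in\mathcal{B}$, exactly as you intended (the paper dresses this last step in a minimal-counterexample argument, but the direct appeal works once indecomposability of the two pieces is in hand).
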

\begin{proof}
Let $0 \to M(w_2) \to Z \to M(w_1) \to 0$ be an extension where $M(w_2), M(w_1) \in \pi_\downarrow(\mathcal{B})$ and where $Z \in \text{ind}(\mathcal{B}).$ Then $Z \in \text{ind}(\mathcal{B})$ since $\mathcal{B}$ is weakly extension closed. It is easy to see that $Z = M(w_2 \stackrel{\alpha}{\longleftarrow} w_1).$ Let $w_2 = x^{(2)}_1 \stackrel{\alpha^{(2)}_1}{\longleftrightarrow} x^{(2)}_2 \cdots x^{(2)}_{n_2-1} \stackrel{\alpha^{(2)}_{n_2-1}}{\longleftrightarrow} x^{(2)}_{n_2}$ and let $w_1 = x^{(1)}_1 \stackrel{\alpha^{(1)}_1}{\longleftrightarrow} x^{(1)}_2 \cdots x^{(1)}_{n_1-1} \stackrel{\alpha^{(1)}_{n_1-1}}{\longleftrightarrow} x^{(1)}_{n_1}.$

To show that $\pi_\downarrow(\mathcal{B})$ is weakly extension closed we must show that for any surjection $p: M(w_2 \stackrel{\alpha}{\longleftarrow} w_1) \twoheadrightarrow M(u)$ one has $M(u) \in \mathcal{B}.$ Suppose we have such a surjection and suppose that $u$ is substring of $w_1.$ Then we have $$u = x^{(1)}_i \stackrel{\alpha^{(1)}_i}{\longleftrightarrow} x^{(1)}_{i+1} \cdots x^{(1)}_{j-1} \stackrel{\alpha^{(1)}_{j-1}}{\longleftrightarrow} x^{(1)}_{j}$$ where $i, j \in [n_1]$ and $i \le j$. Let $\beta \in Q_1$ be an arrow that appears in $w_2 \stackrel{\alpha}{\longleftarrow} w_1$ with exactly one of its vertices belonging to $u$. Such an arrow $\beta$ belongs to the set $\{\alpha, \alpha^{(1)}_1, \ldots, \alpha_{n_1-1}^{(1)}\}$. Since $p$ is a surjection, the unique vertex of $\beta$ that belongs to $u$ is the source of $\beta$. Thus we have that $M(w_1) \twoheadrightarrow M(u).$ Therefore, $M(u) \in \pi_\downarrow(\mathcal{B})$ by Lemma~\ref{quotclosed} so $M(u) \in \mathcal{B}$. An analogous proof shows that $M(u) \in \mathcal{B}$ if $u$ is a substring of $M(w_2).$

To complete the proof, we need to show that if $p: M(w_2 \stackrel{\alpha}{\longleftarrow} w_1) \twoheadrightarrow M(u)$ for some string $u = x^{(2)}_i \stackrel{}{\longleftrightarrow} \cdots  \stackrel{}{\longleftrightarrow} x^{(1)}_{j}$ with $i \in [n_2]$ and $j \in [n_1],$ then $M(u) \in \mathcal{B}.$ Suppose to the contrary that $M(w_2 \stackrel{\alpha}{\longleftarrow} w_1)$ is of minimal dimension with the property that $p: M(w_2 \stackrel{\alpha}{\longleftarrow} w_1) \twoheadrightarrow M(u)$ is a surjection where $u$ is a string of the above form, but $M(u) \not \in \mathcal{B}$. Since $M(w_2\stackrel{\alpha}{\longleftarrow}w_1) \in \mathcal{B},$ we know that $\dim_\Bbbk(M(u)) < \dim_\Bbbk(M(w_2 \stackrel{\alpha}{\longleftarrow} w_1)).$ Thus $i \neq 1$ or $j \neq n_1.$ We will assume that $i \neq 1$ and $j \neq n_1$ and the proof in the case where exactly one of these conditions is satisfied is analogous. 

Now, since $p$ is a surjection, we know that $ i \neq 1$ implies that $s(\alpha^{(2)}_{i-1}) = x^{(2)}_i$ and $t(\alpha^{(2)}_{i-1}) = x^{(2)}_{i-1}.$ Similarly, $j \neq n_1$ implies that $s(\alpha^{(1)}_j) = x_j^{(1)}$ and $t(\alpha^{(1)}_{j}) = x^{(1)}_{j+1}.$ Observe that we have the exact sequence $$0 \to M(x_1^{(2)} \stackrel{}{\leftrightarrow} \cdots \stackrel{}{\leftrightarrow} x^{(2)}_{i-1})\oplus M(x_{j+1}^{(1)} \stackrel{}{\leftrightarrow} \cdots \stackrel{}{\leftrightarrow} x^{(1)}_{n_1}) \to M(w_2 \stackrel{\alpha}{\longleftarrow} w_1) \to M(u) \to 0.$$ From these facts, we deduce that we have the following two exact sequences $$0 \to M(x_1^{(2)} \stackrel{}{\leftrightarrow} \cdots \stackrel{}{\leftrightarrow} x^{(2)}_{i-1}) \to M(w_2) \to M(x^{(2)}_i \leftrightarrow \cdots \leftrightarrow x_{n_2}^{(2)}) \to 0$$ $$0 \to M(x_{j+1}^{(1)} \stackrel{}{\leftrightarrow} \cdots \stackrel{}{\leftrightarrow} x^{(1)}_{n_1}) \to M(w_1) \to M(x^{(1)}_1 \leftrightarrow \cdots \leftrightarrow x_{j}^{(1)}) \to 0.$$ By Lemma~\ref{quotclosed}, we have that $M(x^{(2)}_i \leftrightarrow \cdots \leftrightarrow x_{n_2}^{(2)}), M(x^{(1)}_1 \leftrightarrow \cdots \leftrightarrow x_{j}^{(1)}) \in \pi_\downarrow(\mathcal{B}).$ Now notice that we have the exact sequence $$0 \to M(x^{(2)}_i \leftrightarrow \cdots \leftrightarrow x_{n_2}^{(2)}) \to M(u) \to M(x^{(1)}_1 \leftrightarrow \cdots \leftrightarrow x_{j}^{(1)}) \to 0.$$ Since $\dim_\Bbbk(M(u)) < \dim_\Bbbk(M(w_2 \stackrel{\alpha}{\longleftarrow} w_1))$ and since $M(w_2 \stackrel{\alpha}{\longleftarrow} w_1)$ was a counterexample of minimal dimension, we have that $M(u) \in \mathcal{B},$ a contradiction.

We conclude that $M(w_2 \stackrel{\alpha}{\longleftarrow} w_1)\in \pi_\downarrow(\mathcal{B}).$ Thus $\pi_\downarrow(\mathcal{B})$ is weakly extension closed.
\end{proof}

\begin{lemma}\label{extclosed}
If $\mathcal{B} \in \mathcal{BIC}(Q),$ then $\pi_\downarrow(\mathcal{B})$ is extension closed.
\end{lemma}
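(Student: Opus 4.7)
The plan is to reduce to the case of extensions of one indecomposable by another, where the classification of such extensions (Lemma~\ref{extq_n} for cyclic quivers, and the analogous classification from~\cite{cs14} in type $\mathbb{A}$) combines with Lemma~\ref{quotclosed} (quotient closure of $\pi_{\downarrow}(\mathcal{B})$) and Lemma~\ref{weakext} (weak extension closure) to yield the conclusion.

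For the base case, consider an extension $0\to M(w_2)\to Y\to M(w_1)\to 0$ with $M(w_1),M(w_2)\in\pi_{\downarrow}(\mathcal{B})$ indecomposable. If the extension splits, $Y\cong M(w_2)\oplus M(w_1)\in\pi_{\downarrow}(\mathcal{B})$ trivially. Otherwise Lemma~\ref{extq_n} leaves two possibilities: either $Y$ is indecomposable, in which case Lemma~\ref{weakext} applies directly, or $Y\cong M(u\leftarrow w\leftarrow v)\oplus M(w)$ in the overlap case, where $w_2=u\leftarrow w$ and $w_1=w\leftarrow v$. In the overlap case, $M(w)$ is a quotient of $M(w_2)$ and hence lies in $\pi_{\downarrow}(\mathcal{B})$ by Lemma~\ref{quotclosed}, while $M(u\leftarrow w\leftarrow v)$ is indecomposable and fits into the short exact sequence $0\to M(w_2)\to M(u\leftarrow w\leftarrow v)\to M(v)\to 0$; since $M(v)$ is a quotient of $M(w_1)$, hence in $\pi_{\downarrow}(\mathcal{B})$ by Lemma~\ref{quotclosed}, a second application of Lemma~\ref{weakext} gives $M(u\leftarrow w\leftarrow v)\in\pi_{\downarrow}(\mathcal{B})$.

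For the reduction, first handle the case where the left term $X$ is indecomposable and $Z=\bigoplus_{j=1}^{n}Z_j\in\pi_{\downarrow}(\mathcal{B})$ is arbitrary, with each $Z_j$ indecomposable. Writing $\pi:Y\twoheadrightarrow Z$, set $E_j:=\pi^{-1}(Z_j)\subseteq Y$, so that $0\to X\to E_j\to Z_j\to 0$ is exact; the base case yields $E_j\in\pi_{\downarrow}(\mathcal{B})$. The map $\bigoplus_{j}E_j\to Y$ obtained by summing the inclusions $E_j\hookrightarrow Y$ is surjective with kernel $\{(x_1,\ldots,x_n)\in X^n:\sum_j x_j=0\}$, exhibiting $Y$ as a quotient of $\bigoplus_j E_j\in\pi_{\downarrow}(\mathcal{B})$, and Lemma~\ref{quotclosed} then places $Y$ in $\pi_{\downarrow}(\mathcal{B})$. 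For general $X\in\pi_{\downarrow}(\mathcal{B})$ we induct on the number of indecomposable summands $\ell(X)$: writing $X=X_1\oplus X'$ with $X_1$ indecomposable and pushing out the extension along the projection $X\twoheadrightarrow X'$ produces $0\to X'\to Y/X_1\to Z\to 0$ with $\ell(X')=\ell(X)-1$, so by induction $Y/X_1\in\pi_{\downarrow}(\mathcal{B})$; applying the indecomposable-$X$ case to $0\to X_1\to Y\to Y/X_1\to 0$ then gives $Y\in\pi_{\downarrow}(\mathcal{B})$.

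The main obstacle is the base case: when Lemma~\ref{extq_n}(ii) produces a decomposable middle term, Lemma~\ref{weakext} does not apply directly, and one must recognize that one summand $M(w)$ is a quotient of $M(w_2)$ while the other summand $M(u\leftarrow w\leftarrow v)$ can itself be presented as an indecomposable extension whose right term $M(v)$ is a quotient of $M(w_1)$. Once this is settled, the Baer-sum presentation of $Y$ as a quotient of $\bigoplus_j E_j$ and the push-out induction on $\ell(X)$ are fairly routine consequences of the additivity of $\Ext^1_{\Lambda}(Z,-)$ in the first argument and the quotient closure of $\pi_{\downarrow}(\mathcal{B})$.
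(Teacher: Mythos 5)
Your overall strategy is the paper's: reduce to extensions between indecomposables and then combine the classification of the unique nonsplit extension (Lemma~\ref{dimext} plus Lemma~\ref{extq_n}, resp.\ the type $\mathbb{A}$ classification of \cite{cs14}) with Lemma~\ref{quotclosed} and Lemma~\ref{weakext}. Your reduction step is in fact more carefully justified than the paper's one-line appeal to $\Ext^1_\Lambda(\oplus X_i,\oplus Y_j)\cong\bigoplus\Ext^1_\Lambda(X_i,Y_j)$: presenting $Y$ as a quotient of $\bigoplus_j\pi^{-1}(Z_j)$ and then inducting on the number of summands of the left term via the pushout sequence $0\to X'\to Y/X_1\to Z\to 0$ is correct, and it makes explicit that this reduction also uses the quotient-closedness of $\pi_\downarrow(\mathcal{B})$, which the paper leaves implicit.

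The gap is in the base case for type $\mathbb{A}$. Your treatment of a decomposable middle term follows only the cyclic classification of Lemma~\ref{extq_n}$(ii)$, where the middle is $M(u\leftarrow w\leftarrow v)\oplus M(w)$ and one summand, $M(w)$, genuinely is a quotient of $M(w_2)$. In type $\mathbb{A}$ the classification of \cite{cs14} used in the paper gives middle term $M(w_3)\oplus M(w_4)$ with $w_3=\text{Pred}(\alpha)\xrightarrow{\alpha}w\xrightarrow{\delta}\text{Succ}(\delta)$ and $w_4=\text{Pred}(\gamma)\xleftarrow{\gamma}w\xleftarrow{\beta}\text{Succ}(\beta)$, where $w_2=\text{Pred}(\gamma)\xleftarrow{\gamma}w\xrightarrow{\delta}\text{Succ}(\delta)$ and $w_1=\text{Pred}(\alpha)\xrightarrow{\alpha}w\xleftarrow{\beta}\text{Succ}(\beta)$. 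Here neither summand is a quotient of $M(w_2)$ or of $M(w_1)$; both are hybrids mixing pieces of the two end terms, so your overlap argument does not apply as written. It can be repaired with your own two-step device applied to each summand separately: $M(w\xrightarrow{\delta}\text{Succ}(\delta))$ and $M(\text{Pred}(\gamma)\xleftarrow{\gamma}w)$ are quotients of $M(w_2)$, while $M(\text{Pred}(\alpha))$ and $M(\text{Succ}(\beta))$ are quotients of $M(w_1)$ (Lemma~\ref{quotclosed}), and there are short exact sequences $0\to M(w\xrightarrow{\delta}\text{Succ}(\delta))\to M(w_3)\to M(\text{Pred}(\alpha))\to 0$ and $0\to M(\text{Pred}(\gamma)\xleftarrow{\gamma}w)\to M(w_4)\to M(\text{Succ}(\beta))\to 0$, so Lemma~\ref{weakext} places both summands in $\pi_\downarrow(\mathcal{B})$; this is exactly how the paper completes the type $\mathbb{A}$ case. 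Until that case is written out, your proof covers only the cyclic quivers.
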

\begin{proof}
{Consider the subcategory $\mathcal{F}ilt(\pi_\downarrow(\mathcal{B})) \subset \Lambda$-mod as defined in Section~\ref{Sec_42}. Clearly, $\pi_\downarrow(\mathcal{B}) \subset \mathcal{F}ilt(\pi_\downarrow(\mathcal{B}))$. By Lemma~\ref{quotclosed}, we know that $\pi_\downarrow(\mathcal{B})$ is quotient closed. Thus, as is observed in \cite[Proposition 3.3]{dij}, it is known that $\mathcal{F}ilt(\pi_\downarrow(\mathcal{B}))$ is a torsion class and, therefore, is extension closed. We show that $\pi_\downarrow(\mathcal{B}) = \mathcal{F}ilt(\pi_\downarrow(\mathcal{B})).$}

{To complete the proof, we show that $\mathcal{F}ilt(\pi_\downarrow(\mathcal{B})) \subset \pi_\downarrow(\mathcal{B})$. We show that any indecomposable object of $\mathcal{F}ilt(\pi_\downarrow(\mathcal{B}))$ belongs to $\pi_\downarrow(\mathcal{B})$. Let $M(w) \in \text{ind}(\mathcal{F}ilt(\pi_\downarrow(\mathcal{B})))$ and let $0 = M_0 \subseteq M_1 \subset \cdots \subset M_k = M(w)$ be a filtration witnessing that $M(w) \in \mathcal{F}ilt(\pi_\downarrow(\mathcal{B}))$. Furthermore, we assume that this filtration is long enough that $M_i/M_{i+1} \in \text{ind}(\pi_\downarrow(\mathcal{B}))$ for all $i \in \{1,\ldots, k\}$. We show by induction that $M_i \in \pi_\downarrow(\mathcal{B})$ for each $i \in \{1,\ldots, k\}$. This is obvious for $M_1$ so we assume that $M_i \in \pi_\downarrow(\mathcal{B})$ and prove that $M_{i+1} \in \pi_\downarrow(\mathcal{B})$.}

{Since $M_i, M_{i+1}/M_i \in \pi_\downarrow(\mathcal{B})$, write $M_i = \oplus_{r=1}^\ell M(u^{(r)})$ and $M_{i+1}/M_i = \oplus_{t=1}^m M(v^{(t)})$ where $M(u^{(r)}), M(v^{(t)}) \in \text{ind}(\pi_\downarrow(\mathcal{B}))$ for all $r \in \{1, \ldots, \ell\}$ and all $t \in \{1,\ldots, m\}$. Moreover, since $M_i$ is submodule of $M(w)$, we have that $\text{supp}(M(u^{(r)}))\cap \text{supp}(M(u^{(r^\prime)})) = \emptyset$ for any distinct $r, r^\prime \in \{1,\ldots, \ell\}$. Similarly, $\text{supp}(M(u^{(t)}))\cap \text{supp}(M(u^{(t^\prime)})) = \emptyset$ for any distinct $t, t^\prime \in \{1,\ldots, m\}$. Since $0 \to \oplus_{r=1}^\ell M(u^{(r)}) \to M_{i+1} \to \oplus_{t=1}^m M(v^{(t)})\to 0$ is an extension and $M_{i+1}$ is a submodule of $M(w)$, we see that $M_{i+1}$ is a direct sum of submodules $M(w^\prime) \subset M(w)$ with pairwise disjoint supports. Furthermore, each string module $M(w^\prime)$ belongs to $\overline{\{M({u^{(r)}}), M({v^{(t)}})\}}_{1\le r \le \ell, 1\le t \le m}$. Since $\pi_\downarrow(\mathcal{B})$ is weakly extension closed by Lemma~\ref{weakext}, each summand of $M_{i+1}$ belongs to $\pi_\downarrow(\mathcal{B})$. We obtain that $M_{i+1} \in \pi_\downarrow(\mathcal{B})$. Thus, $\pi_\downarrow(\mathcal{B}) = \mathcal{F}ilt(\pi_\downarrow(\mathcal{B}))$.} 
\end{proof}

\begin{lemma}\label{pidownordpres}
The map $\pi_\downarrow: \mathcal{BIC}(Q) \to \text{tors}(\Lambda)$ is order-preserving.
\end{lemma}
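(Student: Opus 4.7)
The proof will be essentially a direct unwinding of the definition of $\pi_\downarrow$, so I expect no genuine difficulty.

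The plan is as follows. Suppose $\mathcal{B}_1, \mathcal{B}_2 \in \mathcal{BIC}(Q)$ with $\mathcal{B}_1 \subseteq \mathcal{B}_2$. Since both $\pi_\downarrow(\mathcal{B}_1)$ and $\pi_\downarrow(\mathcal{B}_2)$ are additively generated subcategories of $\Lambda$-mod, it suffices to show the containment on the level of indecomposable objects, i.e.\ $\text{ind}(\mathbb{X}(\mathcal{B}_1)) \subseteq \text{ind}(\mathbb{X}(\mathcal{B}_2))$.

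Let $M \in \text{ind}(\mathbb{X}(\mathcal{B}_1))$. By definition of $\mathbb{X}(\mathcal{B}_1)$, we have $M \in \mathcal{B}_1$ and, moreover, every surjection $M \twoheadrightarrow Y$ in $\Lambda$-mod satisfies $Y \in \mathcal{B}_1$. Using $\mathcal{B}_1 \subseteq \mathcal{B}_2$, we immediately obtain $M \in \mathcal{B}_2$, and for any surjection $M \twoheadrightarrow Y$ the module $Y$ lies in $\mathcal{B}_1 \subseteq \mathcal{B}_2$. Hence $M$ satisfies the defining property of $\mathbb{X}(\mathcal{B}_2)$, so $M \in \text{ind}(\mathbb{X}(\mathcal{B}_2))$.

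This proves $\pi_\downarrow(\mathcal{B}_1) \subseteq \pi_\downarrow(\mathcal{B}_2)$, establishing that $\pi_\downarrow$ is order-preserving. The image lies in $\text{tors}(\Lambda)$ by Proposition~\ref{pidownintors}, so the map is indeed $\mathcal{BIC}(Q) \to \text{tors}(\Lambda)$ as stated. No obstacle arises: the argument is a single application of the definition of $\mathbb{X}(-)$.
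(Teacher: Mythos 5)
Your proof is correct and is essentially the same argument as the paper's: both simply unwind the definition of $\mathbb{X}(-)$, noting that if every quotient of $M$ lies in $\mathcal{B}_1 \subseteq \mathcal{B}_2$ then $M \in \mathbb{X}(\mathcal{B}_2)$, hence $\pi_\downarrow(\mathcal{B}_1) \subseteq \pi_\downarrow(\mathcal{B}_2)$. No issues.
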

\begin{proof}
Let $\mathcal{B}, \mathcal{B}^\prime \in \mathcal{BIC}(Q)$ where $\mathcal{B} \subset \mathcal{B}^\prime.$ Let $X \in \text{ind}(\mathbb{X}(\mathcal{B}))$ and let $X \twoheadrightarrow Y$ be a surjection. Then $Y \in \mathcal{B} \subset \mathcal{B}^\prime$ so $X \in \text{ind}(\mathbb{X}(\mathcal{B}^\prime)).$ Thus $\pi_\downarrow(\mathcal{B}) \subset \pi_\downarrow(\mathcal{B}^\prime)$ so $\pi_\downarrow: \mathcal{BIC}(Q) \to \text{tors}(\Lambda)$ is order-preserving.
\end{proof}

\begin{lemma}\label{duality}
The maps $\pi_\downarrow$ and $\pi^\uparrow$ satisfy $D\pi^\uparrow(\mathcal{B})^c = \pi_\downarrow(D\mathcal{B}^c)$ for any $\mathcal{B} \in \mathcal{BIC}(Q).$
\end{lemma}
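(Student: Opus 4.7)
The plan is to verify the identity by checking which indecomposable $\Lambda^{\op}$-modules lie on each side and showing the two sets coincide. Both $D\pi^\uparrow(\mathcal{B})^c$ and $\pi_\downarrow(D\mathcal{B}^c)$ are additively generated subcategories of $\Lambda^{\op}\text{-mod}$, so it suffices to verify membership indecomposable-by-indecomposable. I would parametrize the indecomposables of $\Lambda^{\op}\text{-mod}$ as $DM$ for $M \in \text{ind}(\Lambda\text{-mod})$ and chase the definitions through the contravariant equivalence $D = \Hom_\Lambda(-,\Bbbk)$, which swaps submodules with quotients and reverses the direction of morphisms.

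First I would unpack the left-hand side. Using the identity $D(\mathcal{A}^c) = (D\mathcal{A})^c$ proved earlier in this section, $DM \in D\pi^\uparrow(\mathcal{B})^c$ is equivalent to $M \notin \pi^\uparrow(\mathcal{B})$; by the definition of $\pi^\uparrow$ this says that no indecomposable $Y \in \text{ind}(\mathcal{B})$ admits an embedding $Y \hookrightarrow M$. Since $\mathcal{B}$ is additively generated by its indecomposables and the image of any embedding from $\mathcal{B}$ into $M$ is a submodule of $M$ isomorphic to an object of $\mathcal{B}$, this is equivalent to the condition that no submodule $N$ of $M$ belongs to $\mathcal{B}$.

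Next I would unpack the right-hand side. By the same identity, $\pi_\downarrow(D\mathcal{B}^c) = \pi_\downarrow((D\mathcal{B})^c)$, and $DM$ lies in this subcategory precisely when every quotient of $DM$ in $\Lambda^{\op}\text{-mod}$ (including $DM$ itself) lies in $(D\mathcal{B})^c$. The contravariance of $D$ provides a bijection between the quotients of $DM$ and the submodules of $M$ via $Z = DN \leftrightarrow N$, under which $DN \in (D\mathcal{B})^c$ is equivalent to $N \notin \mathcal{B}$. Thus the membership condition for the right-hand side also becomes: no submodule $N$ of $M$ belongs to $\mathcal{B}$. This matches the condition obtained for the left-hand side, and the equality of the two additively generated subcategories follows.

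The argument is essentially a bookkeeping exercise once the duality dictionary is in place; no serious obstacle arises beyond carefully tracking the contravariance of $D$ and its interaction with the complementation functor, both of which are supplied by the earlier lemma $D(\mathcal{A}^c) = (D\mathcal{A})^c$.
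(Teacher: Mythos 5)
Your argument is correct and is essentially the paper's own proof: both simply unwind the definitions of $\pi^{\uparrow}$ and $\pi_{\downarrow}$, use that the standard duality $D$ exchanges submodules with quotients, and invoke the compatibility $D(\mathcal{A}^c)=(D\mathcal{A})^c$. The only (harmless) imprecision is the sentence equating $DN\in(D\mathcal{B})^c$ with $N\notin\mathcal{B}$ rather than with $N\in\mathcal{B}^c$; since every indecomposable summand of a submodule of $M$ is itself a submodule of $M$, the two quantified conditions ("every submodule lies in $\mathcal{B}^c$" and "no nonzero submodule is an object of $\mathcal{B}$") coincide, so your conclusion stands.
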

\begin{proof}
We have that $$\begin{array}{rcl}
D\pi^\uparrow(\mathcal{B})^c & = &  D(\text{add}(\bigoplus X: X \in \text{ind}(\Lambda\text{-mod}) \text{ where } \exists Y \in \mathcal{B} \text{ such that } Y \hookrightarrow X))^c\\
& = & D\left(\text{add}(\bigoplus X: X \in \text{ind}{(\mathcal{B}^c)} \text{ where }  Y\hookrightarrow X \implies Y \in \mathcal{B}^c)\right) \\
& = & D\left(\text{add}(\bigoplus DX: DX \in \text{ind}(D\mathcal{B}^c) \text{ where } DX \twoheadrightarrow DY \implies DY \in D\mathcal{B}^c)\right)\\
& = & \pi_\downarrow(D\mathcal{B}^c).
\end{array}$$
We remark that in the penultimate line of the calculation $D\mathcal{B}^c \in \mathcal{BIC}(Q^\text{op})$.
\end{proof}

\begin{lemma}\label{piupprops}
The map $\pi^\uparrow: \mathcal{BIC}(Q) \to \mathcal{ADD}(Q)$ satisfies the following:

$\begin{array}{rl}
a) & \pi^\uparrow(\mathcal{B}) \in \mathcal{BIC}(Q),\\
b) & \pi^\uparrow \text{ is order-preserving.}
\end{array}$
\end{lemma}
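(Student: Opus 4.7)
My plan is to prove both parts by reducing each assertion about $\pi^\uparrow$ to the corresponding statement already established for $\pi_\downarrow$, using the duality identity from Lemma~\ref{duality}. Rewriting that identity in the form
\[
\pi^\uparrow(\mathcal{B}) \;=\; \bigl(D\,\pi_\downarrow(D\mathcal{B}^c)\bigr)^c,
\]
expresses $\pi^\uparrow$ as the conjugate of $\pi_\downarrow$ by the two involutions $(-)^c$ and $D(-)$. Since the behavior of all three of these operations on $\mathcal{BIC}(Q)$ and on inclusions is already known, both claims become formal chases.

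For part $a)$, I would start with $\mathcal{B}\in\mathcal{BIC}(Q)$, apply complementation (which restricts to a duality on $\mathcal{BIC}(Q)$, as noted in the text), and then apply the standard duality $D$, which is an exact equivalence and therefore carries $\mathcal{BIC}(Q)$ to $\mathcal{BIC}(Q^{\mathrm{op}})$. At this point I apply $\pi_\downarrow$ in the setting of $\Lambda^{\mathrm{op}}$: by Proposition~\ref{pidownintors} the result lies in $\mathrm{tors}(\Lambda^{\mathrm{op}})$, and by Lemma~\ref{torsinclude} this is contained in $\mathcal{BIC}(Q^{\mathrm{op}})$. A second application of $D$ returns us to $\mathcal{BIC}(Q)$, and a final complementation remains in $\mathcal{BIC}(Q)$, so $\pi^\uparrow(\mathcal{B})\in\mathcal{BIC}(Q)$ as required.

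For part $b)$, I would chase an inclusion $\mathcal{B}_1\subset\mathcal{B}_2$ through the same composition. Complementation reverses inclusions, $D(-)$ preserves them, Lemma~\ref{pidownordpres} says $\pi_\downarrow$ preserves them, $D(-)$ preserves them again, and the outer complementation reverses them one last time. The two reversals cancel and the three preservations combine, yielding $\pi^\uparrow(\mathcal{B}_1)\subset\pi^\uparrow(\mathcal{B}_2)$.

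The main thing to check carefully is simply that Proposition~\ref{pidownintors} and the surrounding theory apply equally to $\Lambda^{\mathrm{op}}$. This holds because the class of quivers under consideration is closed under reversing arrows: a type $\mathbb{A}$ quiver $Q$ has $Q^{\mathrm{op}}$ again of type $\mathbb{A}$, and if $Q=Q(n)$ then $Q^{\mathrm{op}}$ is again an oriented $n$-cycle, so $\Lambda^{\mathrm{op}}$ is a cluster-tilted algebra of the same type as $\Lambda$. Once this is noted, the proof is a purely formal consequence of Lemma~\ref{duality} together with the previously established properties of $\pi_\downarrow$, and there is no further obstacle.
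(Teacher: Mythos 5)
Your proposal is correct and follows essentially the same route as the paper: both parts are deduced from the identity $\pi^\uparrow(\mathcal{B}) = D(\pi_\downarrow(D\mathcal{B}^c))^c$ of Lemma~\ref{duality}, with part $a)$ obtained by chasing $\mathcal{B}$ through $(-)^c$, $D(-)$, Proposition~\ref{pidownintors} together with Lemma~\ref{torsinclude} applied over $\Lambda^{\mathrm{op}}$, and back, and part $b)$ by the same chase applied to an inclusion using Lemma~\ref{pidownordpres}. Your explicit remark that type $\mathbb{A}$ quivers and oriented cycles are closed under arrow reversal, so that the $\pi_\downarrow$ results indeed apply to $\Lambda^{\mathrm{op}}$, is a point the paper leaves implicit and is a welcome addition.
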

\begin{proof}
To prove both $a)$ and $b)$, we use that $\pi^\uparrow(\mathcal{B}) =  D(\pi_\downarrow(D\mathcal{B}^c))^c$ for any $\mathcal{B} \in \mathcal{BIC}(Q)$, which follows from Lemma~\ref{duality}. Let $\mathcal{B} \in \mathcal{BIC}(Q)$, then we have that $D\mathcal{B}^c \in \mathcal{BIC}(Q^\text{op}).$ By Proposition~\ref{pidownintors} and Lemma~\ref{torsinclude}, we have that $\pi_\downarrow(D\mathcal{B}^c) \in \mathcal{BIC}(Q^\text{op}).$ Now it follows that $\pi^\uparrow(\mathcal{B}) = D(\pi_\downarrow(D\mathcal{B}^c))^c \in \mathcal{BIC}(Q).$

To prove $b)$, let $\mathcal{B}_1, \mathcal{B}_2 \in \mathcal{BIC}(Q).$ Then we have $$\begin{array}{rcll}
\mathcal{B}_1 \subset \mathcal{B}_2 & \implies & \mathcal{B}_1^c \supset \mathcal{B}_2^c\\
& \implies & D\mathcal{B}_1^c \subset D\mathcal{B}_2^c \\
& \implies & \pi_\downarrow(D\mathcal{B}_1^c) \subset \pi_\downarrow(D\mathcal{B}_2^c) & \text{(by Lemma~\ref{pidownordpres})}\\
& \implies & (\pi_\downarrow(D\mathcal{B}_1^c))^c \supset (\pi_\downarrow(D\mathcal{B}_2^c))^c\\
& \implies & D(\pi_\downarrow(D\mathcal{B}_1^c))^c \subset D(\pi_\downarrow(D\mathcal{B}_2^c))^c\\
& \implies & \pi^\uparrow(\mathcal{B}_1) \subset \pi^\uparrow(\mathcal{B}_2).
\end{array}$$
Thus $\pi^\uparrow$ is order-preserving. \end{proof}

\begin{lemma}\label{piuppidown}
The maps $\pi_\downarrow$ and $\pi^\uparrow$ satisfy the following: 

$\begin{array}{rl}
a) & \pi_\downarrow(\mathcal{B}) = (\pi_\downarrow\circ \pi^\uparrow)(\mathcal{B}) \text{ for any $\mathcal{B} \in \mathcal{BIC}(Q)$},\\
b) & \pi^\uparrow(\mathcal{B}) = (\pi^\uparrow\circ\pi_\downarrow)(\mathcal{B}) \text{ for any $\mathcal{B} \in \mathcal{BIC}(Q)$}.
\end{array}$
\end{lemma}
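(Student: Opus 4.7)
\emph{Strategy for part $(a)$.} The inclusion $\pi_\downarrow(\mathcal{B}) \subset \pi_\downarrow(\pi^\uparrow(\mathcal{B}))$ is immediate from $\mathcal{B} \subset \pi^\uparrow(\mathcal{B})$ together with Lemma~\ref{pidownordpres}. For the reverse inclusion, $\pi_\downarrow(\pi^\uparrow(\mathcal{B}))$ is a torsion class by Proposition~\ref{pidownintors}, and $\pi_\downarrow(\mathcal{B})$ is the largest torsion class contained in $\mathcal{B}$: any torsion class $\mathcal{T} \subset \mathcal{B}$ is quotient closed, so each indecomposable of $\mathcal{T}$ has all its quotients in $\mathcal{T} \subset \mathcal{B}$ and therefore lies in $\mathbb{X}(\mathcal{B})$. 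It therefore suffices to prove the key containment $\pi_\downarrow(\pi^\uparrow(\mathcal{B})) \subset \mathcal{B}$; unpacking definitions, I need to show that any indecomposable $X$ having $X$ and every indecomposable quotient of $X$ in $\pi^\uparrow(\mathcal{B})$ must itself lie in $\mathcal{B}$.

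\emph{The main induction.} The plan is to proceed by induction on $\dim_\Bbbk X$. If $X$ is simple, the nonzero indecomposable submodule of $X$ in $\mathcal{B}$ guaranteed by $X \in \pi^\uparrow(\mathcal{B})$ must equal $X$. For the inductive step, any proper indecomposable quotient $Y$ of $X$ satisfies the same hypothesis with strictly smaller dimension (indecomposable quotients of $Y$ are indecomposable quotients of $X$), so $Y \in \mathcal{B}$ by induction; hence every proper indecomposable quotient of $X$ lies in $\mathcal{B}$. Writing $X = M(w)$ with $w = x_1 \leftrightarrow \cdots \leftrightarrow x_{m+1}$, pick an indecomposable submodule $Z_0 = M(x_i \leftrightarrow \cdots \leftrightarrow x_j) \in \mathcal{B}$; the submodule condition forces $\alpha_{i-1}\colon x_{i-1} \to x_i$ when $i > 1$ and $\alpha_j\colon x_{j+1} \to x_j$ when $j < m+1$. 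If $Z_0$ is already a truncation (i.e.\ $i=1$ or $j=m+1$), then $X/Z_0$ is the complementary indecomposable substring, is in $\mathcal{B}$ by induction, and weak extension closure applied to $0 \to Z_0 \to X \to X/Z_0 \to 0$ yields $X \in \mathcal{B}$. Otherwise the boundary arrow conditions make $Z_0' := M(x_1 \leftrightarrow \cdots \leftrightarrow x_j)$ an indecomposable submodule of $X$ with $Z_0'/Z_0 \cong M(x_1 \leftrightarrow \cdots \leftrightarrow x_{i-1})$; the latter coincides with $X/K$ for the indecomposable submodule $K := M(x_i \leftrightarrow \cdots \leftrightarrow x_{m+1})$ of $X$, so it is a proper indecomposable quotient of $X$ and hence lies in $\mathcal{B}$ by induction. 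Weak extension closure on $0 \to Z_0 \to Z_0' \to Z_0'/Z_0 \to 0$ promotes $Z_0$ to $Z_0' \in \mathcal{B}$, and a final application to $0 \to Z_0' \to X \to M(x_{j+1} \leftrightarrow \cdots \leftrightarrow x_{m+1}) \to 0$ (whose cokernel is again a proper indecomposable quotient of $X$, hence in $\mathcal{B}$) delivers $X \in \mathcal{B}$. For cyclic quivers indecomposable submodules are prefixes of $w$, so only the truncation case occurs.

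\emph{Deducing part $(b)$.} Part $(b)$ follows formally from part $(a)$ by duality. Lemma~\ref{duality} gives $D\pi^\uparrow_Q(\mathcal{B})^c = \pi_\downarrow^{Q^{\op}}(D\mathcal{B}^c)$, and the same lemma applied to $Q^{\op}$ (which is again of type $\mathbb{A}$ or a cyclic quiver) yields the dual intertwining $D\pi^\uparrow_{Q^{\op}}(\mathcal{C})^c = \pi_\downarrow^Q(D\mathcal{C}^c)$. Since part $(a)$ applies to $Q^{\op}$, the identity $\pi_\downarrow^{Q^{\op}}(\pi^\uparrow_{Q^{\op}}(\mathcal{C})) = \pi_\downarrow^{Q^{\op}}(\mathcal{C})$ holds for every $\mathcal{C} \in \mathcal{BIC}(Q^{\op})$; specialising to $\mathcal{C} = D\mathcal{B}^c$ and rewriting using the two intertwiners translates this into $\pi^\uparrow_Q(\pi_\downarrow^Q(\mathcal{B})) = \pi^\uparrow_Q(\mathcal{B})$, which is one direction of the biconditional. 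The reverse direction is entirely symmetric. The main obstacle in the whole argument is the combinatorial construction in the middle paragraph: exhibiting $Z_0'$ from an arbitrary indecomposable submodule $Z_0$ and identifying $Z_0'/Z_0$ as a quotient of $X$ both rely on the explicit description of submodules and quotients of string modules over the cluster-tilted algebra $\Lambda$.
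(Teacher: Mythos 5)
Your proof is correct and takes essentially the same route as the paper: the nontrivial inclusion in part $(a)$ rests on the same string-module surgery (pick an indecomposable submodule $Z_0\in\mathcal{B}$ of $X$, observe that the complementary substrings are proper indecomposable quotients of $X$ lying in $\mathcal{B}$, and apply weak extension closedness twice), with your induction on $\dim_\Bbbk X$ together with the ``largest torsion class contained in $\mathcal{B}$'' reduction playing the role of the paper's minimal-counterexample argument among quotients of a fixed $M(u)\in\mathbb{X}(\pi^\uparrow(\mathcal{B}))$. For part $(b)$ you apply $(a)$ to $Q^{\op}$ and transport through Lemma~\ref{duality} to get $\pi^\uparrow\circ\pi_\downarrow=\pi^\uparrow$ outright, which makes the biconditional immediate; the paper runs the very same duality computation, only conditionally, so the content coincides.
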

\begin{proof}
We first prove $a)$. Since $\mathcal{B}\subset \pi^\uparrow(\mathcal{B})$, by Lemma~\ref{pidownordpres} we know that   $\pi_\downarrow(\mathcal{B}) \subset \pi_\downarrow(\pi^\uparrow(\mathcal{B}))$. Thus we need to show that $\pi_\downarrow(\pi^\uparrow(\mathcal{B})) \subset \pi_\downarrow(\mathcal{B}).$ To do so, let $M(u) \in \text{ind}(\mathbb{X}(\pi^\uparrow(\mathcal{B})))$ and suppose that $M(u) \twoheadrightarrow M(w)$ is a surjection where $M(w) \not \in \mathcal{B}$ such that any other such indecomposable $M(w^\prime)$ with $\dim(M(w^\prime)) < \dim(M(w))$ belongs to $\mathcal{B}.$ 

Since $M(w) \in \pi^\uparrow(\mathcal{B}),$ there exists $M(w_1) \in \mathcal{B}$ and an inclusion $M(w_1) \hookrightarrow M(w).$ This inclusion gives rise to an exact sequence $$ 0\to M(w_1) \to M(w) \to M(w)/M(w_1) \to 0.$$ Note that $\dim(M(w)/M(w_1)) < \dim(M(w))$ and we have a surjection $M(u) \twoheadrightarrow M(w) \twoheadrightarrow M(w)/M(w_1)$ so by assumption $M(w)/M(w_1) \in \mathcal{B}.$ If $M(w)/M(w_1)$ is indecomposable, then by the fact that $\mathcal{B}$ is biclosed, $M(w) \in \mathcal{B},$ a contradiction. Thus we can assume $M(w)/M(w_1)$ is not indecomposable.

Observe that since $M(w_1)$ is indecomposable and since $w_1$ is a substring of $w$, we have that $M(w)/M(w_1) = M(w_2)\oplus M(w_3)$ for some substrings of $w$, denoted $w_2$ and $w_3.$ Now observe that we obtain an exact sequence $$0 \to M(w_1) \to M(w_1\leftarrow w_2) \to M(w_2) \to 0.$$ Since $M(w)/M(w_1) \in \mathcal{B},$ we know that $M(w_2), M(w_3) \in \text{ind}(\mathcal{B})$. By the fact that $\mathcal{B}$ is biclosed, we have that $M(w_1\leftarrow w_2) \in \text{ind}(\mathcal{B}).$ We now notice that $w = w_3 \rightarrow w_1 \leftarrow w_2$ so $M(w_1\leftarrow w_2) \hookrightarrow M(w)$ and thus we have the exact sequence $$0 \to M(w_1\leftarrow w_2) \to M(w) \to M(w_3) \to 0.$$ Now by the fact that $\mathcal{B}$ is biclosed, we obtain that $M(w) \in \mathcal{B},$ a contradiction. Thus $M(u) \in \text{ind}(\mathbb{X}(\mathcal{B}))$ and so $\pi_\downarrow(\pi^\uparrow(\mathcal{B})) \subset \pi_\downarrow(\mathcal{B}).$

To prove $b)$, observe that

$$\begin{array}{rclll}
D(\pi^\uparrow(\mathcal{B}))^c & = & \pi_\downarrow(D\mathcal{B}^c) & & \text{(by Lemma~\ref{duality})}\\
& = & \pi_\downarrow(\pi^\uparrow(D\mathcal{B}^c)) & & \text{(by $a)$)} \\
& = & \pi_\downarrow \left(D(\pi_\downarrow(\mathcal{B}))^c\right) & & \text{(by Lemma~\ref{duality})} \\
& = & D(\pi^\uparrow(\pi_\downarrow(\mathcal{B})))^c & & \text{(by Lemma~\ref{duality}).}
\end{array}$$
Thus we have that $\pi^\uparrow(\mathcal{B}) = (\pi^\uparrow\circ\pi_\downarrow)(\mathcal{B}).$ 
\end{proof}

\section{Canonical join-representations}\label{canonical}

In this section, we use our previous results to classify canonical join- and canonical meet-representations of torsion classes $\mathcal{T} \in \text{tors}(\Lambda).$ Throughout this section, we assume that $\Lambda = \Bbbk Q/I$ is the cluster-tilted algebra defined by a quiver $Q$, which is either a cyclic quiver or of type $\mathbb{A}$.

\begin{lemma}\label{facmw}
Let $M(w) \in \text{ind}(\Lambda\text{-mod}).$ Then 

\begin{itemize}
\item[a)] there are no extensions of the form $0\to M(w_1) \to M(v) \to M(w_2) \to 0$ where $M(w_i) \in \text{Fac}(M(w))$ for $i =1,2,$ and
\item[b)] $\text{Fac}(M(w)) \in \text{tors}(\Lambda)$.
\end{itemize}
\end{lemma}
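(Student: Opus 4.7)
The plan is to prove part (a) by identifying the indecomposable quotients of $M(w)$ explicitly and then invoking the extension classifications to rule out nonsplit extensions among them; part (b) will then follow quickly from part (a) together with the trivial quotient closure of $\text{Fac}(M(w))$.

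For part (a), I first describe $\text{ind}(\text{Fac}(M(w)))$: by standard string-module theory, every indecomposable quotient of $M(w)$ is a string module $M(w^\prime)$ where $w^\prime$ is a substring of $w$ such that each boundary arrow of $w^\prime$ inside $w$ (the arrow of $w$ incident to an endpoint of $w^\prime$ but not contained in $w^\prime$ itself) points outward, away from $w^\prime$. Cutting $w$ at a vertex yields a well-defined quotient string module precisely when the arrow being cut off points away from the retained substring, so I will record this characterization at the start.

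Suppose then that $0 \to M(w_1) \to M(v) \to M(w_2) \to 0$ is a nonsplit extension with $M(w_1), M(w_2) \in \text{ind}(\text{Fac}(M(w)))$. By Lemma~\ref{dimext} this nonsplit extension is unique up to equivalence, so I may invoke the explicit classifications: Lemma~\ref{extq_n} in the cyclic case, and the classification from \cite{cs14} used in the proof of Lemma~\ref{extclosed} in the type $\mathbb{A}$ case. In the disjoint-support case the nonsplit extension has middle term $M(w_2 \leftarrow w_1)$, which forces the existence of a connecting arrow in $Q$ from the initial vertex of $w_2$ into the terminal vertex of $w_1$; the direction of this arrow at these endpoints is incompatible with the outward-pointing boundary condition forced by realizing $w_1$ and $w_2$ as quotient substrings of the single string $w$. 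In the overlapping case, where $w_2 = u \leftarrow w_0$ and $w_1 = w_0 \leftarrow v$ share support $\{x_i\}_{i \in [k]}$, the pattern would have to embed inside $w$ as $u \leftarrow w_0 \leftarrow v$; but then the extension pattern forces the boundary arrows of $w_1$ and $w_2$ at the endpoints of the overlap $w_0$ to both point into $w_0$, contradicting the outward-pointing condition needed for $w_1$ and $w_2$ to each be quotient substrings of $w$.

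For part (b), $\text{Fac}(M(w))$ is closed under direct summands and quotients by definition, so it remains to verify extension closure. Given an extension $0 \to X \to Y \to Z \to 0$ with $X, Z \in \text{Fac}(M(w))$, I decompose $X = \bigoplus_i X_i$ and $Z = \bigoplus_j Z_j$ into indecomposables (each in $\text{Fac}(M(w))$ by summand-closure), and use additivity of $\text{Ext}^1$ over direct sums together with part (a) to conclude that every $\text{Ext}^1(Z_j, X_i) = 0$. Thus the entire extension class vanishes, forcing $Y \cong X \oplus Z \in \text{Fac}(M(w))$. The main obstacle is the case analysis in part (a): the proof must carefully track both the embedding of $w_1, w_2$ inside $w$ and the arrow directions forced by the extension classifications, but the dichotomy (disjoint support versus nontrivial overlap) is clean and in each case the outward-boundary condition for quotient substrings directly contradicts the arrow configuration required to produce a nonsplit extension.
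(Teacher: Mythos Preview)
Your overall strategy is sound, but two points deserve attention.

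First, part~(a) as stated only concerns extensions with \emph{indecomposable} middle term $M(v)$, so the overlapping-support case of the extension classification (which produces a decomposable middle term) is irrelevant to~(a) itself. The paper's proof of~(a) accordingly treats only the concatenation case: from $v = w_1 \stackrel{\alpha}{\leftarrow} w_2$ one observes that both $w_i$ sit inside $w$, hence so does $\alpha$, and the orientation of $\alpha$ then contradicts the outward-boundary condition for $M(w_1)$ to be a quotient. Your disjoint-case argument is the same, though you should make explicit why $\alpha$ must lie on $w$ (in type~$\mathbb{A}$ this uses the shortest-path property underlying Lemma~\ref{intcomponents}; in the cyclic case it is immediate). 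However, your proof of~(b) invokes $\Ext^1(Z_j,X_i)=0$, which is strictly stronger than~(a): it also requires ruling out nonsplit extensions with decomposable middle term, i.e.\ the overlapping case. So your extra work there is not wasted, but you should be clear that it is needed for~(b), not for~(a).

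Second, your stated reason in the overlapping case (``both boundary arrows point into $w_0$'') is not quite right. In the cyclic classification $w_2 = u \leftarrow w_0$, $w_1 = w_0 \leftarrow v$, the two connecting arrows point in opposite directions relative to $w_0$. The correct contradiction is that once $u \leftarrow w_0 \leftarrow v$ embeds in $w$, the arrow joining $w_0$ to $v$ is a boundary arrow of $w_2$ inside $w$ pointing \emph{into} $w_2$, violating the quotient condition for $w_2$. In type~$\mathbb{A}$ the argument is similar: at each end of $w_0$ only one of the two competing continuations (from $w_1$ or from $w_2$) can lie on $w$, forcing a boundary arrow of the other to point inward. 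The conclusion you want is correct; only the bookkeeping needs tightening.

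Finally, the paper's route to~(b) is genuinely different: rather than establishing $\Ext^1=0$ directly, it shows $\text{Fac}(M(w))\in\mathcal{BIC}(Q)$ (vacuously weakly extension closed by~(a), weakly extension coclosed by quotient-closure), observes that $\pi_\downarrow$ fixes it, and then invokes Proposition~\ref{pidownintors}. Your approach is more direct but leans on the full extension classification; the paper's approach recycles the biclosed machinery already built in Section~\ref{aboutpiupanddown}.
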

\begin{proof}
$a)$ Suppose we have an extension $0\to M(w_1) \to M(v) \to M(w_2) \to 0$ where $M(w_i) \in \text{Fac}(M(w))$ for $i = 1,2.$ Then by exactness one has that $v = w_1 \stackrel{\alpha}{\longleftarrow} w_2.$ Since $M(w_i) \in \text{Fac}(M(w))$ and since $M(w_i)$ is indecomposable, $M(w) \twoheadrightarrow M(w_i).$ Moreover, $v = w_1 \stackrel{\alpha}{\longleftarrow} w_2$ must be a substring of $w$. However, the orientation of $\alpha$ contradicts that $\Hom_\Lambda(M(w),M(w_1)) \neq 0.$

$b)$ We observe that since $\text{Fac}(M(w))$ is quotient closed, one has $\pi_\downarrow(\text{Fac}(M(w))) = \text{Fac}(M(w)).$ Also, $$\text{Fac}(M(w)) = \text{add}(\oplus M(v_i):\ \exists M(w) \twoheadrightarrow M(v_i))$$ so $\text{Fac}(M(w))$ is additively generated. Thus, by Lemma~\ref{torsinclude}, it remains to show that $\text{Fac}(M(w)) \in \mathcal{BIC}(Q).$ By part $a)$, $\text{Fac}(M(w))$ vacuously is weakly extension closed. Since $\text{Fac}(M(w))$ is quotient closed, any extension $0 \to M(w_1) \to M(v) \to M(w_2) \to 0$ with $M(v) \in \text{Fac}(M(w))$ has $M(w_2) \in \text{Fac}(M(w))$. This means there are no extensions of the form $0 \to M(w_1) \to M(w) \to M(w_2) \to 0$ with $M(w_i) \not \in \text{Fac}(M(w))$ for $i = 1,2.$ Thus $\text{Fac}(M(w))$ is weakly extension co-closed. We conclude that $\text{Fac}(M(w)) \in \mathcal{BIC}(Q).$ \end{proof} 

\begin{remark}
An alternative proof of Lemma~\ref{facmw} $b)$ is obtained by using the fact that indecomposable modules over cluster-tilted algebras are $\tau$-rigid and then applying \cite[Theorem 5.10 (b)]{auslander1981almost}.
\end{remark}

\begin{lemma}
A torsion class $\mathcal{T} \in \text{tors}(\Lambda)$ is join-irreducible if and only if $\mathcal{T} = \text{Fac}(M(w))$ for some $M(w) \in \text{ind}(\Lambda\text{-mod}).$
\end{lemma}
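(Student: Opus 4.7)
The plan is to prove the two directions separately.

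For the forward direction, I will assume $\mathcal{T}$ is join-irreducible with unique lower cover $\mathcal{T}_*$ and deduce $\mathcal{T}=\text{Fac}(M(w))$ for some indecomposable $M(w)$. Since $\mathcal{T}\supsetneq\mathcal{T}_*$ and $\mathcal{T}_*$ is additive, decomposing any module in $\mathcal{T}\setminus\mathcal{T}_*$ yields an indecomposable $M(w)\in\mathcal{T}\setminus\mathcal{T}_*$. By quotient-closure and additivity of $\mathcal{T}$ we have $\text{Fac}(M(w))\subseteq\mathcal{T}$. Join-irreducibility means $\mathcal{T}_*$ is the maximum element of $\{\mathcal{T}'\in\text{tors}(\Lambda):\mathcal{T}'\subsetneq\mathcal{T}\}$, so a strict containment $\text{Fac}(M(w))\subsetneq\mathcal{T}$ would force $\text{Fac}(M(w))\subseteq\mathcal{T}_*$, contradicting $M(w)\notin\mathcal{T}_*$. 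Hence $\mathcal{T}=\text{Fac}(M(w))$.

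For the backward direction, set $\mathcal{T}=\text{Fac}(M(w))$, which is a torsion class by Lemma~\ref{facmw}(b). I aim to show the set of proper torsion subclasses has a maximum, which is equivalent to join-irreducibility. No $\mathcal{T}'\subsetneq\mathcal{T}$ contains $M(w)$, because then quotient-closure of $\mathcal{T}'$ would give $\text{Fac}(M(w))\subseteq\mathcal{T}'$, a contradiction; so it suffices to show $M(w)\notin\mathcal{T}_*:=\bigvee_{\mathcal{T}'\subsetneq\mathcal{T}}\mathcal{T}'$. By Lemma~\ref{join} (iterated to arbitrary finite joins, legitimate since $\text{tors}(\Lambda)$ is finite), $\mathcal{T}_*=\mathcal{F}ilt\bigl(\bigcup_{\mathcal{T}'\subsetneq\mathcal{T}}\mathcal{T}'\bigr)$. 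Suppose for contradiction $M(w)$ admits a shortest filtration $0=X_0\subsetneq X_1\subsetneq\cdots\subsetneq X_n=M(w)$ with each factor $X_i/X_{i-1}$ in some $\mathcal{T}^{(i)}\subsetneq\mathcal{T}$. If $n=1$, then $M(w)\in\mathcal{T}^{(1)}$, already a contradiction. If $n\geq 2$, then $X_{n-1}$ is a proper nonzero submodule of $M(w)$, and iterated extension closure of the torsion class $\text{Fac}(M(w))$ applied to $0=X_0\subsetneq\cdots\subsetneq X_{n-1}$ yields $X_{n-1}\in\text{Fac}(M(w))$. So it suffices to establish the key claim: the only submodules of $M(w)$ lying in $\text{Fac}(M(w))$ are $0$ and $M(w)$.

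To prove the claim I decompose any such submodule as $\bigoplus_j N_j$, with each $N_j=M(u_j)$ an indecomposable submodule of $M(w)$ that lies in $\text{Fac}(M(w))$ by additivity. Using the standard description of indecomposable submodules and quotients of a string module in terms of boundary-arrow orientations of substrings (as underlies the proofs of Lemmas~\ref{facmw} and \ref{extclosed}), the substring $u_j$ of $w$ must have the boundary arrows between $u_j$ and $w\setminus u_j$ pointing inward (so that $N_j$ embeds into $M(w)$) and simultaneously outward (so that the indecomposable $N_j$ arises as a quotient of $M(w)$, which is equivalent to $N_j\in\text{Fac}(M(w))$ since the dimension vectors of indecomposables are bounded by $(1,\ldots,1)$, forcing any surjection $M(w)^k\twoheadrightarrow N_j$ to factor through a single copy of $M(w)$). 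These orientations are incompatible on any nontrivial boundary, forcing $u_j=w$ and $N_j=M(w)$. Since $M(w)$ contains a unique copy of itself as a summand, $X_{n-1}\in\{0,M(w)\}$, contradicting $0\subsetneq X_{n-1}\subsetneq M(w)$. The main obstacle is exactly this combinatorial boundary analysis, which has to be verified separately using the explicit module structure of cluster-tilted algebras of type $\mathbb{A}$ (Lemma~\ref{typeA}) and of the cyclic quiver case (Section~\ref{cyclic}); in both settings the argument closely parallels the orientation analysis in the proof of Lemma~\ref{facmw}(a).
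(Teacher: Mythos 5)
Your argument is essentially correct but follows a different route from the paper, most visibly in the direction ``$\text{Fac}(M(w))$ is join-irreducible.'' The paper never needs your key claim about submodules: it observes that if $\mathcal{T}_1,\mathcal{T}_2\subsetneq\text{Fac}(M(w))$ then neither contains $M(w)$, and then uses Corollary~\ref{joininBIC} together with Lemma~\ref{facmw}~$a)$ (there are no extensions between modules of $\text{Fac}(M(w))$ with indecomposable middle term) to conclude that the join is literally $\mathcal{T}_1\cup\mathcal{T}_2$, hence proper. Your route instead iterates Lemma~\ref{join} to write the join of all proper torsion subclasses as a $\mathcal{F}ilt$-category and reduces to the statement that $M(w)$ has no proper nonzero submodule in $\text{Fac}(M(w))$; this is more self-contained module-theoretically but forces you to prove that extra combinatorial claim, which the paper's use of the $\mathcal{BIC}(Q)$ machinery sidesteps. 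Your forward direction (join-irreducible $\Rightarrow$ $\text{Fac}$ of an indecomposable) is also a variant: the paper writes $\mathcal{T}=\text{Fac}(X)$ using $\text{tors}(\Lambda)=\text{f-tors}(\Lambda)$ and shows $\text{Fac}(X)=\bigvee_i\text{Fac}(M(w_i))$, whereas you use the unique lower cover; both are fine, and yours leans only on Lemma~\ref{facmw}~$b)$ and finiteness of the lattice.

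The one step whose justification does not hold up as written is the assertion that any surjection $M(w)^k\twoheadrightarrow N_j$ with $N_j$ indecomposable must factor through a single copy of $M(w)$ ``since the dimension vectors of indecomposables are bounded by $(1,\ldots,1)$.'' The dimension-vector bound alone does not give this: an indecomposable with $0/1$ dimension vector can perfectly well be generated by two proper submodules (e.g.\ $M(1\to 2\leftarrow 3)$ is the sum of $M(1\to 2)$ and $M(2\leftarrow 3)$), so one must rule out the possibility that several component maps have distinct proper images that jointly generate $N_j$. The correct fix is available in the paper: by Lemma~\ref{dimhom} (type $\mathbb{A}$) and Lemma~\ref{homQn} (cyclic case), $\dim_\Bbbk\Hom_\Lambda(M(w),N_j)\leq 1$, so all component maps of $M(w)^k\to N_j$ are scalar multiples of one map and share a single image, which must then be all of $N_j$. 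With that repair, and with the observation that the $0/1$ dimension vectors force every submodule of $M(w)$ to be spanned by a subset of the standard string basis (so that the substring realizing $N_j$ as a submodule and as a quotient is the same, using Lemma~\ref{intcomponents} in type $\mathbb{A}$), your boundary-orientation contradiction goes through and the proof is complete.
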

\begin{proof}
Suppose $\mathcal{T} = \text{Fac}(M(w))$ for some $M(w) \in \text{ind}(\Lambda\text{-mod}).$  Let $\mathcal{T}_1, \mathcal{T}_2 \subsetneq \text{Fac}(M(w))$ be torsion classes covered by $\text{Fac}(M(w))$.  This implies that $M(w) \not \in \mathcal{T}_1$ and $M(w) \not \in \mathcal{T}_2.$ Regarding $\mathcal{T}_1$ and $\mathcal{T}_2$ as elements of $\mathcal{BIC}(Q)$, we have that $\mathcal{T}_1 \vee \mathcal{T}_2 = \overline{\mathcal{T}_1 \cup \mathcal{T}_2}$ using Corollary~\ref{joininBIC}.  By Lemma~\ref{facmw} $a)$, every additively generated subcategory $\mathcal{A} \subset \text{Fac}(M(w))$ is weakly extension closed. We therefore have that $\mathcal{T}_1\vee \mathcal{T}_2 = \mathcal{T}_1 \cup \mathcal{T}_2.$ Thus $M(w) \not \in \mathcal{T}_1 \vee \mathcal{T}_2$ so $\mathcal{T}_1 \vee \mathcal{T}_2 \subsetneq \text{Fac}(M(w)),$ a contradiction.

Conversely, suppose that $\mathcal{T} \in \text{tors}(\Lambda)$ is join-irreducible. Since $\text{tors}(\Lambda) = \text{f-tors}(\Lambda)$, we have that $ \mathcal{T} = \text{Fac}(X)$ for some $X \in \Lambda\text{-mod}.$ Let $X = \oplus_{i = 1}^\ell M(w_i)^{a_i}$ for some positive integers $a_i \in \mathbb{N}.$ 

We claim that $\text{Fac}(X) = \bigvee_{i = 1}^\ell \text{Fac}(M(w_i)).$ Observe that for any $j \in [\ell]$, we have $M(w_j) \in \bigvee_{i=1}^\ell \text{Fac}(M(w_i)).$ Since $\bigvee_{i=1}^\ell \text{Fac}(M(w_i)) \in \text{tors}(\Lambda),$ it is additive and thus $X \in \bigvee_{i=1}^\ell \text{Fac}(M(w_i)).$ We conclude that $\text{Fac}(X) \subset \bigvee_{i=1}^\ell \text{Fac}(M(w_i)).$ On the other hand, $M(w_j) \in \text{Fac}(X)$ for any $j \in [\ell]$ since $\text{Fac}(X)$ is quotient closed so we have that $\text{Fac}(M(w_j)) \subset \text{Fac}(X)$ for any $j \in [\ell].$ Since $\text{Fac}(X) \in \text{tors}(\Lambda),$ we have that $\bigvee_{i=1}^\ell \text{Fac}(M(w_i)) \subset \text{Fac}(X).$

Since $\mathcal{T} = \bigvee_{i = 1}^\ell \text{Fac}(M(w_i))$ and $\mathcal{T}$ is join-irreducible, we know that $\mathcal{T} = \text{Fac}(M(w_j))$ for some $j \in [\ell]$. \end{proof}

\begin{theorem}\label{canonjoinrepn}
Let $\mathcal{T} \in \text{tors}(\Lambda).$  Let $M(w_1),\ldots,M(w_\ell)$ be a maximal collection of non-isomorphic indecomposables such that for all $i \in [\ell]$,
\begin{enumerate}
\item $M(w_i)$ is in $\Tcal$ and no proper nonzero submodule of $M(w_i)$ is in $\Tcal$, and
\item if $M(w_i)$ is a proper quotient of $M(w)\in\Tcal$ then $M(w)$ has a proper submodule $M(u)$ in $\Tcal$.
\end{enumerate}
Then $\Tcal=\bigvee_{i=1}^\ell \text{Fac}(M(w_i))$ is a canonical join-representation of $\Tcal$.
\end{theorem}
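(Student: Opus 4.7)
The plan is to verify that the given expression satisfies the three defining properties of a canonical join representation in the finite semidistributive lattice $\text{tors}(\Lambda)$ (Theorem~\ref{semidis}): (a) the join equals $\Tcal$; (b) the representation is irredundant; and (c) for any other irredundant join representation $\Tcal=\bigvee_k\text{Fac}(M(v_k))$ by join-irreducibles, every $\text{Fac}(M(w_i))$ is dominated by some $\text{Fac}(M(v_k))$. Combined with the preceding lemma identifying the join-irreducibles of $\text{tors}(\Lambda)$ with the $\text{Fac}(M(w))$, this pins down the expression as the canonical join representation.

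For (a), the inclusion $\bigvee_i\text{Fac}(M(w_i))\subseteq\Tcal$ is immediate from condition (1). I would establish the reverse by induction on $\dim_\Bbbk M(w)$ for indecomposable $M(w)\in\Tcal$. The key base-type case is when $M(w)$ has no proper submodule in $\Tcal$; here I would argue that $M(w)$ itself must appear in the list $\{M(w_1),\ldots,M(w_\ell)\}$, so $M(w)\in\text{Fac}(M(w_i))$ for some $i$. Otherwise, one has a proper submodule $M(u)\in\Tcal$, and the short exact sequence $0\to M(u)\to M(w)\to M(w)/M(u)\to 0$ has strictly smaller-dimensional endpoints, both in $\Tcal$ (the quotient by quotient closure of $\Tcal$); both then lie in $\bigvee_i\text{Fac}(M(w_i))$ by induction, and the extension closure of this torsion class places $M(w)$ in it as well.

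For (b), suppose some $\text{Fac}(M(w_{i_0}))$ were redundant, so that $M(w_{i_0})\in\bigvee_{j\neq i_0}\text{Fac}(M(w_j))=\mathcal{F}ilt\bigl(\bigcup_{j\neq i_0}\text{Fac}(M(w_j))\bigr)$ by Lemma~\ref{join}. The bottom piece of any such filtration is a nonzero submodule of $M(w_{i_0})$ lying in $\Tcal$, so condition (1) forces it to equal $M(w_{i_0})$, and hence $M(w_{i_0})\in\text{Fac}(M(w_j))$ for some $j\neq i_0$. But then condition (2) applied to $M(w_j)$ demands that $M(w_j)$ has a proper submodule in $\Tcal$, contradicting condition (1) for $M(w_j)$.

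For (c), the same filtration argument applies verbatim: for each $i$ one has $M(w_i)\in\Tcal=\mathcal{F}ilt\bigl(\bigcup_k\text{Fac}(M(v_k))\bigr)$, and the bottom piece of a filtration is a submodule of $M(w_i)$ in $\Tcal$, hence equal to $M(w_i)$ by (1), so $M(w_i)\in\text{Fac}(M(v_k))$ for some $k$, giving $\text{Fac}(M(w_i))\subseteq\text{Fac}(M(v_k))$. The main obstacle is the step inside (a) claiming that every $M(w)\in\Tcal$ which is minimal for the submodule relation inside $\Tcal$ is on the list; this requires checking condition (2) for such $M(w)$, for which I would use the classification of extensions of string modules (Lemma~\ref{extq_n} in the cyclic case, and its type $\mathbb{A}$ analogue via \cite{cs14}) to show that any proper surjection $M(v)\twoheadrightarrow M(w)$ with $M(v)\in\Tcal$ forces $M(v)$ to contain a proper submodule lying in $\Tcal$.
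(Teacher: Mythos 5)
Your parts (b) and (c) are correct, and they in fact take a different route from the paper: where the paper proves canonicality by showing that $\Tcal\setminus M(w_i)$ is a biclosed subcategory (a delicate string-module argument) and then arguing that a join representation omitting $M(w_i)$ would land inside it, you instead take the bottom term of a filtration furnished by Lemma~\ref{join}, observe it is a nonzero submodule of $M(w_i)$ lying in $\Tcal$, and use condition (1) to force it to equal $M(w_i)$; this yields both irredundancy and the domination property quite directly. The genuine gap is in (a). Your base case asserts that every indecomposable $M(w)\in\Tcal$ with no proper submodule in $\Tcal$ appears on the list, and your proposed repair is to show that condition (1) implies condition (2), i.e.\ that any proper surjection $M(v)\twoheadrightarrow M(w)$ with $M(v)\in\Tcal$ forces $M(v)$ to have a proper submodule in $\Tcal$. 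This is false, so no appeal to the extension classification can establish it. Take $Q=1\to 2$ (a type $\mathbb{A}$ quiver with $\Lambda=\Bbbk Q$), $P_1=M(1\to 2)$, $S_1=M(1)$, $S_2=M(2)$, and $\Tcal=\text{Fac}(P_1)=\text{add}(P_1\oplus S_1)$. Then $S_1$ is simple, so it satisfies (1); but $P_1\twoheadrightarrow S_1$ is a proper surjection with $P_1\in\Tcal$, and the only proper nonzero submodule of $P_1$ is $S_2\notin\Tcal$. Hence (2) fails for $S_1$, which is therefore not on the list (the list here is $\{P_1\}$, as it must be, since $\text{Fac}(P_1)$ is join-irreducible). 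Note also that if (1) did imply (2), hypothesis (2) of the theorem would be vacuous.

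The equality in (a) is nevertheless true, and the fix is the paper's detour: given an indecomposable $M(w)\in\Tcal$ with no proper submodule in $\Tcal$, consider all $M(w^{\pr})\in\Tcal$ with $M(w)\in\text{Fac}(M(w^{\pr}))$ and with no proper submodule in $\Tcal$ ($M(w)$ itself is such a module), and choose one of maximal dimension. If $M(v)\in\Tcal$ surjects properly onto $M(w^{\pr})$, then $M(w)\in\text{Fac}(M(v))$ as well, so by maximality $M(v)$ cannot satisfy (1), i.e.\ it has a proper submodule in $\Tcal$; thus $M(w^{\pr})$ satisfies (2) and equals some $M(w_i)$, and then $M(w)\in\text{Fac}(M(w_i))\subseteq\bigvee_{i}\text{Fac}(M(w_i))$, which is all your induction needs (in the example above, $S_1\in\text{Fac}(P_1)$ even though $S_1$ is not on the list). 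With the base case repaired in this way, the rest of your argument goes through.
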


\begin{proof}
We first prove that the equality $\Tcal=\bigvee_{i=1}^\ell \text{Fac}(M(w_i))$ holds.  Since $\text{Fac}(M(w_i))\subseteq\Tcal$ for all $i$, it is clear that $\Tcal$ contains $\bigvee_{i=1}^\ell\text{Fac}(M(w_i))$.  Suppose this containment is proper, and let $M(w)\in\Tcal$ be an indecomposable of minimum dimension such that $M(w)\notin\bigvee_{i=1}^\ell\text{Fac}(M(w_i))$.  Suppose first that $M(w)$ contains no proper nonzero submodule in $\Tcal$.  Then there must exist some $M(w^{\pr})\in\Tcal$ such that $M(w)$ is a proper quotient of $M(w^\prime)$ but $M(w^{\pr})$ has no proper submodule in $\Tcal$.  Choosing such an $M(w^{\pr})$ of maximal dimension, we have $w^{\pr}=w_i$ for some $i$ and $M(w)\in\text{Fac}(M(w_i))$, contrary to the assumption that $M(w)\notin\bigvee_{i=1}^\ell\text{Fac}(M(w_i))$.  Hence, $M(w)$ contains a proper nonzero submodule $M$ such that $M\in\Tcal$.  Since $\Tcal$ is quotient closed, $M(w)/M\in\Tcal$.  But $M(w)/M$ decomposes into a direct sum of indecomposables, each of smaller dimension than $M(w)$.  By the minimality hypothesis, $M(w)/M\in\bigvee_{i=1}^\ell\text{Fac}(M(w_i))$.  As $\bigvee_{i=1}^\ell\text{Fac}(M(w_i))$ is extension closed, it must contain $M(w)$, contrary to our assumption.

Next, we show that $\Tcal\setminus M(w_i)$ is in $\mathcal{BIC}(Q)$.  It is clear by (1) that $\Tcal\setminus M(w_i)$ is weakly extension closed.  Assume that it is not co-closed.  Let $M(w)$ be an indecomposable in $\Tcal\setminus M(w_i)$ of minimum dimension such that there exists an extension $0\ra M(u)\ra M(w)\ra M(w^{\pr})\ra 0$ for which $M(u)$ and $M(w^{\pr})$ are not in $\Tcal\setminus M(w_i)$.  Since $\Tcal$ is quotient closed, we deduce $M(w^{\pr})=M(w_i)$.  By (2), there exists some $M(u^{\pr})\in\Tcal$ such that $M(u^{\pr})$ is a proper nonzero submodule of $M(w)$.  By (1) and since $\mathcal{T}$ is quotient closed, the composition $M(u^{\pr})\ra M(w)\ra M(w_i)$ must be 0.  Hence, there is an inclusion $M(u^{\pr})\ra M(u)$, which gives an exact sequence of the form
$$0\ra M(u)/M(u^{\pr})\ra M(w)/M(u^{\pr})\ra M(w_i)\ra 0.$$
Since $\Tcal$ is extension closed and by Lemma~\ref{facmw} $a)$, $M(u)/M(u^{\pr})$ is not in $\Tcal$.  But, since $\Tcal$ is quotient closed, $M(w)/M(u^{\pr})$ is in $\Tcal$.

If $M(w)/M(u^{\pr})$ is an indecomposable, then so is $M(u)/M(u^{\pr})$, and we obtain a contradiction to the minimality of $M(w)$.  Otherwise, $M(w)/M(u^{\pr})$ is a direct sum of two string modules $M(v)\oplus M(v^{\pr})$.  In this case, $w_i$ must be a substring of one of these strings, so we may assume $M(v)\ra M(w_i)$ is a quotient map.  Since there is an extension of the form $0\ra M(u)\ra M(w)\ra M(w_i)\ra 0$, the string $v$ is of the form $u^{\prime\prime} \leftarrow w_i$ for two strings $w_i$ and $u^{\pr\pr}$ where $0\ra M(u^{\pr\pr})\ra M(v)\ra M(w_i)\ra 0$ is exact.  But this implies $M(u)/M(u^{\pr})\cong M(u^{\pr\pr})\oplus M(v^{\pr})$, so $M(u^{\pr\pr})\notin\Tcal$ while $M(v)\in\Tcal$.  Again, this contradicts the minimality of $M(w)$.  Hence, we conclude that $\Tcal\setminus M(w_i)$ is in $\mathcal{BIC}(Q)$.

Now suppose $\Tcal=\bigvee_{j=1}^m\text{Fac}(M(w_j^{\pr}))$ is some other join-representation of $\Tcal$.  For a given $i\in[\ell]$, if none of the factors $\text{Fac}(M(w_j^{\pr}))$ contains $M(w_i)$, then $\bigvee_{j=1}^m\text{Fac}(M(w_j^{\pr}))\subseteq\Tcal\setminus M(w_i)$, in contradiction with our assumption.  Hence, for all $i\in[\ell]$, there exists $j\in[m]$ such that $\text{Fac}(M(w_i))\subseteq\text{Fac}(M(w_j^{\pr}))$.  This means that our join-representation $\Tcal=\bigvee_{i=1}^\ell\text{Fac}(M(w_i))$ is canonical.
\end{proof}

Dually, every torsion class has a canonical meet-representation.

\begin{corollary}\label{canonmeetrepn}
Let $\Tcal\in\text{tors}(\Lambda)$.  Let $M(w_1),\ldots,M(w_\ell)$ be $\Lambda$-modules such that $D(\Tcal^{\perp})=\bigvee_{j=1}^\ell \text{Fac}(DM(w_i))$ is a canonical join-representation of $D(\Tcal^{\perp})$.  Then $\Tcal=\bigwedge_{i=1}^\ell{}^{\perp}\text{Sub}(M(w_i))$ is a canonical meet-representation of $\Tcal$.
\end{corollary}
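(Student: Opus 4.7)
My proof plan is as follows. The key observation is Lemma~\ref{standarddual}, which provides an anti-isomorphism of posets $\alpha := D((-)^{\perp}): \text{tors}(\Lambda) \to \text{tors}(\Lambda^{\op}) \cong \text{tors}(\Lambda)^{\op}$. Because an anti-isomorphism exchanges joins and meets, and because Lemma~\ref{canonjoincanonmeet} asserts that canonical join-representations in a lattice $L$ correspond bijectively to canonical meet-representations in the dual lattice $L^{\op}$, $\alpha$ transforms canonical meet-representations in $\text{tors}(\Lambda)$ into canonical join-representations in $\text{tors}(\Lambda^{\op})$, and $\alpha^{-1}$ does the reverse. So the strategy is to pull back the hypothesized canonical join-representation of $D(\Tcal^{\perp})$ under $\alpha$.

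Applying this to the hypothesis $\alpha(\Tcal) = D(\Tcal^{\perp}) = \bigvee_{i=1}^\ell \text{Fac}(DM(w_i))$, I would take preimages term by term to obtain a canonical meet-representation
\[
\Tcal = \bigwedge_{i=1}^\ell \alpha^{-1}\bigl(\text{Fac}(DM(w_i))\bigr)
\]
in $\text{tors}(\Lambda)$. It remains to compute these preimages explicitly and identify them with ${}^{\perp}\text{Sub}(M(w_i))$.

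The main computation is the following. Set $\mathcal{U}_i := \alpha^{-1}(\text{Fac}(DM(w_i)))$; by definition of $\alpha$ and the fact that $D \circ D = \mathrm{id}$ on modules, this is equivalent to requiring $\mathcal{U}_i^{\perp} = D(\text{Fac}(DM(w_i)))$. Now I claim $D(\text{Fac}(DM(w_i))) = \text{Sub}(M(w_i))$: indeed, if $N \in \text{Fac}(DM(w_i))$ then there is a surjection $DM(w_i)^m \twoheadrightarrow N$, and applying $D$ produces an injection $DN \hookrightarrow M(w_i)^m$, so $DN \in \text{Sub}(M(w_i))$; the reverse containment is symmetric. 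Hence $\mathcal{U}_i^{\perp} = \text{Sub}(M(w_i))$, which by Proposition~\ref{torsbij} gives $\mathcal{U}_i = {}^{\perp}\text{Sub}(M(w_i))$, completing the proof.

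I do not anticipate a substantial obstacle; the only point requiring care is the bookkeeping of how the standard duality interacts with the operators $\text{Fac}$, $\text{Sub}$, $(-)^{\perp}$, and ${}^{\perp}(-)$, which comes down to the elementary observation that $D$ interchanges surjections and injections.
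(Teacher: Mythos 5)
Your proposal is correct and follows essentially the same route as the paper: both rest on the anti-isomorphism $D((-)^{\perp})$ of Lemma~\ref{standarddual}, the transfer of canonicity via Lemma~\ref{canonjoincanonmeet}, the identity $D(\text{Fac}(DM(w_i)))=\text{Sub}(M(w_i))$ coming from $D$ exchanging surjections and injections, and Proposition~\ref{torsbij}. The only difference is organizational: you pull back each join-irreducible term $\text{Fac}(DM(w_i))$ individually through the anti-isomorphism, whereas the paper verifies the equality $\Tcal=\bigwedge_{i=1}^\ell{}^{\perp}\text{Sub}(M(w_i))$ by one chain of identities using the explicit meet/join formulas of Proposition~\ref{meetandjointors}, and then invokes the same duality lemmas for canonicity.
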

\begin{proof}
We first show that $\mathcal{T} = \bigwedge_{i=1}^\ell{}^{\perp}\text{Sub}(M(w_i)).$ Observe that $$\begin{array}{rclll}
\mathcal{T} & = & {}^\perp(D(D(\mathcal{T}^\perp))) \\
& = & {}^\perp\left(D\left(\bigvee_{j=1}^\ell \text{Fac}(DM(w_i))\right)\right) \\
& = & {}^\perp\left(D\left(\bigvee_{j=1}^\ell D\text{Sub}(M(w_i))\right)\right) \\
& = & {}^\perp\left(\left(\bigvee_{j=1}^\ell DD\text{Sub}(M(w_i))\right)\right) \\
& = & {}^\perp\left(\left(\bigcap_{j=1}^\ell {}^\perp\text{Sub}(M(w_i))\right)^\perp\right) & (\text{by Propostion~\ref{meetandjointors}} \ b) \text{ }) \\
& = & \bigwedge_{j = 1}^\ell{}^\perp\text{Sub}(M(w_i)) & (\text{by Propositions~\ref{torsbij} and \ref{meetandjointors}} \ a) \text{ }).
\end{array}$$ Since the functor $D((-)^\perp): \text{tors}(\Lambda) \to \text{tors}(\Lambda^{\op})$ is an anti-isomorphism by Lemma~\ref{standarddual} and $\bigvee_{j=1}^\ell \text{Fac}(DM(w_i))$ is a canonical join-representation of $D(\mathcal{T}^\perp)$, we have by Lemma~\ref{canonjoincanonmeet} that $\bigwedge_{i=1}^\ell{}^{\perp}\text{Sub}(M(w_i))$ is a canonical meet-representation of $\Tcal.$
\end{proof}

\section{Some additional lemmas}\label{additionallemmas}

In this section, unless otherwise stated, we let $Q$ be a type $\mathbb{A}$ quiver and let $\Lambda = \Bbbk Q/{I}$ denote the cluster-tilted algebra corresonding to $Q$.

\begin{lemma}\label{intcomponents}
Let $M(u), M(v) \in \text{ind}(\Lambda\text{-mod})$ with $\text{supp}(M(u)) \cap \text{supp}(M(v)) \neq \emptyset$. Then there is a unique string $w = x_1 \leftrightarrow x_2 \cdots x_{k-1}\leftrightarrow x_k$ in $\Lambda$ such that $\text{supp}(M(u)) \cap \text{supp}(M(v)) = \{x_i\}_{i \in [k]}.$
\end{lemma}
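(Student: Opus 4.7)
The plan is to exploit two structural facts in the type $\mathbb{A}$ setting: every string module has dimension vector with entries in $\{0,1\}$ by the discussion in Section~\ref{clusttilt}, so every string is a walk through pairwise distinct vertices; and by Lemma~\ref{BV}~i) every simple cycle in the underlying graph of $Q$ is an oriented $3$-cycle.

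The key preliminary I would record is the following: for any string $w = y_0\leftrightarrow y_1\leftrightarrow\cdots\leftrightarrow y_p$ in $\Lambda$, the induced subgraph $Q[\text{supp}(M(w))]$ coincides with the walk itself, and hence is a path. Indeed, a chord joining $y_i$ to $y_j$ with $|i-j|\geq 2$ would, together with the walk edges from $y_i$ to $y_j$, form a simple cycle of length $|i-j|+1$ in $Q$; by Lemma~\ref{BV}~i) this cycle must have length exactly $3$, so $|i-j|=2$. Then $\{y_i,y_{i+1},y_{i+2}\}$ is an oriented $3$-cycle, and the only direction in which the two walk edges $y_i\leftrightarrow y_{i+1}\leftrightarrow y_{i+2}$ can be traversed makes them a composable $2$-path inside this $3$-cycle. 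By Lemma~\ref{typeA} this $2$-path lies in $I$, contradicting that $w$ is a string.

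For existence, write $u = u_0\leftrightarrow\cdots\leftrightarrow u_p$, $v = v_0\leftrightarrow\cdots\leftrightarrow v_q$, and set $I_u:=\{i:u_i\in S\}$ where $S := \text{supp}(M(u))\cap\text{supp}(M(v))$. It is enough to show that $I_u$ is an interval $\{i,\ldots,j\}$, for then $w := u_i\leftrightarrow\cdots\leftrightarrow u_j$ is a substring of $u$, hence a string in $\Lambda$, with $\text{supp}(M(w))=S$. Suppose not: choose $u_i,u_{i+k}\in S$ with $k\geq 2$ and $u_{i+1},\ldots,u_{i+k-1}\notin S$, and write $u_i=v_a$, $u_{i+k}=v_b$. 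Since $u_i$ and $u_{i+k}$ are non-adjacent in the path $Q[\text{supp}(M(u))]$, they are non-adjacent in $Q$ by the preliminary observation, forcing $|b-a|\geq 2$. Concatenating the subwalks $u_i\leftrightarrow\cdots\leftrightarrow u_{i+k}$ and $v_b\leftrightarrow\cdots\leftrightarrow v_a$ produces a closed walk in the underlying graph of $Q$ of length $k+|b-a|\geq 4$; its intermediate vertices on the $u$-side lie outside $\text{supp}(M(v))$ while those on the $v$-side lie in it, so all vertices visited are distinct. This exhibits a simple cycle in $Q$ of length at least $4$, contradicting Lemma~\ref{BV}~i).

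For uniqueness, any string $w'$ with $\text{supp}(M(w'))=S$ is a Hamiltonian walk on $Q[S]$. Since $S$ is a contiguous interval along the walk $u$, the graph $Q[S]$ is a subpath of the path $Q[\text{supp}(M(u))]$ and so is itself a path, whose Hamiltonian walk is unique up to reversal; hence $w'=w$. The main obstacle I anticipate is the closed-walk step in existence, where one must verify that the cycle assembled from the two subwalks is genuinely simple and of length at least four; this in turn depends on the preliminary observation that the support of every string induces a chordless path in $Q$.
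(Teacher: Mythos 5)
Your proof is correct, and it rests on the same two ingredients as the paper's: the Buan--Vatne classification (Lemma~\ref{BV}~i)) and the description of the relations (Lemma~\ref{typeA}). The difference is in the pivot statement. The paper observes that a string in $\Lambda$ meets any $3$-cycle in at most two vertices, deduces that every string (and every substring between two of its vertices) is the \emph{shortest path} in the underlying graph between its endpoints, and then asserts the conclusion; the final step from the geodesic property to ``the intersection of supports is the vertex set of a single string'' is left implicit. You instead prove that the support of a string induces a \emph{chordless path} in $Q$ (a chord would force an oriented $3$-cycle whose two consecutive arrows give a forbidden substring lying in $I$), and then complete the argument explicitly: a gap in $\text{supp}(M(u))\cap\text{supp}(M(v))$ along $u$ would assemble, from a $u$-subwalk avoiding $\text{supp}(M(v))$ and a $v$-subwalk inside it, a simple cycle of length at least $4$, contradicting Lemma~\ref{BV}~i); uniqueness then follows because the intersection induces a path graph, whose Hamiltonian walk is unique up to reversal (matching the convention that strings are taken up to inverses). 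So your route is essentially a fleshed-out variant of the paper's: the induced-path formulation replaces the shortest-path formulation, and your gap/closed-walk and Hamiltonian-path steps supply exactly the details the paper's terse ``Therefore'' omits. Only two small points deserve a word in a final write-up: that a substring of a string is again a string (so your $w$ is indeed a string in $\Lambda$), and that arrows between adjacent vertices are unique in a type $\mathbb{A}$ quiver, so the vertex sequence determines $w$.
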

\begin{proof}
This lemma is a consequence of \cite[Lemma 3.3]{ccs1} and \cite[Theorem 4.4]{ccs1}.
\end{proof}

\begin{lemma}\label{injsurj1dim}
{Let $M(u),M(v) \in \text{ind}(\Lambda\text{-mod})$. If $M(u) \hookrightarrow M(v)$ or $M(u) \twoheadrightarrow M(v)$, then $$\dim_\Bbbk\Hom_{\Lambda}(M(u), M(v)) = 1.$$}
\end{lemma}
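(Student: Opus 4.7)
The plan is to exploit the combinatorial rigidity of string modules in type $\mathbb{A}$ cluster-tilted algebras. Three key facts will be used throughout: every indecomposable is a string module $M(w)$ with $\dim_\Bbbk V_i\leq 1$ at each vertex $i$; each string visits every vertex of $Q$ at most once; and, crucially, if $i,j\in\supp(M(w))$ are joined by an arrow of $Q$, then that arrow is an arrow of the string $w$. This third point is the only nontrivial ingredient: a detour from $i$ to $j$ through any other route in $Q$ would, by Lemma~\ref{BV}, have to pass through a $3$-cycle, and that detour would then contain two consecutive arrows of the $3$-cycle, which is a forbidden $2$-path by Lemma~\ref{typeA}.

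With these facts in hand, a homomorphism $\phi:M(u)\to M(v)$ is encoded by scalars $c_i\in\Bbbk$ with $\phi_i=c_i\cdot\mathrm{id}$ on $\supp(M(u))\cap\supp(M(v))$ and $\phi_i=0$ elsewhere. The commutativity condition at each arrow $\alpha\in Q_1$ reduces to the scalar equation $c_{t(\alpha)}\,\varphi^u_\alpha=\varphi^v_\alpha\,c_{s(\alpha)}$, which I would analyze by cases: when $\alpha$ lies in both strings $u$ and $v$, it forces $c_{s(\alpha)}=c_{t(\alpha)}$; when $\alpha$ lies in exactly one of $u,v$ with both endpoints in the overlap, it forces the relevant $c_i$ to be $0$; and when $\alpha$ has only one endpoint in the overlap, the equation is trivially $0=0$.

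Now assume $M(u)\hookrightarrow M(v)$, so $\supp(M(u))\subseteq\supp(M(v))$. The existence of a nonzero inclusion forces every arrow of $u$ to also be an arrow of $v$, so within $\supp(M(u))$ the only active constraint is $c_{s(\alpha)}=c_{t(\alpha)}$ for each arrow of $u$. The third fact of paragraph one ensures that no extra ``mixed'' constraint arises from arrows of $Q$ inside $\supp(M(u))$ that would lie in $v$ but not in $u$, since such arrows cannot exist. Because the underlying graph of $u$ is connected, these equalities propagate and force all of the $c_i$ to coincide, so $\phi$ must be a scalar multiple of the given inclusion. The surjection case is handled symmetrically: from $M(u)\twoheadrightarrow M(v)$ one deduces that every arrow of $v$ is also an arrow of $u$, and the same connectedness argument applied to $v$ shows that $\phi$ is a scalar multiple of the given surjection.

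The main obstacle is really the ``no re-routing'' observation from paragraph one; once that combinatorial rigidity is in place, the remaining work is a bookkeeping exercise in linear algebra indexed by arrows of $Q$. Without this rigidity, one would have to worry about additional basis elements of $\Hom_\Lambda(M(u),M(v))$ coming from proper common substrings of $u$ and $v$ (as in the Crawley--Boevey description of homomorphisms between string modules), which could in principle push $\dim_\Bbbk\Hom_\Lambda(M(u),M(v))$ above $1$.
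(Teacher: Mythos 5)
Your argument is correct and follows essentially the same route as the paper: a homomorphism between these multiplicity-free string modules is a choice of scalars on the common support, and the commutation relations along the arrows of the common substring (arrows of $u$ in the injective case, of $v$ in the surjective case) force all these scalars to be equal, so the Hom-space is at most one-dimensional and the given injection or surjection makes it exactly one-dimensional; your ``no re-routing'' observation plays the role of the paper's Lemma~\ref{intcomponents}. One small imprecision: in general an arrow with exactly one endpoint in the overlap and lying on exactly one of the two strings can impose a genuine vanishing condition rather than being ``trivially $0=0$,'' but in the submodule/quotient configurations at hand such arrows are oriented so that no condition arises (and in any case extra vanishing conditions could only contradict the assumed nonzero map), so the conclusion is unaffected.
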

\begin{proof}
This lemma is a consequence of \cite[Lemma 3.4]{ccs1}.
\end{proof}

\begin{lemma}\label{dimhom}
Let $M(u),M(v) \in \text{ind}(\Lambda\text{-mod}).$ Then $\dim_\Bbbk\Hom_\Lambda(M(u),M(v)) \le 1.$  Additionally, assume $M(u)$ is not a submodule of $M(v)$ and $M(u)$ does not surject onto $M(v)$, but that $\text{Hom}_{\Lambda_T}(M(u),M(v)) \neq 0$. Then there exists a string $w$ in $\Lambda_T$ distinct from both $u$ and $v$ such that $M(u) \twoheadrightarrow M(w) \hookrightarrow M(v).$
\end{lemma}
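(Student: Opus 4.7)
Since every indecomposable over the type $\mathbb{A}$ cluster-tilted algebra $\Lambda$ is a string module with $\dim_\Bbbk V_i\leq 1$ at each vertex, a morphism $\theta:M(u)\to M(v)$ is encoded simply by scalars $\theta_i\in\Bbbk$, supported on $\text{supp}(M(u))\cap\text{supp}(M(v))$. The plan is to use Lemma~\ref{intcomponents} to identify this intersection as the vertex set of a unique string $w^\ast=x_1\leftrightarrow\cdots\leftrightarrow x_k$ in $\Lambda$, then analyze the naturality constraints on the scalars $\theta_{x_j}$.

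I would first show $\dim_\Bbbk\Hom_\Lambda(M(u),M(v))\leq 1$. Each arrow $\beta_j$ of $w^\ast$ appears as an arrow of both strings $u$ and $v$ (where it acts by the identity), so the naturality square reads $\theta_{t(\beta_j)}=\theta_{s(\beta_j)}$. This forces the scalars $\theta_{x_1},\ldots,\theta_{x_k}$ to be equal. An arrow of $u$ or $v$ incident to an endpoint of $w^\ast$ but not lying in the other string can only impose the further constraint that this common scalar is zero (because one side of the naturality square is automatically zero). Hence $\theta$ is determined by a single scalar, yielding $\dim_\Bbbk\Hom_\Lambda(M(u),M(v))\leq 1$.

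For the factorization, assume the hypotheses of the second statement and take a nonzero $\theta$, whose common scalar $\lambda\neq 0$ on $w^\ast$. Then $\text{Im}(\theta)$ is the submodule of $M(v)$ supported exactly on the vertices of $w^\ast$, which is the string module $M(w^\ast)$, so $\theta$ factors as $M(u)\twoheadrightarrow M(w^\ast)\hookrightarrow M(v)$. Setting $w=w^\ast$, if $w=u$ the surjection would be an isomorphism, giving $M(u)\hookrightarrow M(v)$, against hypothesis; by the dual argument $w\neq v$. The main obstacle is verifying that $\text{Im}(\theta)$ really coincides with $M(w^\ast)$ as a submodule of $M(v)$: one must check that no arrow $\alpha$ of $v$ with $s(\alpha)\in\{x_j\}$ and $t(\alpha)\notin\{x_j\}$ can act as the identity on $M(v)$, since such an arrow would propagate the image of $\theta$ beyond $w^\ast$. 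The maximality of $w^\ast$ as the intersection support from Lemma~\ref{intcomponents}, combined with the type $\mathbb{A}$ structure of $Q$ and the relations in $I$ from Lemma~\ref{typeA}, rules this out and yields the desired factorization.
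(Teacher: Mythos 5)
Your argument is correct and follows essentially the same route as the paper: both proofs rest on Lemma~\ref{intcomponents} to identify $\supp(M(u))\cap\supp(M(v))$ with the vertex set of a single string $w^\ast$, and your scalar/naturality analysis is a written-out version of the step the paper delegates to Lemma~\ref{injsurj1dim} together with the assertion that every map factors as $c\,\iota\pi$ through $M(w^\ast)$. One correction to your final paragraph, though: the ``main obstacle'' you flag is not actually ruled out by the maximality of $w^\ast$ or by the type $\mathbb{A}$ relations --- an arrow of $v$ acting as the identity, with source on $w^\ast$ and target outside $\supp(M(u))$, can perfectly well exist (it does whenever $v$ continues past an endpoint of $w^\ast$ with that orientation), and in that situation $\Hom_\Lambda(M(u),M(v))=0$. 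What excludes it in your setting is the standing hypothesis $\theta\neq 0$: by your own naturality computation such a boundary arrow forces the common scalar $\lambda$ to vanish. Equivalently, no separate verification is needed at all, since $\operatorname{Im}(\theta)$ is automatically a submodule of $M(v)$; it is supported exactly on the vertices of $w^\ast$ because $\theta_i=\lambda\neq 0$ there and $\theta_i=0$ elsewhere, and on that support the only arrows are the arrows of $w^\ast$ themselves (the geodesic property used in the proof of Lemma~\ref{intcomponents} excludes chords), acting by identities, so $\operatorname{Im}(\theta)\cong M(w^\ast)$. The structural facts you cite are instead what justify the earlier claim that each arrow of $w^\ast$ lies on both $u$ and $v$. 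With that repair, your conclusion that $w\neq u$, $w\neq v$, and $M(u)\twoheadrightarrow M(w)\hookrightarrow M(v)$ is fine.
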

\begin{proof}
We can assume that $\Hom_\Lambda(M(u),M(v)) \neq 0.$ Thus, by Lemma~\ref{intcomponents} there exists a unique string $$w = x_1 \stackrel{\alpha_1}{\longleftrightarrow} x_2 \stackrel{\alpha_2}{\longleftrightarrow} \cdots \stackrel{\alpha_{m}}{\longleftrightarrow} x_{m+1}$$
that is a substring of both $u$ and $v$ such that $\pi: M(u) \twoheadrightarrow M(w)$ and $ \iota: M(w) \hookrightarrow M(v)$. It is easy to see that any map $\theta:M(u) \to M(v) $ factors as $\theta = c\iota\pi$ where $c \in \Bbbk.$ Combining this with Lemma~\ref{injsurj1dim}, we have that $\dim_\Bbbk\Hom_\Lambda(M(u),M(v)) = 1.$
\end{proof}

\begin{lemma}\label{dimext}
Assume $Q$ is of type $\mathbb{A}$ or of the form $Q = Q(n)$ and let $\Lambda = \Bbbk Q/I$ denote the corresponding cluster-tilted algebra. Let $M(u),M(v) \in \text{ind}(\Lambda\text{-mod}).$ Then $\dim_\Bbbk\Ext^1_\Lambda(M(u),M(v)) \le 1.$ 
\end{lemma}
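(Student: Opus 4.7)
The plan is to reduce the bound on $\Ext^1$ to the $\Hom$-bound already supplied by Lemma~\ref{dimhom} via the Auslander-Reiten formula
$$\dim_\Bbbk \Ext^1_\Lambda(M(u), M(v)) = \dim_\Bbbk \underline{\Hom}_\Lambda(\tau^{-1}M(v), M(u)),$$
where $\underline{\Hom}_\Lambda(-,-)$ denotes morphisms modulo those factoring through a projective module. This formula is valid for any finite dimensional algebra.

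First I would invoke the fact that in both cases under consideration, $\Lambda$ is a representation-finite string algebra: in the type $\mathbb{A}$ case this is Lemma~\ref{typeA} together with the subsequent identification of $\Lambda$ as a string algebra in Section~\ref{clusttilt}, and in the cyclic case this is recorded at the start of Section~\ref{cyclic}. Consequently every indecomposable $\Lambda$-module is a string module, and since $\tau^{-1}$ sends an indecomposable non-injective module to an indecomposable non-projective module, the translate $\tau^{-1}M(v)$ is either zero (when $M(v)$ is injective) or an indecomposable string module, which we denote $M(v^\pr)$.

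In the trivial case $\tau^{-1}M(v)=0$ there is nothing to prove. Otherwise, since the canonical surjection $\Hom_\Lambda(M(v^\pr),M(u)) \twoheadrightarrow \underline{\Hom}_\Lambda(M(v^\pr),M(u))$ yields the inequality $\dim_\Bbbk \underline{\Hom}_\Lambda(M(v^\pr),M(u)) \le \dim_\Bbbk \Hom_\Lambda(M(v^\pr),M(u))$, Lemma~\ref{dimhom} produces
$$\dim_\Bbbk \Ext^1_\Lambda(M(u), M(v)) \;=\; \dim_\Bbbk \underline{\Hom}_\Lambda(M(v^\pr), M(u)) \;\le\; \dim_\Bbbk \Hom_\Lambda(M(v^\pr), M(u)) \;\le\; 1,$$
completing the argument.

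The only delicate point is to confirm that $\tau^{-1}M(v)$ really is zero or indecomposable, which is a standard property of the Auslander-Reiten translate but can in the $Q(n)$ case also be read off directly from the explicit formula $\tau^{-1}X(i,j) = X(i-1,j)$ recorded in Section~\ref{cyclic}. As an alternative route — closer in spirit to the basis-theoretic discussion promised for Section~\ref{aboutpiupanddown} — one could read the bound directly off the classification of nonsplit extensions given in Proposition~\ref{extclassif} for the cyclic case and the analogous basis of \cite{cs14} for the type $\mathbb{A}$ case; each of these exhibits a unique nonsplit extension of $M(u)$ by $M(v)$ up to equivalence, which immediately yields $\dim_\Bbbk \Ext^1_\Lambda(M(u), M(v)) \le 1$.
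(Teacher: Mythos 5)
Your main line of argument is exactly the paper's: apply the Auslander--Reiten formula, observe that $\tau^{-1}M(v)$ is either zero or indecomposable, and then bound $\dim_\Bbbk\underline{\Hom}_\Lambda(\tau^{-1}M(v),M(u))$ by the ordinary Hom space between indecomposable string modules. The gap is in how you justify that last bound in the cyclic case. Lemma~\ref{dimhom} is stated and proved only for $Q$ of type $\mathbb{A}$ (Section~\ref{additionallemmas} fixes that hypothesis), and its proof goes through Lemma~\ref{intcomponents}, which relies on the type $\mathbb{A}$ classification of Lemma~\ref{BV} to conclude that the supports of two strings meet in a single connected string. For $Q=Q(n)$ this connectivity fails: for instance with $n=4$ the strings supported on $\{1,2,3\}$ and on $\{3,4,1\}$ intersect in two components, so the factorization argument of Lemma~\ref{dimhom} does not carry over verbatim and you cannot simply quote it. The bound $\dim_\Bbbk\Hom_\Lambda(\tau^{-1}M(v),M(u))\le 1$ is still true in the cyclic case, but it needs the explicit description of morphisms between indecomposables over $\Bbbk Q(n)/I$ given in Lemma~\ref{homQn} (which is what the paper cites), or some equivalent direct computation; as written, your proof covers only the type $\mathbb{A}$ half of the statement.

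A second, smaller caution: your suggested alternative route through Proposition~\ref{extclassif} is circular in the cyclic case, because the assertion there that the nonsplit extension is unique up to equivalence is itself deduced from Lemma~\ref{dimext} in the paragraph immediately preceding that proposition. (Reading the bound off the basis of extensions in \cite{cs14} for type $\mathbb{A}$ is legitimate, but it is not available for $Q(n)$.) So the AR-formula route is the right one; it just needs Lemma~\ref{homQn} rather than Lemma~\ref{dimhom} to finish the $Q(n)$ case.
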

\begin{proof}
By the Auslander-Reiten Formula (see \cite{ass06}), we have that $$\begin{array}{cclcccccc}
\dim_{\Bbbk}\Ext^1_\Lambda(M(u),M(v)) & = & \dim_{\Bbbk}\underline{\Hom}_\Lambda(\tau^{-1}M(v),M(u))\\
& \le & \dim_\Bbbk\Hom_\Lambda(\tau^{-1}M(v),M(u)) \\
& \le & 1
\end{array}$$

\noindent where the last inequality follows from Lemma~\ref{dimhom} if $Q$ is of type $\mathbb{A}$ and from Lemma~\ref{homQn} if $Q = Q(n)$ and the fact that $\tau^{-1}M(v)$ is either zero or indecomposable.
\end{proof}

\bibliographystyle{plain}
\bibliography{bib_torsion.bib}

\end{document}